\documentclass[12pt, leqno]{article}
\usepackage{amsmath,amscd,amsthm,amssymb,amsxtra,latexsym,epsfig,epic,graphics}
\usepackage[matrix,arrow,curve]{xy}
\usepackage{graphicx}
\usepackage{diagrams}
\oddsidemargin.8cm
\evensidemargin1cm

%
\voffset1cm





\def\antiddot{\mathinner{\mkern1mu\raise1pt\vbox{\kern7pt\hbox{.}}\mkern2mu
        \raise4pt\hbox{.}\mkern2mu\raise7pt\hbox{.}\mkern1mu}}



\newcommand{\BB}{{\mathbb B}}
\newcommand{\CC}{{\mathbb C}}

\newcommand{\PP}{{\mathbb P}}
\newcommand{\QQ}{{\mathbb Q}}

\newcommand{\TT}{{\mathbb T}}

\newcommand{\coker}{{\rm{coker}\,}}

\newcommand{\s}{\mathcal}

\newcommand{\cO}{{\s O}}

\newcommand{\cL}{{\s L}}



\newcommand{\tensor}{\otimes}

\newcommand{\punkt}{\hspace{-.3ex}\raise.15ex\hbox to1ex{\Huge.}}

\def \fix#1 {{\hfill\break \bf (( #1 ))\hfill\break}}

\DeclareMathOperator{\reg}{reg}

\DeclareMathOperator{\rank}{rank}






\newtheorem{theorem}{Theorem}[section]
\newtheorem{lemma}[theorem]{Lemma}
\newtheorem{proposition}[theorem]{Proposition}
\newtheorem{corollary}[theorem]{Corollary}

\theoremstyle{definition}

\newtheorem{remark}[theorem]{Remark}

\newtheorem{example}[theorem]{Example}

\def\BB{{\mathbb B}}

\def\QQ{{\mathbb Q}}
\def\facet{{\bf facet}}
\def\image{{\rm image}}
\def\cE{{\cal E}}
\def\cF{{\cal F}}
\def\cG{{\cal G}}

\def\cHom{{{\cal H}om}}

 \def\bs{{Boij-S\"oderberg{} }}
\def\CC{{{\mathbb B}^*}}

\makeatletter
\def\Ddots{\mathinner{\mkern1mu\raise\p@
\vbox{\kern7\p@\hbox{.}}\mkern2mu
\raise4\p@\hbox{.}\mkern2mu\raise7\p@\hbox{.}\mkern1mu}}
\makeatother

\date{last revised July 12, 2008}
\title{Betti Numbers of Graded Modules and\\
Cohomology of  Vector Bundles}
\author{David Eisenbud and Frank-Olaf Schreyer\\ \\
Dedicated to Mark Green,whose work connecting\\
Algebraic Geometry and Free Resolutions\\ has inspired
us for a quarter of a century,\\ on the occasion of his Sixtieth Birthday}

\begin{document}

\maketitle

\begin{abstract}

In a remarkable paper
Mats Boij and Jonas S\"oderberg  \cite{BS}
conjectured that the Betti table of a Cohen-Macaulay 
module over a polynomial ring
is a positive linear combination of Betti tables of
modules with pure resolutions. We prove
 a strengthened 
form of their Conjectures. Applications include a proof of
the Multiplicity Conjecture of Huneke and Srinivasan and
a proof of the convexity of a fan naturally associated to the
Young lattice.

With the same tools we show that
the cohomology
table of any vector bundle on projective space is a positive rational
linear combination of the cohomology tables of
what we call \emph{supernatural}
vector bundles. Using this result we give new bounds on the 
slope of a vector bundle in terms of its cohomology.

\end{abstract}

\section*{Introduction}
 
 The \emph{Betti table} of a graded module $M$ over a graded ring $S$
 is numerical data consisting of the minimal number of generators
 in each degree required for each syzygy module of $M$. Similarly, the 
 \emph{cohomology table} of a coherent sheaf $\cF$ on a projective
 variety is  numerical data of the dimension of each cohomology group
 of each twist of the sheaf. In this paper we characterize
  the Betti tables of
 Cohen-Macaulay graded modules 
  and the cohomology tables of vector bundles on
 projective spaces, up to rational multiples. 
 
 In each case, the characterization is in terms of certain distinguished tables: the
 Betti tables of
 \emph{pure resolutions of Cohen-Macaulay modules} 
 on the one hand and the  cohomology
 tables of \emph{supernatural vector bundles}
  on the other. We also prove the existence of these special objects.
 They turn out to be the extremal rays in the appropriate
 convex rational cones of Betti tables and cohomology tables. 
 Though these cones are not dual to one another in 
 the usual sense, we use certain supernatural bundles to define the supporting
 hyperplanes of the cone of Betti tables, and certain pure resolutions to define the supporting hyperplanes of the cone of cohomology tables. 
 
After the first version of
this paper was posted, Boij and Soederberg \cite{BS2}
showed
that the functionals we define here actually 
cut out the rational cone generated
by all Betti tables---not just those of Cohen-Macaulay modules, 
as in this paper. Using this, they prove
strong versions of the Mulitplicity Conjecture for all modules.

 We next describe the results more precisely. Throughout this paper
 we will work with the polynomial ring $S=K[x_1,\dots,x_n]$ over a fixed,
 arbitrary field, and with the projective space $\PP^{n-1}:=\PP^{n-1}_K$
 over that field. To simplify notation we often write $m:=n-1$.
 All modules will be finitely generated graded $S$-modules with
 maps homogeneous of degree zero.
 All sheaves on $\PP^m$ will be coherent.
 
 \subsection*{Betti Tables}

Let $\BB$ be the rational vector space of \emph{rational Betti tables}: that is,
$\BB=\oplus_{-\infty}^\infty\QQ^{n+1}$, which we think of of as the space of 
tables of rational numbers
with $n+1$ columns and rows numbered by the integers.
To any length $n$ complex of finitely generated free $S$-modules
$$ 
F:\quad \oplus_j S(-j)^{\beta_{0,j}}\leftarrow \cdots \leftarrow \oplus_j S(-j)^{\beta_{i,j}}\leftarrow \cdots \leftarrow \oplus_jS(-j)^{\beta_{n,j}}\leftarrow 0
$$
we associate the \emph{Betti table\/} $\beta(F)\in\BB$ 
whose entry in the $i$-th column and $j$-th row is
$\beta_{i,i+j}$: 
\small
$$
\begin{matrix} 
j\backslash i&\vline&0   &  1    & \cdots & n    \cr\hline
\vdots&\vline&\vdots&\vdots & \cdots    &\vdots     \cr 
       0&\vline&\beta_{0,0}&\beta_{1,1}&\cdots&\beta_{n,n}\cr
       1&\vline&\beta_{0,1}&\beta_{1,2}&\cdots&\beta_{n,n+1}\cr
\vdots&\vline&\vdots&\vdots & \cdots    &\vdots     \cr 
\end{matrix}
$$         
\normalsize        
Finally, to any graded
$S$-module $M$ we associate the Betti table $\beta(M)$ of its
minimal free resolution. Note that
the direct sum of modules or resolutions corresponds to 
addition of Betti tables.

By a \emph{degree sequence} (of length $c$) we will mean a strictly increasing sequence of
integers $d=(d_0<\cdots<d_c)$. The resolution $F$ is called \emph{pure}, with
degree sequence $d$, if $\beta_{i,j}=0$ except when $j=d_i$. In this case
Herzog and K\"uhl \cite{HK} show that
$$
\beta_{i,d_i} = \lambda \prod_{j\neq i} \frac{1}{\mid d_j-d_i\mid} \quad\hbox{ for $0\leq i\leq c$}
$$
for some rational number $\lambda$.
The proof relies on the equations imposed on the $\beta_{i,j}(M)$ by the vanishing
of the first $c$ coefficients of the Hilbert polynomial of $M$, corresponding to the
fact that the support of $M$ has codimension $c$. We will call these the
\emph{Herzog-K\"uhl equations.}

The first of the \bs Conjectures is the existence
of Cohen-Macaulay modules with pure resolutions having
any given degree sequence. They prove this in the 
case $n=2$. It was proved for all $n$ in characteristic zero
 by Eisenbud-Fl\o ystad-Weyman \cite{EFW}. We prove it in general.
 
\begin{theorem}
 \label{existence0}
Every pure Betti table corresponding to a degree sequence
 of length $c\leq n$ is a rational multiple of the
Betti table of the minimal free resolution of a Cohen-Macaulay $S$-module.
\end{theorem}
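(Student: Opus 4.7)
The overall plan is first to reduce to the ``Artinian case'' in which $c = n$, and then to produce a characteristic-free construction of an Artinian module with prescribed pure resolution.

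\emph{Reduction to the Artinian case.} Suppose that for every strictly increasing sequence $d = (d_0 < \cdots < d_c)$ we can construct a graded Artinian module $\bar M$ over $\bar S := K[x_1,\dots,x_c]$ whose minimal free $\bar S$-resolution is pure with degree sequence $d$. Set $M := \bar M \otimes_K K[x_{c+1},\dots,x_n]$, regarded as an $S$-module in the obvious way. Since $K[x_{c+1},\dots,x_n]$ is a free (hence flat) $K$-module, the minimal free $S$-resolution of $M$ is obtained from that of $\bar M$ by base change, so it has the same (pure) Betti table. The module $M$ has Krull dimension $n-c$, is Cohen-Macaulay of codimension $c$, and so realizes the desired pure Betti table up to the rational multiple $\lambda$ predicted by the Herzog-K\"uhl equations. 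Thus it is enough to construct the required Artinian models, i.e.\ to handle the case $c = n$.

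\emph{Construction in the Artinian case.} For each sequence $d_0 < \cdots < d_n$ one must exhibit a graded Artinian $S$-module whose minimal free resolution is pure with shifts $d_i$. In characteristic zero this is done in \cite{EFW} by applying a Schur functor to the cokernel of a ``generic'' matrix; since Schur functors do not behave well in positive characteristic, a more hands-on construction is required. The approach I would try is to take $M$ to be a suitable subquotient of the coordinate ring of a (possibly non-reduced) configuration of linear subspaces, chosen so that the differentials in its resolution are forced by rigidity of the configuration to live in exactly one degree at each homological step. An alternative is to build the module by iterated mapping cones, starting from a base case whose resolution is constructed out of Koszul complexes on regular sequences of powers $x_1^{a_1},\dots,x_n^{a_n}$, and then successively modifying the shifts by surgery on pairs of adjacent homological degrees (linkage-style moves), until one reaches the prescribed degree sequence $d$.

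\emph{Main obstacle.} The principal difficulty is to verify that any such candidate construction actually yields a \emph{pure} resolution with exactly the specified shifts and no extraneous generators in intermediate homological degrees. In characteristic zero this purity is enforced automatically by $\GL$-equivariance, but in positive characteristic one must prove the required vanishings by direct means: either by choosing sufficiently rigid combinatorial data (monomial ideals or generic linear configurations) that force purity for structural reasons, or by a reduction-modulo-$p$ / semicontinuity argument that transports purity from the known characteristic-zero construction to an integral model and then specializes. Either route requires careful bookkeeping of the shifts and of the projective dimension, and is what I expect to be the technical heart of the argument.
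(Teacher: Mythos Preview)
Your reduction to the Artinian case $c=n$ is correct and is exactly what the paper does. After that, however, your proposal is not a proof but an outline of what a proof would have to accomplish, with two loosely described candidate constructions and an honest admission that you do not know how to establish purity for either one. Neither ``a suitable subquotient of the coordinate ring of a configuration of linear subspaces'' nor ``iterated mapping cones with linkage-style moves'' is specified precisely enough to be checked, and there is no known mechanism by which such constructions would force the vanishing of all the off-degree Betti numbers in arbitrary characteristic. The semicontinuity/reduction-mod-$p$ idea also does not work as stated: purity of a resolution is an \emph{open} condition on the Betti numbers only once the Hilbert function is fixed, and there is no reason the characteristic-zero module of \cite{EFW} lifts to a flat family over $\mathbb{Z}$ with Cohen--Macaulay fibers.

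The paper's actual construction is quite different and worth knowing. One works on a product $\PP^{n-1}\times\PP^{m_1}\times\cdots\times\PP^{m_n}$ with $m_i=d_i-d_{i-1}-1$, takes the Koszul complex on $d_n$ multilinear forms of multidegree $(1,\dots,1)$ with no common zero (an explicit such collection over any field is written down in Proposition~\ref{elementary forms}), twists by a carefully chosen line bundle, and then pushes forward to $\PP^{n-1}$ along the successive projections. A spectral-sequence argument (Proposition~\ref{pushdown construction}) shows that each projection kills a block of consecutive terms and leaves a shorter complex; after all projections one is left with a length-$n$ complex of sums of line bundles on $\PP^{n-1}$ with exactly the twists $d_0,\dots,d_n$. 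Taking global sections and invoking the Peskine--Szpiro acyclicity lemma finishes the argument. The point is that purity is built in from the start: the Koszul complex is already pure, and the pushforward step is engineered so that the surviving terms have exactly one twist each. This replaces the $\GL$-equivariance of \cite{EFW} by a geometric mechanism (K\"unneth vanishing on products of projective spaces) that is insensitive to the characteristic.
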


We give the \emph{termwise} partial
order to the set of sequences of a given length,
$$
(d_0<\cdots<d_c) \leq (d'_0<\cdots<d'_c) \quad \Leftrightarrow \quad d_i\leq d'_i	\hbox{ for all }i.
$$
A totally ordered subset of a partially ordered set is called a \emph{chain}. 
The other parts of the \bs Conjectures may be summarized 
as follows.

\begin{theorem}\label{main0}
The Betti table of any finitely
generated graded Cohen-Macaulay $S$-module of codimension $c$
can be expressed uniquely as a positive
rational linear combination of the Betti tables of codimension $c$ Cohen-Macaulay modules
with pure resolutions whose degree sequences form a chain.
\end{theorem}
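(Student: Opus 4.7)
The plan is a greedy decomposition algorithm: at each step strip off a positive multiple of a pure diagram with the currently smallest available degree sequence, and verify that the remainder is again the Betti table of a codimension $c$ Cohen-Macaulay module whose minimal degree sequence has strictly increased. Given $\beta=\beta(M)$ for $M$ Cohen-Macaulay of codimension $c$, I associate its \emph{top degree sequence} $d(\beta)=(d_0<\cdots<d_c)$ defined by $d_i=\min\{j:\beta_{i,j}\neq 0\}$. Minimality of the resolution forces $d_i<d_{i+1}$, and the codimension $c$ hypothesis guarantees that the sequence has exactly $c+1$ terms, so $d(\beta)$ is a bona fide degree sequence of length $c$.

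By Theorem~\ref{existence0}, there is a Cohen-Macaulay module whose Betti table $\pi(d(\beta))$ is a positive rational multiple of the pure diagram with degree sequence $d(\beta)$. The crux is to produce a rational $q>0$ such that
$$\beta' := \beta - q\,\pi(d(\beta))$$
has all entries nonnegative, satisfies the Herzog--K\"uhl equations appropriate to codimension $c$, and has top degree sequence $d(\beta')$ strictly greater than $d(\beta)$ in the termwise order. Granting this, iterating yields $\beta=\sum_k q_k\,\pi(d^{(k)})$ with $q_k>0$ and $d^{(0)}<d^{(1)}<\cdots$ a chain, and termination is guaranteed because each $d^{(k)}_i$ is bounded above by the corresponding data of the original resolution of $M$, and there are only finitely many degree sequences lying in such a box.

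The main obstacle is this greedy step. The natural choice is to take $q$ maximal subject to $\beta'\geq 0$ entrywise, and to argue that the binding constraint must occur at a top-strand position $(i,d_i)$, so that $d_i$ strictly increases. The danger is that an entry off the top strand could vanish first, leaving $d(\beta')=d(\beta)$ and stalling the algorithm. To rule this out I would invoke the positivity pairing between Betti tables and cohomology tables of supernatural bundles developed elsewhere in the paper: supernatural cohomology tables cut out the supporting hyperplanes of the cone generated by pure Betti tables, and each such functional, evaluated on $\beta - q\,\pi(d(\beta))$, must remain nonnegative. The first such functional that turns negative corresponds to a wall containing $\pi(d(\beta))$ and a neighboring pure diagram $\pi(d')$ with $d'>d(\beta)$, and this is precisely what forces the binding constraint onto the top strand.

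Finally, uniqueness is essentially forced by the algorithm. If $\beta=\sum_{k=1}^N q_k\,\pi(d^{(k)})$ with $q_k>0$ along a chain $d^{(1)}<\cdots<d^{(N)}$, then because each $\pi(d^{(k)})$ vanishes strictly above the top strand of the smallest element $d^{(1)}$, the top degree sequence of $\beta$ must equal $d^{(1)}$, and $q_1$ is forced as the unique positive rational making $d(\beta-q_1\pi(d^{(1)}))$ strictly larger than $d^{(1)}$. Induction on $N$ completes the argument.
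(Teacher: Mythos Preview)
Your ingredients are right and coincide with the paper's, but the organization inverts the logic and leaves a real gap at the key step. The greedy procedure you describe is exactly the Decomposition Algorithm the paper states in \S\ref{non-cancellation} --- as a \emph{consequence} of Theorem~\ref{main0}, not as its proof. You correctly isolate the hard point (the binding constraint must occur on the top strand) and the right tool (the positivity pairing with supernatural bundles). But the sentence ``each such functional, evaluated on $\beta - q\,\pi(d(\beta))$, must remain nonnegative'' is not justified: Theorem~\ref{modified} gives nonnegativity only for Betti tables of \emph{minimal free resolutions of modules}, and $\beta'=\beta-q\,\pi(d(\beta))$ is not a priori such a table. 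So you cannot iterate the positivity result on the remainder, and the inductive step stalls.

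The paper closes this gap by reversing the order. It uses the functionals $\langle -,E\rangle_{c,\tau}$, with $E$ built from a supernatural bundle, to show once and for all that $\beta(M)$ satisfies every exterior facet inequality of the \bs fan (Theorem~\ref{main}). Hence $\beta(M)$ lies in the fan, and since the fan is simplicial, $\beta(M)$ sits in a unique cone spanned by a chain of pure diagrams; the decomposition is immediate. Your greedy picture then becomes a corollary: subtracting the appropriate multiple of the vertex $\pi(d(\beta))$ pushes $\beta$ to the opposite facet of that simplicial cone, which by construction is spanned by pure diagrams with strictly larger degree sequences --- and that facet is itself a cone of the fan, so one can recurse without ever needing $\beta'$ to be a module Betti table. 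Your uniqueness paragraph is essentially correct and amounts to the simpliciality of the fan.
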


We strengthen this result to show that all Betti tables of the minimal resolutions
of all finitely generated graded $S$-modules, whether Cohen-Macaulay or not, 
lie inside a certain rational cone.

As noted in \bs \cite{BS}, the \emph{Multiplicity Conjecture} 
of Huneke and Srinivasan (see Herzog and Srinivasan \cite{HS}) follows from the conclusion
of Theorem \ref{main0}. Indeed,
the motivation of Boij and S\"oderberg in formulating their bold and
striking Conjectures  was apparently to prove the Multiplicity Conjecture.
In doing so, it seems to us, they provided a remarkable new way of approaching
free resolutions.

We denote by $e(M)$ the \emph{multiplicity}
of a module $M$; if $M$ is of finite length, a case to which all this reduces,
then $e(M)=\dim_K(M)$, the vector space dimension.

\begin{corollary} [Huneke-Srinivasan Multiplicity Conjecture]
If $M=S/I$ is a Cohen-Macaulay
algebra of codimension $c$, then
$$
\frac{1}{c!}\prod_i \min \{j\mid \beta_{i,j}\neq 0\}
\leq e(M)\leq 
\frac{1}{c!}\prod_i \max\{j\mid \beta_{i,j}\neq 0\}.\qed
$$
\end{corollary}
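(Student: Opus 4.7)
The plan is to extract both inequalities from Theorem \ref{main0} by treating multiplicity as a linear functional on Betti tables and observing that pure resolutions achieve equality in the Multiplicity Conjecture. I would apply Theorem \ref{main0} to $M=S/I$ to write
$$\beta(M) = \sum_{k=1}^{K} \lambda_k \beta(M_k),$$
where each $M_k$ is a codimension-$c$ Cohen-Macaulay module with pure resolution of degree sequence $d^{(k)}=(d_0^{(k)},\dots,d_c^{(k)})$, the $\lambda_k$ are positive rationals, and (after relabeling) $d^{(1)} < d^{(2)} < \cdots < d^{(K)}$ in the termwise order. Since $M$ is cyclic, its $0$-th column consists of a single $1$ in row $0$; comparing with the pure Betti tables (each of which has a single nonzero entry in the $0$-th column, at row $d_0^{(k)}$) forces $d_0^{(k)}=0$ for every $k$ and yields the normalization $\sum_k \lambda_k \beta_{0,0}(M_k)=1$.

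The key numerical input is the multiplicity of a pure Cohen-Macaulay module with degree sequence $(0,d_1,\dots,d_c)$: a short, standard computation with the Hilbert series and the Herzog-K\"uhl formula yields
$$e(M_k) \;=\; \frac{\beta_{0,0}(M_k)}{c!}\prod_{i=1}^{c} d_i^{(k)}.$$
Since multiplicity is read off from the Hilbert series and is therefore a rational linear functional on $\BB$, one has $e(M)=\sum_k \lambda_k e(M_k)$, and combining with the previous formula gives
$$c!\cdot e(M) \;=\; \sum_k \bigl(\lambda_k \beta_{0,0}(M_k)\bigr)\,\prod_{i=1}^{c} d_i^{(k)}.$$
By the normalization, the right side is a \emph{convex} combination of the positive quantities $\prod_{i=1}^{c} d_i^{(k)}$, so it is squeezed between $\min_k \prod_i d_i^{(k)}$ and $\max_k \prod_i d_i^{(k)}$.

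The chain hypothesis on the $d^{(k)}$ supplies the termwise bounds $d_i^{(1)}\le d_i^{(k)}\le d_i^{(K)}$ for every $i$ and $k$, and since all $\lambda_k$ and all $\beta_{i,d_i^{(k)}}(M_k)$ are strictly positive, the support of the $i$-th column of $\beta(M)$ is exactly $\{d_i^{(k)} : 1\le k\le K\}$. Hence $d_i^{(1)} = \min\{j:\beta_{i,j}(M)\ne 0\}$ and $d_i^{(K)} = \max\{j:\beta_{i,j}(M)\ne 0\}$, and bracketing the convex combination by these extremes delivers the two inequalities of the corollary. Since Theorem \ref{main0} does all the heavy lifting, there is no serious obstacle: the remaining content is the pure-resolution multiplicity formula and the translation between the chain extremes and the column-support extremes.
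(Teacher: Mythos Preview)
Your argument is correct and is precisely the derivation the paper has in mind: it does not give its own proof but attributes the deduction to Boij and S\"oderberg, and your write-up is exactly that standard argument---decompose $\beta(M)$ via Theorem~\ref{main0}, use that multiplicity is a linear functional on Betti tables, invoke the pure-case formula $e(M_k)=\beta_{0,0}(M_k)\prod_i d_i^{(k)}/c!$, and bound the resulting convex combination by the chain extremes. The only points worth stating explicitly (and you essentially have them) are that positivity of the $\lambda_k$ rules out cancellation, so the column supports really are $\{d_i^{(k)}\}$, and that $d_0^{(k)}=0$ forces all $d_i^{(k)}>0$, making the termwise inequalities on the chain translate into inequalities on the products.
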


One can include all Cohen-Macaulay modules generated in degree
zero if one replaces $e(M)$ by a normalized multiplicity $e(M)/\beta_{0,0}(M)$.
See Francisco-Srinivasan \cite{Francisco-Srinivasan} for a
recent survey of the many papers on this Conjecture and its generalizations. 
For another application of Theorems \ref{main0} and \ref{existence0}
 see S\"oderberg \cite{JS}.

To prove Theorem \ref{main0} we construct the supporting hyperplanes
of the cone of positive rational multiples of Betti tables of Cohen-Macaulay
modules. These are defined in terms of certain free complexes that
come, in turn, from supernatural vector bundles,
which are vector bundles with special cohomology tables.

 \subsection*{Cohomology Tables}

Let $\CC=\prod_{-\infty}^\infty \QQ^{n+1}$ be the dual of $\BB$.
We think of $\CC$ as the 
space of \emph{rational cohomology tables},
tables with $n+1$ rows
and columns numbered by the integers, defined as follows:
To any 
complex of graded free $S$-modules
$$
E: \quad 0\to E^0\to E^1\to\cdots\to E^{n}\to 0
$$
we write $\gamma_{i,d}(E)$
for the value of the 
Hilbert function of $H^i(E)$ in degree $d$. 

 For example, let $\cE$ be a vector bundle
on $\PP^{n-1}$, and write 
$\cE^*$ for the dual bundle $\cHom(\cE, \cO_{\PP^{n-1}})$.
If  $E$ is the dual of a free resolution of 
the graded module $\oplus_{d} H^0(\cE^*(d))$, then
$\gamma_{i,d}=h^i(\cE(d))$ for $i<n-1$ and $\gamma_{i,d} = 0$
for $i=n-1$ or $n$.
A modification of these examples, with carefully chosen vector
bundles, will be used to define  the supporting
hyperplanes needed for the proof of Theorem \ref{main0}---see
Proposition \ref{linear monad}.

In either case, the 
associated \emph{cohomology table} is the table in $\CC$ with
$\gamma_{i, d-i}$ in the $i$-th row and the $d$-th column:
\small
$$
\begin{matrix}
\cdots&\gamma_{m,-m-1}&\gamma_{m,-m}&\gamma_{m,-m+1}&\cdots&\vline&m\cr
&\vdots&\vdots&\vdots &&\vline&\vdots\cr
\cdots&\gamma_{1,-2}&\gamma_{1,-1}&\gamma_{1,0}&\cdots&\vline&1\cr
\cdots&\gamma_{0,-1}&\gamma_{0,0}&\gamma_{0,1}&\cdots&\vline&0& \cr
\hline
\cdots&-1&0&1&\cdots&\vline&d\backslash i \cr
\end{matrix}
$$
\normalsize

We make this choice of indexing so that the cohomology table
of a vector bundle $\cE$ coincides with
the Betti table of the \emph{Tate resolution} of $\cE$. 
This is a minimal, doubly
infinite exact free complex over the exterior algebra on $n$ generators that is
connected with Bernstein-Gel'fand-Gel'fand duality. It is studied in
Eisenbud-Fl\o ystad-Schreyer \cite{EFS} and Eisenbud-Schreyer \cite{ES-C}.
For consistency with the notation of that paper,
we number the rows from the bottom and the columns from left to right
as in the table above. Proposition \ref{decreasing} gives some general restrictions
on cohomology tables.

Following Hartshorne-Hirschowitz \cite{VB} we say that
a sheaf $\cF$ on $\PP^{n-1}=\PP^{n-1}_K$ has \emph{natural cohomology}
if, for each integer $d$, the cohomology $H^i(\cF(d))$ is nonzero for at most
one value of $i$. 
We will say that $\cF$ has \emph{supernatural cohomology} if, in addition,
the Hilbert polynomial $\chi(\cF(d))$
has distinct integral roots. In this case we define the 
\emph{root sequence} of $\cF$ to be the 
sequence of roots in decreasing order, $z_1>\cdots> z_{m}$.
If $m=n-1$, the case of primary interest
to us, then any sheaf with natural cohomology is locally free
(Remark \ref{Ezra}), so we will
generally speak of supernatural vector bundles. 

\begin{theorem}\label{existence0a}
Any strictly decreasing sequence
of $n-1$ integers is the root sequence of a
supernatural vector bundle on $\PP^{n-1}$. \qed
\end{theorem}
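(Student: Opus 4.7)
The plan is to produce a supernatural bundle with prescribed root sequence $z_1>\cdots>z_{n-1}$ as the direct image of a line bundle along the natural symmetric product map
$$\pi\colon Y:=(\PP^1)^{n-1}\longrightarrow \mathrm{Sym}^{n-1}(\PP^1)=\PP^{n-1},$$
under which $\PP^{n-1}$ is identified with the space of degree $n-1$ forms on $\PP^1$ modulo scalars. I would first establish three properties of $\pi$: (a) it is finite of degree $(n-1)!$, which is immediate; (b) $\pi^*\cO_{\PP^{n-1}}(1)\cong\cO_Y(1,\ldots,1)$, because the hyperplane of forms vanishing at a chosen point $q\in\PP^1$ pulls back to the symmetric divisor $\sum_{i=1}^{n-1}\mathrm{pr}_i^{-1}(q)$, which has multidegree $(1,\ldots,1)$; and (c) $\pi$ is flat in arbitrary characteristic. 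Point (c) is the main delicate step: rather than invoking group-quotient theory, which would require $\mathrm{char}\,K\nmid(n-1)!$, the argument should use miracle flatness---a finite morphism from a Cohen--Macaulay scheme to a regular scheme of the same dimension is automatically flat---applied to $Y\to\PP^{n-1}$, both smooth of dimension $n-1$.

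With $\pi$ in hand, I would set $a_j:=-z_j-1$, so that $a_1<a_2<\cdots<a_{n-1}$, and define
$$\cE:=\pi_*\cO_Y(a_1,a_2,\ldots,a_{n-1}).$$
By flatness of $\pi$, $\cE$ is locally free of rank $(n-1)!$ on $\PP^{n-1}$. The projection formula together with property (b) and the K\"unneth formula then yield
$$H^i\bigl(\PP^{n-1},\cE(d)\bigr)=\bigoplus_{i_1+\cdots+i_{n-1}=i}\;\bigotimes_{j=1}^{n-1}H^{i_j}\bigl(\PP^1,\cO(a_j+d)\bigr).$$
On $\PP^1$ the line bundle $\cO(b)$ has nonzero cohomology only in degree $0$ when $b\geq0$, only in degree $1$ when $b\leq-2$, and is acyclic when $b=-1$. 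Since $a_j+d=d-z_j-1$, the position of the integer $d$ relative to $z_j$ determines the $j$-th factor: $i_j=0$ if $d>z_j$, $i_j=1$ if $d<z_j$, while $d=z_j$ kills that factor entirely. Hence for each integer $d\notin\{z_1,\ldots,z_{n-1}\}$ exactly one summand contributes, concentrated in cohomological degree $|\{j:d<z_j\}|$, and for $d=z_j$ all cohomology vanishes. This gives the natural cohomology condition.

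Finally, the Euler characteristic works out to
$$\chi\bigl(\cE(d)\bigr)=\prod_{j=1}^{n-1}(a_j+d+1)=\prod_{j=1}^{n-1}(d-z_j),$$
a polynomial with distinct integer roots $z_1>\cdots>z_{n-1}$, so $\cE$ is supernatural with the prescribed root sequence. The only genuine obstacle in this strategy is step (c), the flatness of the symmetric quotient map in characteristics dividing $(n-1)!$, where explicit quotient constructions fail and miracle flatness becomes indispensable; everything else is an application of K\"unneth and the cohomology of line bundles on $\PP^1$.
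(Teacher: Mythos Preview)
Your proof is correct and coincides with the paper's approach in the special case where every factor is $\PP^1$: the paper (Theorem~\ref{supernaturals exist}) pushes forward a line bundle along a finite linear projection from the Segre embedding of $\PP^{m_1}\times\cdots\times\PP^{m_k}$ to $\PP^{n-1}$, and the explicit multilinear forms it uses (Proposition~\ref{elementary forms}) are, when all $m_j=1$, precisely the coefficients of $\prod_j(x^{(j)}_0+t\,x^{(j)}_1)$---i.e.\ your symmetric product map. Two small differences: the paper allows grouping consecutive roots into higher-dimensional factors to obtain bundles of smaller rank $\binom{m}{m_1,\ldots,m_k}$ rather than $(n-1)!$, and it deduces local freeness of $\pi_*\cL$ a posteriori from the cohomology (Remark~\ref{Ezra}) rather than via miracle flatness, so your concern about characteristic in step~(c), while correctly handled, turns out to be avoidable.
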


In characteristic zero one can also construct supernatural
bundles with every root sequence using
Bott's
vanishing theorem. This was explained to us by Jerzy Weyman.
The construction, made explicit in Eisenbud-Schreyer \cite{ES-C}
Theorem 5.6 and summarized below, is
to apply an appropriate Schur functor to the universal
rank $n-1$ quotient bundle on $\PP^{n-1}$. 
\goodbreak

The condition of supernatural cohomology
is very tight: 
the cohomology table of a supernatural sheaf 
is determined by its root sequence and its rank
(see Theorem \ref{supernatural}.)
For example, the cohomology table  of a supernatural rank 3 vector bundle on $\PP^3$ with root sequence $z=(3,-1,-4)$ must be 
\scriptsize
$$
\begin{matrix}
&\cdots \cr
&\cdots \cr
&\cdots \cr
&\cdots \cr\hline
&\cr
\end{matrix}\quad
\begin{matrix}
 90&45&16&0&0& 0 & 0 &0 &0&0\cr
 0&0& 0&6&5&0&0&0 & 0  &0\cr
 0&0&0&0& 0 &6&10&9&0&0  \cr
0&0&0&0& 0 &0& 0 &0& 20& 54  \cr\hline
-4&&&-1& &&&3&& \cr
\end{matrix}\quad
\begin{matrix}\
\cdots &\vline& 3\cr
\cdots &\vline&2\cr
\cdots &\vline&1\cr
\cdots & \vline&0& \cr\hline
&\vline&d\backslash i&\cr
\end{matrix}\;
$$
\normalsize

\scriptsize
$$
\begin{matrix}
&\cdots \cr
&\cdots \cr
&\cdots \cr
&\cdots \cr\hline
&\cr
\end{matrix}\quad
\begin{matrix}
 90&45&16&0&0& 0 & 0 &0 &0&0\cr
 0&0& 0&6&5&0&0&0 & 0  &0\cr
 0&0&0&0& 0 &6&10&9&0&0  \cr
0&0&0&0& 0 &0& 0 &0& 20& 54  \cr\hline
-4&&&-1& &&&3&& \cr
\end{matrix}\quad
\begin{matrix}\
\cdots &\vline& 3\cr
\cdots &\vline&2\cr
\cdots &\vline&1\cr
\cdots & \vline&0& \cr\hline
&\vline&d\backslash i&\cr
\end{matrix}\;
$$
\normalsize

Inside the rational vector space $\CC$ lies the set of actual
cohomology tables of vector bundles, and since the direct sum
of bundles corresponds to the addition of tables, it is natural
to consider the rational cone that this set 
generates. We will show in \S \ref{vector bundles} that this convex cone is
the union of the simplicial cones of the
\emph{fan of supernatural bundles}. The cones in this fan
 correspond to termwise
totally ordered sets of root sequences, and have as 
extremal rays the cohomology tables of supernatural bundles.
Moreover
the supporting hyperplanes of the cone of cohomology tables of
vector bundles are given by linear functionals defined from pure free resolutions
of modules of finite length, using the same construction as we use for the proof
of the \bs Conjectures.
 
 \begin{theorem}\label{main1}
The cohomology table of any vector
bundle on $\PP_{K}^{n-1}$
has a unique expression as a positive
rational linear combination of the supernatural
cohomology tables corresponding to a chain of root sequences. 
\end{theorem}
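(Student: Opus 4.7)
The plan is to prove Theorem \ref{main1} in parallel with Theorem \ref{main0}, with the roles of Betti tables and cohomology tables interchanged: the supporting hyperplanes of the cone of vector bundle cohomology tables will be built from pure free resolutions of finite length modules, and the extremal rays will be the supernatural cohomology tables supplied by Theorem \ref{existence0a}. Concretely, I would set up the natural pairing $\BB\times\CC\to\QQ$ that pairs a complex $F$ of free $S$-modules with a cohomology table $\gamma\in\CC$ to produce a rational number, and for each degree sequence $d=(d_0<\cdots<d_n)$ I would take $F$ to be a pure resolution of a finite length $S$-module with degree sequence $d$ (which exists by Theorem \ref{existence0}). This yields a functional $L_d\colon\CC\to\QQ$. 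The first key lemma is that $L_d(\gamma(\cE))\geq 0$ for every vector bundle $\cE$ on $\PP^{n-1}$; the idea is to interpret this quantity, via a BGG-style hypercohomology or derived-tensor computation, as the dimension of a naturally occurring vector space attached to the pair $(F,\cE)$, rendering the nonnegativity manifest. This is the analogue, with roles reversed, of the positivity statement used to prove Theorem \ref{main0}.

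Next I would analyze the cone $C\subset\CC$ cut out by the half-spaces $\{L_d\geq 0\}$ as $d$ ranges over all degree sequences of length $n+1$. By Theorem \ref{existence0a} there is a supernatural bundle $\cE_z$ with any prescribed root sequence $z$, and its cohomology table $\gamma_z$ is determined up to positive scalar. A direct check shows that $L_d(\gamma_z)$ vanishes precisely when $d$ and $z$ fail to ``interlace'' in a precise combinatorial sense determined by the shapes of pure resolutions and supernatural cohomology tables, so the $\gamma_z$ lie on the boundary of $C$; moreover, for a termwise chain $z^{(1)}<\cdots<z^{(s)}$ of root sequences the corresponding tables $\gamma_{z^{(1)}},\ldots,\gamma_{z^{(s)}}$ are linearly independent, and their positive span is a face of $C$. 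Assembling these faces gives a simplicial fan whose support is $C$.

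The third step is the greedy decomposition. Given $\gamma=\gamma(\cE)$, I would read off from the shape of $\gamma$ a candidate ``top'' root sequence $z^{(1)}$ by recording, for each $i$, the transition between rows $i-1$ and $i$, then subtract the largest multiple $\lambda_1\gamma_{z^{(1)}}$ for which $\gamma-\lambda_1\gamma_{z^{(1)}}$ remains in $C$. Maximality of $\lambda_1$ forces the residual table to lie on a new boundary hyperplane $\{L_d=0\}$, which translates into a strict increase of the top root sequence in the termwise order; iterating produces in finitely many steps a decomposition of $\gamma$ as a positive rational combination of supernatural cohomology tables whose root sequences form a chain. Uniqueness follows from the simplicial fan structure of $C$, since each point lies in a unique minimal simplicial face.

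I expect the main obstacle to be the positivity lemma $L_d(\gamma(\cE))\geq 0$: it demands a conceptual pairing between a pure finite-length resolution $F$ and the cohomology table of a vector bundle $\cE$ whose output is visibly nonnegative, rather than merely a signed Euler characteristic. The cleanest route is likely to tensor $F$ with a Beilinson-type monad of $\cE$ and read the resulting hypercohomology; once this lemma is in place, the cone and fan analysis and the greedy algorithm are essentially formal and parallel the proof of Theorem \ref{main0}.
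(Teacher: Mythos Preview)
Your overall architecture is right and matches the paper: pair pure resolutions with cohomology tables, prove nonnegativity, identify the exterior facets, and conclude via the simplicial fan. But there is a genuine gap in the key step.

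The functionals $L_d=\langle F,-\rangle$ built from the \emph{unmodified} pairing do not cut out the cone of cohomology tables. Your claimed ``direct check'' that $L_d(\gamma_z)$ vanishes exactly under an interlacing condition fails on one side. Concretely, for an exterior facet one needs a functional that vanishes on all supernatural tables with root sequence $\geq z^+$ \emph{and} on all those with root sequence $\leq z^-$. The vanishing for $\leq z^-$ does follow from the Euler-characteristic interpretation (this is the regularity criterion in part~2 of the paper's positivity theorem). But for $\geq z^+$ the unmodified $\langle F,\cE\rangle$ is typically nonzero: see the paper's Example~\ref{7.3}, where the facet equation differs from $\langle F,-\rangle$ by zeroing out several entries in the top two rows. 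So the half-spaces $\{L_d\geq 0\}$ are not the supporting hyperplanes you need, and the greedy subtraction step (``maximality of $\lambda_1$ forces the residual onto a new boundary hyperplane $\{L_d=0\}$'') breaks down.

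What the paper actually does is introduce a family of \emph{modified} functionals $\langle F,-\rangle_{c,\tau}$, obtained from $\langle F,-\rangle$ by truncating the contribution of certain cohomology groups in degrees beyond a cutoff. The truncation is designed so that the functional vanishes trivially on supernatural tables with root sequence $\geq z^+$, while agreeing with $\langle F,-\rangle$ on those with root sequence $\leq z^-$. The price is that nonnegativity of $\langle F,\cE\rangle_{c,\tau}$ is no longer an Euler characteristic; it requires a more delicate spectral-sequence argument (Lemma~\ref{technical} and Theorem~\ref{modified}) that uses minimality of $F$ in an essential way. Your plan recognizes that positivity is the hard part, but the specific obstruction is not producing a nonnegative pairing---that part is easy---it is producing one that simultaneously vanishes on the correct facet and remains nonnegative after the necessary modification.
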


As a corollary, we obtain an analogue of the Multiplicity Conjecture in the vector bundle setting,
giving new bounds for the slope of a vector bundle in terms of its cohomology.
To state it, we say that the \emph{cohomology range} of a vector bundle $\cE$ on
$\PP^{m}$ is a pair of weakly decreasing sequences of integers
$$
 r(\cE)=(r_1\ge \cdots \ge r_{m+1}=-\infty ) <
 R(\cE) =( \infty =R_0\ge\cdots\ge R_{m}) 
$$
that can be characterized as
the termwise greatest and least weakly decreasing
sequences, respectively, for which
$H^i \cE(d-i) \not= 0 \Rightarrow r_{i+1} < d < R_i$ (see 
\S 7 for another characterization.)

\begin{corollary}\label{slope}
If $\cE$ is a vector bundle
on $\PP^m$, then the slope $\mu(\cE):= \deg \cE/\rank \cE$ satisfies
$$ 
\frac{-1}{m}\sum_{i=1}^{m} R_i(\cE)
\le 
\mu(\cE) 
\le 
\frac{-1}{m}\sum_{i=1}^{m} r_i(\cE).
$$
\end{corollary}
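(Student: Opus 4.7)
The plan is to combine Theorem \ref{main1} with the observation that rank and degree are both linear functionals of the cohomology table. By Theorem \ref{main1}, write $\gamma(\cE) = \sum_k c_k\,\gamma(\cF_k)$ as a finite positive rational combination of cohomology tables of supernatural bundles $\cF_k$ with root sequences $z^{(k)}_1 > \cdots > z^{(k)}_m$. The Hilbert polynomial $\chi(\cE(d)) = \sum_i (-1)^i h^i\cE(d)$ depends linearly on the cohomology table, and by Hirzebruch--Riemann--Roch on $\PP^m$ its top two coefficients are $\rank(\cE)/m!$ and $(\deg \cE + \rank(\cE)(m+1)/2)/(m-1)!$. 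Hence rank and degree are additive across the decomposition, and
$$
\mu(\cE) \;=\; \frac{\sum_k c_k\,\rank(\cF_k)\,\mu(\cF_k)}{\sum_k c_k\,\rank(\cF_k)}
$$
is a convex combination of the slopes $\mu(\cF_k)$.

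It remains to bound each $\mu(\cF_k)$ by the cohomology range of $\cE$. For a supernatural $\cF_k$, matching $(\rank\cF_k/m!)\prod_i(d - z^{(k)}_i)$ against the Riemann--Roch expansion yields $\mu(\cF_k) = -\frac{1}{m}\sum_{i=1}^m z^{(k)}_i - \frac{m+1}{2}$. The $i$-th row of the cohomology table of $\cF_k$ is supported exactly on the integers strictly between $z^{(k)}_{i+1} + i$ and $z^{(k)}_i + i$, and propagating this support through the weakly decreasing envelope gives $R_i(\cF_k) = z^{(k)}_i + i$ and $r_i(\cF_k) = z^{(k)}_i + i - 1$ in every case (including the degenerate situation where a row is empty because two consecutive roots differ by one). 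Because $\gamma(\cE)$ is a positive combination of the termwise non-negative tables $\gamma(\cF_k)$, taking row supports gives $R_i(\cE) \ge R_i(\cF_k)$ and $r_i(\cE) \le r_i(\cF_k)$ for all $k$ and $i$, i.e.\ $z^{(k)}_i \le R_i(\cE) - i$ and $z^{(k)}_i \ge r_i(\cE) - i + 1$. Substituting into the slope formula and summing using $\sum_{i=1}^m i = \binom{m+1}{2}$ produces
$$
-\frac{1}{m}\sum_{i=1}^m R_i(\cE) \;\le\; \mu(\cF_k) \;\le\; -\frac{1}{m}\sum_{i=1}^m r_i(\cE) - 1
$$
for every $k$; these inequalities then transfer to the convex combination $\mu(\cE)$, giving the stated bounds (in fact with $-1$ to spare on the right).

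The principal technical obstacle is the bookkeeping in the second paragraph: one must correctly read off the cohomology range of a supernatural bundle from its root sequence, including the degenerate cases where some rows of the table are empty, and verify that the weakly-decreasing envelope defining $r(\cE)$ and $R(\cE)$ interacts cleanly with taking unions of supports. Once the identities $R_i(\cF_k) = z^{(k)}_i + i$ and $r_i(\cF_k) = z^{(k)}_i + i - 1$ are established, the remainder of the argument is an elementary arithmetic manipulation, and no input beyond Theorem \ref{main1} and Hirzebruch--Riemann--Roch on $\PP^m$ is needed.
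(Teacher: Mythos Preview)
Your proof is correct and follows essentially the same route as the paper's: decompose the cohomology table via Theorem \ref{main1}, read off rank and degree as the top two coefficients of the Hilbert polynomial so that $\mu(\cE)$ is a convex combination of the $\mu(\cF_k)$, and then bound the roots $z_i^{(k)}$ of each supernatural piece by the cohomology range of $\cE$ using the containment of supports. Your formula $r_i(\cF_k)=z_i^{(k)}+i-1$ is in fact the correct one (the paper's Proposition \ref{range} misstates it as $z_i+i$, though $R_i(\cF_k)=z_i^{(k)}+i$ is right), so the ``$-1$ to spare'' you observe on the upper bound is a genuine sharpening of the stated inequality---equality there holds precisely for supernatural bundles.
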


\subsection*{Bilinear Functionals of Resolutions and Complexes}

\noindent 
The heart of this paper is the analysis 
of certain bilinear forms on $ \BB\times \BB^*$, and their positivity
and vanishing properties. 
The central object is the form
$$
\langle B,C\rangle := \sum_{\{i,j,k\mid j\leq i\}} (-1)^{i-j}\beta_{i,k}\gamma_{j, -k}.
$$
where $B=(\beta_{i,i+d})$ is a Betti table and $C=(\gamma_{j, d-j})$ is a 
cohomology table.
In \S \ref{pos1} we prove that when
$C$ is the cohomology table of a free
complex $E$ and 
 $B$ is the Betti table of a free resolution $F$ over $S$
 then
$$
\langle B,C\rangle = \sum_j \chi(F_{\geq j}\otimes H^j(E))\quad \geq \quad 0.
$$
Here $\chi$ denotes the Euler characteristic of the degree 0 part of the complex, that is,
$$
\chi(F_{\geq j}\otimes H^j(E)) := \sum_{k\geq j}(-1)^{k-j}\dim_K\bigl((F_k\otimes H^j(E))_0\bigr).
$$
The functionals $\langle F,E\rangle=:\langle \beta(F), \gamma(E)\rangle$ do 
\emph{not} directly give the supporting hyperplanes we need for the proof of 
Theorem \ref{main0},
because the cone of minimal resolutions of Cohen-Macaulay modules does not
contain all non-minimal resolutions. In \S \ref{pos2} we construct
 a family of modifications 
$\langle B,C\rangle_{c,\tau}$ that are still non-negative when
$B$ is the Betti table of a minimal free resolution. Applied with
$E$ a supernatural vector bundle, or $F$ a pure free resolution,
these define
the functionals needed for the proofs of Theorems \ref{main0}
and \ref{main1}. 

\subsection*{The Set of Betti Tables}

Though Theorem \ref{main0} gives the rational cone of minimal free resolutions
of Cohen-Macaulay modules, it does not tell us which points in $\BB$ are in the monoid
of Betti tables of
actual resolutions. Eisenbud, Fl\o ystad and Weyman \cite{EFW} conjecture
that any sufficiently large integral point on a ray of the cone corresponding
to a pure resolution should be
a $\beta(M)$ (this is not true in general for rays
not corresponding to pure resolutions.) Daniel Erman \cite{Erman}
gives many additional restrictions that points in the rational cone satisfy if they
come from resolutions, and shows that 
if $d<D$ then the monoid of resolutions of 
Cohen-Macaulay modules generated in degree $\geq d$ and having regularity $\leq D$ is 
finitely generated. Of course one can ask similar questions about cohomology
tables.

The monoid of resolutions
differs in characteristic zero and in finite characteristic.
For example, take $n=5$ and let $M$ be a general Artinian Gorenstein factor ring of $S$
with Hilbert function $1,5,5,1$. Such rings can be obtained from
the homogeneous coordinate rings of canonical curves of genus 7 by factoring
out a regular sequence of linear forms.
In characteristic zero the general ring of this type has Betti table
\scriptsize
$$
\beta(M)=
\begin{matrix}
1&0&0&0&0&0 \cr
0&10&16&0&0&0 \cr
0&0&0&16&10&0 \cr
0&0&0&0&0&1 \cr
\end{matrix}
$$
\normalsize
as proven by Schreyer \cite{Schreyer}.
But in characteristic 2 Kunte \cite{kunte} shows that
no Cohen-Macaulay module has this Betti table.
(Modules of finite length with
twice this Betti table are easily constructed
experimentally.) The Betti table of the general canonical
curve of genus 7 in characteristic 2 was shown by 
Schreyer \cite{Schreyer} (see also  Mukai \cite{Mukai} and \cite{Mukai-unpublished}) to be
\scriptsize
$$
\beta(M)=
\begin{matrix}
1&0&0&0&0&0 \cr
0&10&16&1&0&0 \cr
0&0&1&16&10&0 \cr
0&0&0&0&0&1 \cr
\end{matrix}.
$$
\normalsize

\subsection*{The Structure of This Paper} 
In \S \ref{non-cancellation} we give the algorithm for
producing the decomposition of Theorem \ref{main0} suggested
by Boij and S\"oderberg, and some examples of how
it can be used. In \S \ref{BS explanation}
we review the construction of the
 \bs fan associated to pure Betti tables. 
 In section \S \ref{Facet Equations} 
we review the description of the exterior facets of this
fan discovered by \bs \cite{BS}, and
we explain an algorithmic
construction of the supporting hyperplane of 
an exterior facet.

Sections \ref{pos1} and \ref{pos2} describe the bilinear
forms that we use, together with their positivity and vanishing
properties.

Our analysis of sheaves with supernatural cohomology is carried out in section \S
\ref{supernatural section}, and the proof of Theorem \ref{main0} is completed
in \S \ref{main proof}. The proofs of Theorem \ref{main1} and Corollary \ref{slope}
are carried out in \S \ref{vector bundles}.

\subsection*{Thanks!}
We are grateful to Jerzy Weyman for showing us that Ulrich Sheaves
could be constructed from homogeneous bundles
and teaching us about their cohomology; and to 
Dan Grayson and Mike Stillman, for their tireless assistance with 
Macaulay 2. We would never have discovered the crucial ideas in \S \ref{pos1}
and \S \ref{pos2}
without the extensive family of examples provided by
their program \cite{M2}. We also
thank Daniel Erman and Steven Sam for interesting discussions and
computer experiments related to this paper, and Silvio Levy for his
expert and unstinting help with TeX. A group of young mathematicians
at the 2008 AMS Math Research Communities meeting in Snowbird, Utah,
read this paper carefully and made many suggestions about exposition,
which we have gratefully adopted.

\section{An Algorithm and an Example: Consecutive Betti Numbers}
\label{non-cancellation}

The correctness of the following
algorithm for decomposing a Betti table may
clarify the meaning of Theorem \ref{main0},
from which it follows easily.
It was conjectured
in \bs \cite{BS}.

\smallbreak
{\small
\noindent{\bf Decomposition Algorithm}\\
\emph{Input:} A graded Cohen-Macaulay $S$-module $M$
of codimension $c$.\\
\emph{Output:} A list of positive rational coefficients $r_i$ 
and pure Betti Tables $\alpha^{(i)}$ whose degree sequences form a chain, such
that $\beta=\sum r_i\alpha_i$.

\begin{enumerate}
\item BEGIN: Set $L$ equal to the empty list. Set $\beta:=\beta(M)$.
\item For $i=0,\dots, c$ let $d_i=\min\{ j\mid \beta_{i,j}\neq 0\}$.
Let $\alpha$ be a pure Betti table with degree sequence $d=(d_0,\dots,d_c).$ 
Let $r$ be the largest rational number such that $\beta':=\beta-r\alpha$ has
non-negative entries. 
\item Add $(r, \alpha)$ to the list $L$. If $\beta'=0$ then END. Otherwise, set
$\beta:=\beta'$, and go to step 2.
\end{enumerate}
\smallbreak
}

Here are some examples showing how Theorem \ref{main0} 
gives bounds on consecutive Betti numbers, and limits the extent
of ``non-cancellation'' in minimal resolutions.

\begin{example} Let $B_x$ be the Betti table
\scriptsize
$$
B_x=\begin{matrix}
1&0&0&0&0&0 \cr
0&10&16&x&0&0 \cr
0&0&x&16&10&0 \cr
0&0&0&0&0&1 \cr
\end{matrix}.
$$
\normalsize
In characteristic 0, the homogeneous coordinate
ring of the general canonical curve
of genus 7 has resolution with Betti table $B_0$,
and it is known (Schreyer \cite{Schreyer}) that if the curve admits a realization as
a plane sextic with 3 nodes, then the resolution has Betti table $B_9$. 
 The Decomposition Algorithm above begins
 with a Betti table 
\scriptsize
  $$
B'=\begin{matrix}
5&0&0&0&0&0 \cr
0&60&128&90&0&0 \cr
0&0&0&0&20&0 \cr
0&0&0&0&0&3 \cr
\end{matrix}
$$
\normalsize
that corresponds to the pure degree
 sequence $0,2,3,4,6,8$.
This implies that if $B_x$ is  a rational multiple
of the Betti table of a Cohen-Macaulay module, then
 $x/16\leq 90/128$, that is $x\leq 11.25$, and
Theorem \ref{existence0} shows  that there is a module of finite length whose Betti table is
  a multiple of $B_{11}$. In fact, writing 
 $B''$ for the Betti table of the resolution dual to the one described by $B'$, the reader
 may check that 
 $$
 B_{11} = \frac{1}{45}B_0+\frac{11}{90}B'+\frac{11}{90}B''.
 $$
 We do not know whether
 there are actually modules of finite length having Betti table $B_{11}$.
 
Here is another example, which gives a sharp bound the linear strand of a resolution.
Using \bs \cite{BS2}, one could even drop the Cohen-Macaulay
condition, which is why we have written ``projective dimension'' instead
of ``codimension''.

\begin{corollary} \label{1-regular} Let $K$ be any field and
let $M$ be a graded Cohen-Macaulay module of projective dimension $\leq c$, 
 generated in degree $\geq 0$ over $K[x_1,\ldots, x_n]$.
If $\beta_{p+1,p+1}(M)=0$ then 
$$
{\beta_{p,p}}(M) \le  \frac{c+2-p}{2p}\beta_{p-1,p-1}(M).
$$
\end{corollary}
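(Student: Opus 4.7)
The plan is to apply the Boij–Söderberg decomposition of Theorem \ref{main0} and then extract the bound from the Herzog–Kühl formulas. Write the codimension of $M$ as $c':=\mathrm{pd}\,M\leq c$, and use Theorem~\ref{main0} to write
$$
\beta(M)=\sum_i r_i\,\alpha^{(i)}, \qquad r_i>0,
$$
where each $\alpha^{(i)}$ is a pure Betti table of a codimension-$c'$ Cohen–Macaulay module with degree sequence $d^{(i)}=(d_0^{(i)}<\cdots<d_{c'}^{(i)})$ and the $d^{(i)}$ form a chain. Because $M$ is generated in degree $\geq 0$ we have $d_0^{(i)}\geq 0$, and since the $d^{(i)}$ are strictly increasing this forces $d_k^{(i)}\geq k$ for every $k$.

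For $k\geq 0$ let $S_k=\{i:d_k^{(i)}=k\}$. Then $\alpha^{(i)}_{k,k}\neq 0$ exactly when $i\in S_k$, and the forcing above gives $S_p\subseteq S_{p-1}$ (equality $d_p^{(i)}=p$ makes all earlier entries of the degree sequence minimal). Since all the $r_i$ are positive and all entries of $\alpha^{(i)}$ are nonnegative, the hypothesis $\beta_{p+1,p+1}(M)=0$ forces $S_{p+1}=\emptyset$; combined with strict monotonicity, for every $i\in S_p$ this gives $d_{p+1}^{(i)}\geq p+2$, and then $d_{p+j}^{(i)}\geq p+1+j$ for all $j\geq 1$.

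Now I would apply the Herzog–Kühl formula to a single pure table indexed by $i\in S_p$ with degree sequence $(0,1,\ldots,p,d_{p+1}^{(i)},\ldots,d_{c'}^{(i)})$. The factors with $j<p$ contribute $p!$ to the denominator of $\alpha^{(i)}_{p,p}$ and $(p-1)!$ to that of $\alpha^{(i)}_{p-1,p-1}$; the factor at $j=p$ contributes $1$ to the denominator of $\alpha^{(i)}_{p-1,p-1}$ and nothing to $\alpha^{(i)}_{p,p}$. Thus
$$
\frac{\alpha^{(i)}_{p,p}}{\alpha^{(i)}_{p-1,p-1}}=\frac{1}{p}\prod_{j=1}^{c'-p}\frac{d_{p+j}^{(i)}-p+1}{d_{p+j}^{(i)}-p}.
$$
Each factor $(e+1)/e$ is a decreasing function of $e$, so using $d_{p+j}^{(i)}-p\geq j+1$ and telescoping,
$$
\prod_{j=1}^{c'-p}\frac{d_{p+j}^{(i)}-p+1}{d_{p+j}^{(i)}-p}\leq \prod_{j=1}^{c'-p}\frac{j+2}{j+1}=\frac{c'-p+2}{2}\leq\frac{c-p+2}{2}.
$$

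Finally I would assemble the inequality. For every $i\in S_p$ we have $\alpha^{(i)}_{p,p}\leq\tfrac{c+2-p}{2p}\,\alpha^{(i)}_{p-1,p-1}$, so since $S_p\subseteq S_{p-1}$ and all $r_i\alpha^{(i)}_{p-1,p-1}$ are nonnegative,
$$
\beta_{p,p}(M)=\sum_{i\in S_p}r_i\alpha^{(i)}_{p,p}\leq \frac{c+2-p}{2p}\sum_{i\in S_p}r_i\alpha^{(i)}_{p-1,p-1}\leq \frac{c+2-p}{2p}\,\beta_{p-1,p-1}(M).
$$
The main content of the argument is Theorem~\ref{main0}; once that is in hand, the only delicate step is recognising that the vanishing hypothesis propagates term-by-term through the decomposition (because the decomposition is positive), and that the worst case of the Herzog–Kühl ratio is achieved on the minimal admissible tail $(p+2,p+3,\ldots)$. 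I do not expect any genuine obstacle beyond correctly tracking the indexing.
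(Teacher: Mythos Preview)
Your proof is correct and follows essentially the same approach as the paper: decompose $\beta(M)$ via Theorem~\ref{main0}, observe that the vanishing hypothesis propagates through the positive decomposition, and then bound the Herzog--K\"uhl ratio $\alpha^{(i)}_{p,p}/\alpha^{(i)}_{p-1,p-1}$ on each pure piece by identifying the extremal degree sequence $(0,1,\dots,p,p+2,\dots,c+1)$. The paper's proof is terser---it simply asserts that the ratio $\beta_{p-1,d_{p-1}}/\beta_{p,d_p}$ is monotone in the tail $(d_{p+1},\dots,d_c)$ and names the minimizing sequence---while you make the telescoping product and the summation over the decomposition explicit, but the content is the same.
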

\begin{proof} Consider, a module with pure resolution having degree sequence
$d=(d_0,\dots,d_c)$, and the ratio of its betti numbers
$\beta_{p-1,d_{p-1}}/\beta_{p,d_p}$. 
One checks from the Herzog-K\"uhl formula that this ratio is
a monotonically increasing function of the partial degree sequence
 $(d_{p+1},\dots,d_c)$. The ratio given in the Corollary
is the one associated to the pure resolution with the smallest degree sequence
having $d_0=0, d_p=p, d_{p+1}>p+1$, namely,
$0,1,\dots,p,p+2,\dots,c+1$. \end{proof}
\end{example}

\section{The \bs Fan and its Facets}
\label{Facet Equations}
\label{BS explanation}

A central insight of Boij and S\"oderberg \cite{BS} is
the identification of a certain simplicial fan inside $\BB$.
We begin by describing it.

For each interval $a<b$ of degree sequences, we consider the {\it order complex}
of this interval, whose simplices are the totally ordered subsets (chains) in the 
interval. For example, if $a_0<b_0$ are integers, then
the interval between $(a_0,a_0+1,\dots, a_0+n)$ and 
$(b_0,b_0+1,\dots, b_0+n)$
 is the well-known Young poset associated to the Schubert cell decomposition
of the Grassmannian $G(n+1,b_0-a_0)$. 
Since every
maximal chain of degree sequences between $d$ and $e$ has
1+$\sum (d_i-e_i)$ elements, 
the order complex is equidimensional. 
In fact, the restriction to particular intervals $[a,b]$ in the Young poset is
unnecessary, since the fan in a larger interval restricts to the fans in 
smaller intervals.

Boij and S\"oderberg show that the Betti tables coming from any one chain
of degree sequences are linearly independent, and that the 
map taking a degree sequence to a sequence of Betti numbers
of the corresponding pure resolution provides a geometric realization
of the order complex as a \emph{simplicial fan}---that is, a collection
of simplicial cones intersecting along faces---in $\BB$. 
We will call this fan the \emph{\bs fan}.
They conjectured that the union of the cones in the \bs fan is the convex
cone of Betti tables of Cohen-Macaulay modules, and this is the content
of Theorems \ref{existence0} and \ref{main0}. 

We will call a facet of a simplicial cone in the \bs fan
``exterior'' if it is contained in a unique simplicial cone in the fan. 
The exterior facets are described as follows.

\begin{proposition}\label{outer facets} \bs \cite{BS}[Proposition 2.12.]
Fix an interval $[\underline d, \overline d ]$ in the poset of
degree sequences, let  $\Delta$ be a
maximal
chain of degree sequences between $\underline d$ and
$\overline d$, and let $\Phi = \Delta\setminus \{f\}$ 
be a facet of $\Delta$. The facet $\Phi$ is exterior iff one of the following holds
\begin{enumerate}
\item $\Phi$ is obtained from $\Delta$ by removing the minimal or maximal
element. In the former case the supporting hyperplane of $\Phi$ is
$\beta_{i, \underline d_i}=0$, where $i$ is the index where
$\underline d$ and $\underline d^+$, the next sequence in the chain, differ, and dually in the latter case; or
\item The degree sequences $f^-$ and $f^+$ immediately below and above
$f$ in $\Delta$ differ in exactly one position, say $f^-_k<f_k<f^+_k$. Since
$\Delta$ is maximal we must have $f^-_k=f_k-1, f^+_k=f_k+1$, and the equation 
of the supporting hyperplane is $\beta_{k, f_k}=0$; or
\item The degree sequences $f^-$ and $f^+$ immediately below and above
$f$ in $\Delta$ differ in exactly two adjacent positions, say
$\tau$ and $\tau+1$, and we have $f_{\tau}=f^-_\tau=f^+_\tau-1,f_{\tau+1} =  f^-_{\tau+1}+1=f^+_{\tau+1}$ and
$f_{\tau+1}=f_\tau+2$ (see Example \ref{Ex1} for a picture.)
\end{enumerate}
\end{proposition}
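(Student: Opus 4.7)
The plan is to reduce the statement to a combinatorial classification of ``flips'' in the poset of degree sequences, and then to read off the supporting hyperplanes in the simple cases by inspecting pure Betti tables. The key observation is that the facet $\Phi = \Delta \setminus \{f\}$ is exterior precisely when no $f' \neq f$ can replace $f$ so that $\Phi \cup \{f'\}$ is again a maximal chain in $[\underline d, \overline d]$. Thus I need to describe exactly when such a replacement exists and to show that the three listed conditions correspond to the situations where it does not.

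If $f = \underline d$, every maximal chain in $[\underline d, \overline d]$ must begin at $\underline d$, so no replacement exists and $\Phi$ is exterior. To pin down the hyperplane, I would note that if $\underline d$ and $\underline d^+$ differ only at coordinate $i$, then for every other sequence $d'$ in the chain one has $d'_i \geq \underline d_i + 1 > \underline d_i$; hence among the pure Betti tables attached to the sequences of $\Delta$, only the one for $\underline d$ has a nonzero entry in position $(i, \underline d_i)$. Consequently $\Phi$ lies on the hyperplane $\beta_{i, \underline d_i} = 0$ while $f$ does not, as claimed. The case $f = \overline d$ is handled symmetrically.

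For $f$ an interior element of $\Delta$, maximality of the chain forces $f - f^- = e_i$ and $f^+ - f = e_j$ for some coordinates $i, j$, and any candidate $f'$ with $f^- < f' < f^+$ in the product order differs from $f^\pm$ only at coordinates $i$ or $j$. A short enumeration shows that, when $i \neq j$, the only candidates strictly between $f^-$ and $f^+$ are $f^- + e_i$ and $f^- + e_j$, while when $i = j$ the only candidate is $f = f^- + e_i$. This last subcase gives case~(2); the hyperplane is $\beta_{i, f_i} = 0$ because $d'_i$ equals $f_i$ only for $d' = f$ within $\Delta$. When $i \neq j$ and $|i - j| > 1$, both candidates are automatically strictly increasing (the required inequalities at positions $i$ and $j$ decouple and follow from strict monotonicity of $f^\pm$), so $\Phi$ is interior. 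When $|i - j| = 1$, say $j = i + 1$, the candidate $f^- + e_i$ is strictly increasing iff $f^-_{i+1} > f^-_i + 1$; hence exactly one of the two candidates is invalidated precisely when $f^-_{i+1} = f^-_i + 1$, and a direct calculation shows this is equivalent to the tightness condition $f_{\tau+1} = f_\tau + 2$ stated in case~(3).

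The main obstacle is organizing the adjacent case uniformly across the two symmetric sub-situations (whether the tight pair arises between $f^-$ and $f$ or between $f$ and $f^+$, and which of $j = i+1$ or $j = i-1$ holds) so that the formulation of case~(3) emerges cleanly. Everything else is bookkeeping: the case analysis is exhaustive, so showing that each of (1)--(3) forces the facet to be exterior while the remaining configurations admit a flip simultaneously establishes both directions of the stated equivalence.
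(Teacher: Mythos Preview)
The paper does not give its own proof of this proposition; it is quoted from Boij--S\"oderberg \cite{BS}, Proposition~2.12. Your combinatorial approach---characterizing an exterior facet as one admitting no ``flip'' $f\mapsto f'$ with $\Phi\cup\{f'\}$ again a maximal chain---is exactly the standard argument, and your treatment of the endpoint case~(1) and of the case $i=j$ giving~(2) is correct.

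Your analysis of the adjacent case, however, contains a genuine error. With your conventions $f=f^-+e_i$ and $f^+=f+e_j$ (so $i\neq j$), the unique alternative to $f$ strictly between $f^-$ and $f^+$ is $f'=f^-+e_j$, \emph{not} $f^-+e_i$: the latter equals $f$ itself, and your condition ``$f^-_{i+1}>f^-_i+1$'' is nothing but $f_i<f_{i+1}$, which holds by hypothesis. More importantly, the two sub-cases $j=i+1$ and $i=j+1$ are not symmetric. When $j=i+1$, the inequality $f^+_j<f^+_{j+1}$ reads $f^-_j+1<f^-_{j+1}$ (since $j+1\neq i$), which is exactly what is needed for $f'=f^-+e_j$ to be strictly increasing; hence $f'$ is always a valid degree sequence and $\Phi$ is interior. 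Only when $i=j+1$ can $f'$ fail: then one needs $f^-_{j}+1<f^-_{j+1}=f^-_i$, which is not forced by the validity of $f$ or $f^+$, and fails precisely when $f^-_i=f^-_{i-1}+1$. Setting $\tau=j=i-1$, this is exactly the configuration $f_\tau=f^-_\tau$, $f_{\tau+1}=f^-_{\tau+1}+1$, $f_{\tau+1}=f_\tau+2$ of case~(3). Once you check the correct candidate and separate the two adjacent sub-cases, the argument goes through as intended.
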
 

We denote the facet described in part 3 by $\facet(f, \tau)$.
To show that one of these exterior facets of the \bs fan
is an exterior facet of the cone of Betti tables, it suffices to prove that a supporting hyperplane of $\Phi$  is positive on all
Betti tables of minimal free resolutions of modules of finite length.
Without the existence this condition implies only that the cone of Betti tables is contained in the fan.
The form of the equations for the supporting
hyperplanes of the exterior facets in parts 1 and 2 in Proposition \ref{outer facets}
makes this positivity obvious, so it suffices to treat facets of the form
$\facet(f,\tau)$ as in Part 3) of the Proposition. 

Since the \bs fan lies in a proper subspace of $\BB$, the
facet equation for a given facet is not unique---we may add any combination of the
Herzog-K\"uhl equations. However, we will show that
 there
are special linear functions defining the facets that are
non-negative on the Betti table of \emph{any} minimal free resolution.
The  individual $\beta_{i,j}$ satisfy this property for
facets of type 1 and 2 in Proposition \ref{outer facets}. We will exhibit
distinguished equations for the supporting hyperplane of any facet 
of the form $\facet(f,\tau)$, which we will call the upper and lower
equations. We will show that the upper equation is
non-negative on every \emph{minimal} free resolution.
It can be constructed algorithmically:

\begin{proposition} \label{upper equation}
Let $f^-<f<f^+$ be degree sequences as in part 3 of Proposition \ref{outer facets},
There is a unique hyperplane $U$ in $\BB$ that contains $\facet(f,\tau)$ and 
also all tables concentrated in degrees $\geq f^+$, which we call the \emph{upper hyperplane}
for $\facet(f,\tau)$. Similarly, there is a unique \emph{lower hyperplane} that contains
all tables $\leq f^-$. 
\end{proposition}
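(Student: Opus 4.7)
My plan is to reduce the proposition to a finite-dimensional rank computation. Since both $\facet(f,\tau)$ and the relevant truncation condition are supported inside the rectangle $[\underline d,\overline d]$, I would work in the finite-dimensional subspace $\BB_{[\underline d,\overline d]}\subset\BB$ spanned by the basis tables $e_{i,j}$ with $\underline d_i\leq j\leq\overline d_i$, and let $T\subset\BB_{[\underline d,\overline d]}$ be the subspace of tables with $\beta_{i,j}=0$ for $j<f^+_i$ (``tables concentrated in degrees $\geq f^+$''). A hyperplane of $\BB$ that contains $T$ is the same data as a nonzero linear functional on the quotient $\BB_{[\underline d,\overline d]}/T$, so the proposition amounts to showing that the image of $\facet(f,\tau)$ in this quotient has codimension exactly one.

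A direct count, using $f^+_\tau-f^-_\tau=f^+_{\tau+1}-f^-_{\tau+1}=1$, $f^+_i=f^-_i$ for $i\ne\tau,\tau+1$, and the chain-length formula $|\Delta_-|=1+\sum_i(f^-_i-\underline d_i)$, gives
$$
\dim\bigl(\BB_{[\underline d,\overline d]}/T\bigr)=\sum_{i}(f^+_i-\underline d_i)=|\Delta_-|+1,
$$
where $\Delta_-=\{d\in\Delta:d\leq f^-\}$. Every pure Betti table $\pi^d$ with $d\in\Delta_+=\{d\in\Delta:d\geq f^+\}$ already lies in $T$, so the image of $\facet(f,\tau)$ in the quotient is spanned by the images of the $|\Delta_-|$ tables $\pi^d$ with $d\in\Delta_-$. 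Existence and uniqueness of the upper hyperplane $U$ therefore follow from a single claim: these $|\Delta_-|$ images are linearly independent in the quotient.

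To prove the linear independence I would order $\Delta_-=\{d^{(0)}<d^{(1)}<\cdots<d^{(s)}=f^-\}$ and let $i_{k-1}$ denote the coordinate in which $d^{(k)}$ and $d^{(k-1)}$ differ. The natural pivot for $\pi^{d^{(k)}}$ is the position $(i_{k-1},d^{(k)}_{i_{k-1}})$ for $k\geq 1$, and $(\tau,\underline d_\tau)$ for $k=0$ (which lies in the quotient because $\underline d_\tau\leq f^-_\tau=f^+_\tau-1$). The Herzog--K\"uhl formula guarantees $\pi^{d^{(k)}}$ is nonzero at its pivot, while $d^{(j)}_{i_{k-1}}\leq d^{(k-1)}_{i_{k-1}}<d^{(k)}_{i_{k-1}}$ for all $j<k$ guarantees that no earlier $\pi^{d^{(j)}}$ has support there. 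When all these natural pivots lie in the quotient one gets an upper-triangular submatrix with nonzero diagonal and linear independence is immediate.

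The main obstacle is the exceptional case $i_{k-1}\notin\{\tau,\tau+1\}$ with $d^{(k)}_{i_{k-1}}=f^-_{i_{k-1}}=f^+_{i_{k-1}}$, so that the natural pivot falls outside the quotient. In this case coordinate $i_{k-1}$ is frozen at $f^+_{i_{k-1}}$ from step $k$ onwards, so no later $\pi^{d^{(j)}}$ contributes at that column in the quotient; the earlier rows whose natural pivots sit in that same column do so at strictly smaller values, and row-reducing $\pi^{d^{(k)}}$ against them produces a new leading entry at the column where the chain next moves. A cleaner way to see the independence is to note that the ratios $\pi^{d^{(k)}}_{i,d^{(k)}_i}/\pi^{d^{(k)}}_{i',d^{(k)}_{i'}}$ at positions shared by several $d^{(k)}$ are genuinely different (a direct calculation from the Herzog--K\"uhl formula), which prevents $\pi^{d^{(k)}}$ from being a combination of the earlier rows. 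Once the linear-independence claim is in hand, the one-dimensional orthogonal complement yields the unique upper hyperplane $U$; the lower hyperplane is produced by the dual argument, interchanging $f^+\leftrightarrow f^-$ and $\Delta_-\leftrightarrow\Delta_+$.
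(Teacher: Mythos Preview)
Your dimension-count strategy is different from the paper's, which is purely constructive. The paper first invokes Boij--S\"oderberg's Proposition~2.12 to observe that, within the Herzog--K\"uhl subspace, the supporting hyperplane depends only on the pair $(f^-,f^+)$ and not on the particular chain $\Delta$; this lets them treat ``$\facet(f,\tau)$'' as containing \emph{all} pure tables with degree sequence $\le f^-$. They then build the functional directly: set all coefficients at positions $(i,j)$ with $j\ge f^+_i$ to zero, fix the two coefficients at $(\tau,f^-_\tau)$ and $(\tau+1,f^-_{\tau+1})$ so that the functional vanishes on $\pi^{f^-}$, and then walk \emph{down} an infinite chain below $f^-$, solving at each step for exactly one new coefficient. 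Uniqueness is immediate from the construction; no explicit linear-independence argument is needed.

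Two remarks on your approach. First, your restriction to $\BB_{[\underline d,\overline d]}$ leaves a gap: the hyperplane is asserted to be unique in all of $\BB$, and a functional vanishing on $T$ still has unconstrained coefficients at positions $(i,j)$ with $j<\underline d_i$. The paper fills this by letting the chain run infinitely far below $f^-$ and using Proposition~2.12 for chain-independence; your argument would need the same ingredient (or the observation that your count is compatible as $\underline d$ decreases). Second, your ``main obstacle'' is self-inflicted. If instead of pivoting $\pi^{d^{(k)}}$ at the position $(i_{k-1},d^{(k)}_{i_{k-1}})$ where it differs from its \emph{predecessor}, you pivot at $(i_k,d^{(k)}_{i_k})$ where it differs from its \emph{successor} (and pivot $\pi^{f^-}$ at $(\tau+1,f^-_{\tau+1})$), then $d^{(k)}_{i_k}<d^{(k+1)}_{i_k}\le f^-_{i_k}\le f^+_{i_k}$ always lands strictly below the boundary of $T$, the matrix is lower-triangular with nonzero diagonal, and the obstacle disappears. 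This downward orientation is exactly what the paper's inductive step does, which is why their construction never meets the difficulty you describe.
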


We will call the equation of the upper hyperplane the \emph{upper equation} of the facet.

\begin{proof}
Proposition 2.12 of \bs \cite{BS} shows that the only thing that really matters
about $\facet(f,\tau)$ is the pair of degree sequences $f^-,f^+$, in the sense that
within the linear space defined by the Herzog-K\"uhl equations, the same linear
functionals vanish on all facets not containing $f$ but containing  $f^-,f^+$. 
Thus  $\facet(f, \tau)$ contains 
all pure Betti diagrams coming from degree sequences
 $\le f^-$ or $\ge f^+$ in the partial order. 
Using this we can construct the equation of the upper hyperplane
inductively
as follows. 

We identify linear functions on $\BB$ 
with (possibly infinite) tables of integers with $n+1$ columns (See Example \ref{Ex1}). 
Choose a maximal chain in $\BB$ containing $f^-, f, f^+$. Using this chain,
we will construct the
table corresponding to the linear function vanishing on the facet, which
we call the facet equation.
We begin by putting zeros in all the entries of the table corresponding to
degree sequences that are  $\geq f^+$, that is, we put
a zero in column $i$ and each row with index $\geq f^+_i-i$ for each $i=0\dots n$. Suppose that
the Betti numbers corresponding to $f^-$ are $\beta_0,\dots, \beta_n$.
In column $\tau$, row $f^-_\tau-\tau$ we put $\beta_{\tau+1}$ and in column $\tau+1$,
row $f^-_{\tau+1}-(\tau+1)$ we put $-\beta_\tau$; this ensures that our functional
will vanish on the Betti tables corresponding to $f^-$ as well as on that
corresponding to $f^+$, and will be positive on that corresponding to $f$.

Whatever we put in the other entries of the table, this functional will vanish on every pure Betti
table with degree sequence $d \ge f^+$. We can now solve for the remaining
(infinitely many) coefficients inductively. We start with $f^-$ and at each stage,
 we choose the next smaller degree sequence in our chain.
The corresponding Betti table contains only one nonzero entry in the region that is not yet determined,
so the vanishing condition allows us to solve uniquely for the corresponding element
of the table we are constructing.

The same idea could be used with
the roles of $f^-$ and $f^+$ reversed to produce the equation of the lower hyperplane.
\end{proof}

The entries in rows $\le f^-_i-i$ in the table corresponding to the upper equation will
be rather complicated (in the upper equation of Example \ref{Ex1}, for instance, the
entries are proportional to values of the polynomial $(z-4)(z-3)z(z+6)(z+7)(z+9)$;
see the proof of Theorem \ref{main}.) However there is one easy pattern:

\begin{corollary}\label{diagonals}
Let $b_{i,j}$ be the coefficient of $\beta_{i,j}$ in the table representing the upper equation of 
a  $\facet(f,\tau)$. If $j<f_i$ then $b_{i+1,j}=-b_{i,j}$. 
\end{corollary}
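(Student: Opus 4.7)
My plan is to exploit the uniqueness asserted by Proposition~\ref{upper equation} by exhibiting an explicit functional with the required vanishing properties, and then to read the diagonal relation directly off its closed form. Set
\[
P(z)\;:=\;\prod_{l\neq \tau,\tau+1}(z-f_l),
\]
a polynomial of degree $c-1$, and define a candidate table $b^*$ by $b^*_{i,j}=(-1)^iP(j)$ if $j<f^+_i$ and $b^*_{i,j}=0$ if $j\geq f^+_i$.

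If $b^*$ defines the upper hyperplane, then by uniqueness the upper equation is a nonzero scalar multiple of $b^*$, and the corollary is immediate: for $j<f_i$ one has $j<f^+_i$ and also $j<f_{i+1}\leq f^+_{i+1}$, so both $(i,j)$ and $(i+1,j)$ lie in the lower region of $b^*$, giving $b^*_{i+1,j}=(-1)^{i+1}P(j)=-b^*_{i,j}$.

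By construction, $b^*$ annihilates every table concentrated in degrees $\geq f^+$. By Proposition~2.12 of \cite{BS}, the facet $\facet(f,\tau)$ is spanned by pure Betti tables whose degree sequences satisfy $d\leq f^-$ or $d\geq f^+$; the latter are killed trivially, and for the former, applying the Herzog-K\"uhl identity $\beta^d_i=\lambda_d(-1)^i/\prod_{l\neq i}(d_l-d_i)$ to the pure table of $d$ gives
\[
\sum_i b^*_{i,d_i}\beta^d_i\;=\;\lambda_d\sum_{i:\,d_i<f^+_i}\frac{P(d_i)}{\prod_{l\neq i}(d_l-d_i)}.
\]

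The hard part will be verifying that this sum vanishes, and this is where the specific roots of $P$ pay off. For $d\leq f^-$, the equality $d_i=f^+_i$ forces $f^-_i=f^+_i$, which only happens when $i\neq\tau,\tau+1$, and in that case $d_i=f_i$, so $P(d_i)=P(f_i)=0$ by the very definition of $P$. Hence the indicator may be dropped, and the displayed expression becomes the full Lagrange-interpolation residue $\lambda_d(-1)^c\sum_i P(d_i)/\prod_{l\neq i}(d_i-d_l)=\lambda_d(-1)^c[z^c]P(z)$, which is $0$ since $\deg P=c-1$. Nonvanishing of $b^*$ (for instance $b^*_{\tau,f_\tau}=(-1)^\tau P(f_\tau)\neq 0$ since $f$ is strictly increasing) then confirms that $b^*$ genuinely cuts out a hyperplane, and the identification with the upper equation concludes the proof.
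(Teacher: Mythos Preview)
Your argument is correct, and it takes a genuinely different route from the paper's own proof. The paper argues structurally: the lower equation is identically zero in the region above the positions of $f^-$, and the upper equation differs from the lower one only by a linear combination of Herzog--K\"uhl equations; since each Herzog--K\"uhl equation (being the vanishing of a value of the Hilbert polynomial) has the form $\sum_i(-1)^i\beta_{i,j}\cdot p_S(d-j)$, its table has constant absolute value and alternating sign along each diagonal of fixed $j$, and the claimed relation follows. No explicit formula for $b_{i,j}$ is ever written down.

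By contrast, you produce a closed form for the upper equation and verify it directly. Your candidate $b^*_{i,j}=(-1)^iP(j)$ for $j<f^+_i$ is exactly the formula the paper alludes to but does not prove when it remarks that the entries are ``proportional to values of the polynomial $(z-4)(z-3)z(z+6)(z+7)(z+9)$'' in Example~\ref{Ex1}. The verification via the divided-difference identity $\sum_i P(d_i)/\prod_{l\neq i}(d_i-d_l)=[z^c]P=0$ is clean, and your observation that the excluded terms with $d_i=f^+_i$ force $i\neq\tau,\tau+1$ and hence $P(d_i)=0$ is the key point that makes the argument go through. One minor remark: you do not really need to invoke \cite[Proposition~2.12]{BS}; since $\Delta$ is a chain, every vertex of $\facet(f,\tau)=\Delta\setminus\{f\}$ automatically has degree sequence $\leq f^-$ or $\geq f^+$, which is all you use. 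The payoff of your approach is that it yields strictly more than the Corollary---a closed formula for the upper equation itself---whereas the paper's argument explains more transparently the role of the Herzog--K\"uhl relations.
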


The Corollary says that in a table representing an upper facet equation,
the diagonals from lower left to upper right have zeros in the positions below those
corresponding to $f$, while above $f$
 the entries are of the same absolute value but alternating sign. This behavior
can be seen clearly in Example \ref{Ex1}.

\begin{proof} The lower equation has all zeros above the positions corresponding to $f^-$.
The upper equation differs from it
by a linear combination of Herzog-K\"uhl 
equations. These equations, in the case of modules of finite length that
we are considering, reduce to the equations saying that the
 Hilbert polynomial $p_M$ of
a module $M$ with the given Betti table is identically zero. 
Hilbert himself expressed $p_M$ in terms of the Betti table as
$$
p_M(d)=\sum_j p_S(d-j) \sum_i (-1)^i \beta_{i,j}(M)
$$ 
where $p_S(t)={t+n-1\choose n-1}$, regarded as a polynomial in $t$,
is the Hilbert polynomial of $S$.  To say that this polynomial of degree
$n-1$ is identically zero is to say that it has $n$ consecutive zero values,
and this is expressed by the vanishing of linear functions given by
 tables whose entries along the diagonals have constant absolute value
and alternating signs.
\end{proof}

\begin{example}\label{Ex1} The
 $\facet((-4,-3, 0,2,4,6,7,9), 3)$
involves the chain of degree sequences
$
f^-=(-4,-3, 0,2,3,6,7,9),\,f=(-4,-3,0, 2,4,6,7,9)$ and $f^+=(-4,-3,0,3,4,6,7,9)
$
 indicated
 by the diagram
\scriptsize
$$
\begin{matrix}
\vdots&\vdots&\vdots&\vdots&\vdots& \vdots &\vdots& \vdots &\cr
\cdot &\cdot & \cdot  &\cdot & \cdot  & \cdot  &\cdot &\cdot &\cr
*&*&\cdot &\cdot & \cdot  & \cdot  &\cdot &\cdot &\cr
\cdot &\cdot &\cdot &\cdot & \cdot  & \cdot  &\cdot &\cdot &\cr
\cdot &\cdot &*&\cdot & \cdot  & \cdot  &\cdot &\cdot &\cr
\cdot &\cdot & \cdot &-&-&\cdot  & \cdot &\cdot &\cr
\cdot &\cdot & \cdot  &+&+&\cdot  &\cdot &\cdot  & \;  0\hbox{-th row} \cr
\cdot &\cdot & \cdot  &\cdot & \cdot &  *&*&\cdot &\cr
\cdot &\cdot & \cdot  &\cdot & \cdot  &  \cdot &\cdot &*&\cr
\cdot &\cdot & \cdot  &\cdot & \cdot  & \cdot  &\cdot &\cdot &\cr
\vdots&\vdots&\vdots&\vdots& \vdots &\vdots& \vdots &\vdots\cr
\end{matrix}
$$
\normalsize
where the $\cdot $ represents a zero entry and the $-,\,+$ and $*$ are in the 
positions where the Betti tables corresponding
to $f^-,f^+$ or both are nonzero.
The linear functional defining the upper hyperplane
of this facet,
 computed by the algorithm above, is given by the dot
 product with the matrix
\scriptsize
$$
\begin{matrix}
\vdots&\vdots&\vdots&\vdots&\vdots& \vdots &\vdots& \vdots &\cr
1755&-385&0& 0&66& -70 & 0&100&\cr
385&0& 0&-66& 70 & 0&-100&175&\cr
0^*&0^*&66& -70& 0&100&-175&189&\cr
0&0&70&0& -100 & 175 &-189&140&\cr
0&0&0^*&100& -175 & 189&-140&60&\cr
0&0& 0&175&-189&140& -60&0&\;  \cr
0&0& 0 &0^+&0^+&60 &0&0 & \cr
0&0& 0 &0& 0 &  0^*&0^*&44&\cr
0&0& 0 &0& 0 &  0&0&0^*&\cr
0&0& 0 &0& 0 &  0&0&0&\cr
\vdots&\vdots&\vdots&\vdots& \vdots &\vdots& \vdots& \vdots\cr
\end{matrix}
$$
\normalsize
The zeroes marked $0^*$ and $0^+$ and all zeroes below are forced by our construction. The remaing zeroes  are in diagonals containing a $0^*$; these are implied by
Corollary \ref{diagonals}.

\end{example}

\section{Linear Functions Non-Negative on Betti Tables of Free Resolutions}
\label{pos1}

In this section we introduce the bilinear form on Betti tables and 
cohomology tables that is the fundamental tool of this paper.

If $M$ is a finitely generated graded  $S=K[x_1,\dots,x_n]$-module,
we set
$h_\ell(M)=\dim(M_\ell)$, the value of the Hilbert function of $M$ at $\ell$.
 For any bounded complex $G$ of such modules, we set
$\chi(G)=\sum_i(-1)^i h_0(G_i)$.

Given a Betti table $\beta\in \BB$ and a cohomology table $\gamma\in \CC$
we define
$$
\langle \beta, \gamma\rangle = 
\sum_{\{i,j,k\mid j\leq i\}}
(-1)^{i-j}\beta_{i,k}\gamma_{j,-k}.
$$
If $E$ is a bounded complex of modules with cohomological 
indices increasing from 0,
$$
E: 0\to E^0 \to E^1\to \cdots 
$$
and $F$ is a free complex, then we write
$$
\langle \beta, E\rangle,\quad
\langle F, \gamma\rangle,\quad{\rm\ and\ }\quad
\langle F, E\rangle
$$
for
$$
\langle \beta, \gamma(E)\rangle,\quad
\langle \beta(F), \gamma\rangle,\quad{\rm\ and\ }\quad
\langle \beta(F), \gamma(E)\rangle,
$$ 
respectively.
If $F_i=\sum_kS(-k)^{\beta_{i,k}}$ then
$\sum_k\beta_{i,k}h_{-k}(H^j(E))= h_0(F_i\otimes H^j(E))$, so we 
can simplify the formula to
$$
\langle F,E\rangle = \sum_j \chi(F_{\geq j}\otimes H^j(E)).
$$
Thus the value of the functional $\langle -,E\rangle$ on $F$ is
given by the dot product of the Betti table of $F$ with the matrix
$$
\begin{matrix}
\vdots&&\vdots&&\vdots \\
h^0_0 && h^1_{1}-h^0_1 && h^2_{2}-h^1_{2}+h^0_2 && \cdots \\
h^0_1 && h^1_{2}-h^0_{2} && h^2_3-h^1_3+h^0_3 && \cdots \\
h^0_2 && h^1_3-h^0_3 && h^2_4-h^1_4+h^0_4                && \cdots\\
\vdots&&\vdots&&\vdots \\
\end{matrix}
$$
where $h^i_j$ denotes the dimension of the degree $j$ component of $H^i(E)$
and the entry $h^0_0$ is
in the $(0,0)$ place.

Since these definitions involve only the cohomology of $E$, we have 
$\langle\beta,  E\rangle=\langle \beta, H^*(E)\rangle$, where
$H^*(E)$ is interpreted as a complex having zero differential, and
similarly for $F$---we could set the differentials of $F$ equal to zero
without disturbing the definition.
The importance of the complex $E$ and the differentials in $F$
appears from the following, which is the main technical result of this paper.
\goodbreak

\begin{theorem}
\label{positivity}
If 
$$
E:\quad 0\to E^0\to\cdots\to E^n\to 0
$$
is a complex of graded free $S$-modules
 and
$F$ is a free resolution of a graded
$S$-module $M$ 
then 
\begin{enumerate}
\item $\langle F,E\rangle\geq 0$. 
\item Suppose in addition that $M$ and the modules
$H^j(E)$, for $j>0$ have finite length.
If
\begin{eqnarray*}
0&>&  \reg M+\reg E^0\hbox{, and}\\
0&>& \reg F_{j-1}+\reg H^j(E) \hbox{     for every $j>0$,}
\end{eqnarray*}
then
$\langle F,E\rangle = 0.$ 
\end{enumerate}
\end{theorem}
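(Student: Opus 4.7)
My plan is to analyze the degree-zero part of the double complex $F \otimes E$ of graded $S$-modules via its two spectral sequences. Since $F$ is a free resolution of $M$ and each $F_q$ is flat, the spectral sequence filtering by $F$-columns collapses to $H^p(M \otimes E)_0$ at $E_\infty$, while the one filtering by $E$-rows has $E_1^{p,q} = (F_q \otimes H^p E)_0$ and $E_2^{p,q} = \mathrm{Tor}^S_q(M, H^p E)_0$. Using that $F_{\geq j}$, after a shift, is a free resolution of the $j$-th syzygy $\mathrm{syz}^j M$, one obtains
\[
\chi(F_{\geq j} \otimes H^j E)_0 \;=\; \sum_{k \geq 0}(-1)^k h_0\bigl(\mathrm{Tor}^S_k(\mathrm{syz}^j M,\, H^j E)\bigr),
\]
so $\langle F, E \rangle$ becomes a doubly-indexed alternating sum of Tor dimensions. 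Combining this with the syzygy short exact sequences $0 \to \mathrm{syz}^{j+1}M \to F_j \to \mathrm{syz}^j M \to 0$ (which give $\mathrm{Tor}_k(\mathrm{syz}^j M, N) \cong \mathrm{Tor}_{k+j}(M, N)$ for $k \geq 1$), the goal is to rewrite $\langle F, E \rangle$ as the Euler characteristic of a cleverly truncated bicomplex whose total cohomology collapses into a manifestly non-negative quantity---heuristically, something like $\sum_j h_0(\mathrm{syz}^j M \otimes H^j E)$, which is what is consistently observed in small test cases.

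For part (2), the regularity hypotheses are tailored to force each summand to vanish in internal degree $0$. For a free module $A = \bigoplus_i S(-a_i)$ and a graded module $B$, one has $(A \otimes B)_0 = \bigoplus_i B_{-a_i}$; when $\reg A + \reg B < 0$ each $-a_i > \reg B$, so every $B_{-a_i}$ vanishes. Applying this with $A = F_k$ for $k \leq j-1$ (where $\reg F_k \leq \reg F_{j-1}$ in a minimal resolution) and $B = H^j E$, the hypothesis $\reg F_{j-1} + \reg H^j E < 0$ forces $(F_k \otimes H^j E)_0 = 0$ for every $k \leq j-1$. Using the short exact sequences of dumb truncations $0 \to F_{\geq j} \to F_{\geq j-1} \to (F_{j-1}\text{ placed in position }j-1) \to 0$ and iterating, this propagates to the vanishing of $\chi(F_{\geq j} \otimes H^j E)_0$ for each $j > 0$; the $j = 0$ case follows from the separate hypothesis $\reg M + \reg E^0 < 0$ together with $H^0 E \subseteq E^0$.

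The main obstacle is the non-negativity in part (1). The bilinear form depends only on the Betti and cohomology tables, yet positivity genuinely requires the actual complex structures---with trivial differentials, the theorem would fail. Identifying a truncated or filtered bicomplex whose degree-$0$ Euler characteristic equals $\langle F, E \rangle$ and whose total cohomology decomposes into manifestly non-negative dimensions is the key technical challenge, and precisely the place where the interaction between the $F$- and $E$-differentials enters essentially.
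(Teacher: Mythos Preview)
Your setup---the double complex $F\otimes E$ and its two spectral sequences---is exactly the paper's, and you have correctly identified that the horizontal-first sequence collapses to $H^*(M\otimes E)$. But the proposal is not a proof: part~(1) is left as an acknowledged open challenge, and your argument for part~(2) is flawed.

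\textbf{Part (1): the missing idea.} The key step you are looking for is not a Tor reformulation but a simple \emph{truncation}. On the $E_1$ page of the vertical-first spectral sequence one has $F_\ell\otimes N^j$ with $N^j=H^j(E)$; restrict to the ``lower triangle'' of terms with total cohomological degree $j-\ell\le 0$. All the incoming differentials of the full spectral sequence land in this triangle, so the spectral sequence restricts to one, call it $\mathbf{E}'$, on the truncation. By construction $\langle F,E\rangle$ is the Euler characteristic of $^{1}\mathbf{E}'$ in degree~$0$, hence also of $^{\infty}\mathbf{E}'$. Now $^{\infty}\mathbf{E}'$ is zero in positive total degree (the truncation has nothing there) and zero in negative total degree (because the other spectral sequence shows $H^{<0}_{\mathrm{tot}}(F\otimes E)=0$). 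Thus $^{\infty}\mathbf{E}'$ is concentrated in total degree~$0$, so $\langle F,E\rangle=h_0(^{\infty}\mathbf{E}')\ge 0$. Your heuristic formula $\sum_j h_0(\mathrm{syz}^jM\otimes H^jE)$ is not what one obtains; the actual value is the dimension of a subquotient of the diagonal $\sum_\ell F_\ell\otimes N^\ell$ that has no such closed description.

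\textbf{Part (2): the argument does not work.} You try to show each summand $\chi(F_{\ge j}\otimes H^jE)_0=0$ separately. Your reduction, via $(F_k\otimes H^jE)_0=0$ for $k\le j-1$, only yields $\chi(F_{\ge j}\otimes H^jE)_0=\chi(F\otimes H^jE)_0$, and there is no reason for the latter to vanish (take $H^jE$ free to see this equals a value of the Hilbert function of $M$). Moreover, the theorem does not assume $F$ is minimal, so the claim $\reg F_k\le\reg F_{j-1}$ for $k\le j-1$ is unavailable. The paper instead uses the structure from part~(1): the spectral sequence differentials embed the quotient $(\sum_\ell F_\ell\otimes N^\ell)/K^0_\infty$ into $\sum_\ell F_\ell\otimes N^{\ell+1}=\sum_{j>0}F_{j-1}\otimes H^jE$, and $H^0_{\mathrm{tot}}(F\otimes E)\subset M\otimes E^0$, giving
\[
0\;\le\;\langle F,E\rangle\;\le\;h_0(M\otimes E^0)+\sum_{j>0}h_0\bigl(F_{j-1}\otimes H^jE\bigr).
\]
The two regularity hypotheses are tailored to kill exactly these two terms---one inequality per $j$, with no iteration needed.
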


\begin{example}\label{why free complex} 
Suppose that
\begin{align*}
E: \quad \cdots \to 0\to &E^j \to 0\to\cdots\\
&j&
\end{align*}
is 0 except in cohomological degree $j$,
and $F$ is the free resolution of a module $M$.
\begin{enumerate}
\item  If $E^j$ is free, then
$$
\langle F,E\rangle=\chi(F_{\geq j}\otimes E^j) = h_0({\rm Syz}_j(M) \otimes E^j).
$$
It is obvious that $\langle F,E\rangle\geq 0$ in this case.

\item If $E^j$ is not free then 
$
\langle F,E\rangle 
$ 
may be negative. For example,
if $j=0$, $E=E^j=K$, 
and $F$ is the Koszul complex resolving $K(1)$, 
then $\langle F,E\rangle = -\beta_{1,0}(K(1))= -n$. 
Thus we
cannot drop the condition that $E$ is a free complex
in Theorem \ref{positivity}. 

\item But if $j=n, \ E=E^j = K$, and $F$ is the minimal free resolution of $M$,
then 
$\langle F,E\rangle = \beta_{n,0}(M)\geq 0.$ This positivity could have been 
deduced from Theorem \ref{positivity}, since
in this case we can rewrite
$\langle F,E\rangle= \langle F,E'\rangle$, where $E'$ is a Koszul complex.
\end{enumerate}
\end{example}

\begin{proof}[Proof of Theorem \ref{positivity}]
Consider the double complex $F\otimes E$:
\scriptsize
$$
\xymatrix{ & 0 & 0 &  & 0 & \cr
0 &\ar[l] \ar[u] F_0\tensor E^n &  \ar[l] \ar[u] F_1\tensor E^n & \ar[l]\ldots & \ar[l] \ar[u]  F_n \tensor E^n & \ar[l]  0 \cr
& \ar[u] \vdots & \ar[u] \vdots &  & \ar[u] \vdots & \cr
0& \ar[l] \ar[u] F_0\tensor E^1 &  \ar[l] \ar[u] F_1\tensor E^1 &\ar[l] \ldots & \ar[l] \ar[u]  F_n \tensor E^1 & \ar[l]  0 \cr
0 &\ar[l] \ar[u] F_0\tensor E^0 &  \ar[l] \ar[u] F_1\tensor E^0 &\ar[l] \ldots & \ar[l] \ar[u]  F_n \tensor E^0 & \ar[l]  0 \cr
 & \ar[u] 0 & \ar[u] 0 &  & \ar[u] 0 & \cr}
$$
\normalsize
and the two spectral sequences converging to its total homology.
Since $F$ is a resolution, the spectral sequence beginning with
$H_{\rm vert}H_{\rm hor}(F\otimes E)$ degenerates at that point, and we see
that the homology of the total complex $F\otimes E$ is $H^*(M\otimes E)$. In particular, 
$$
H^0_{\rm tot}(F\otimes E)= \ker(M\otimes E^0\to M\otimes E^1)
$$
and $H^j_{\rm tot}(F\otimes E)=0$ for $j<0$.

Now consider the other spectral sequence of the double complex, 
$$
{\bf E}: \qquad ^1{\bf E}=H_{\rm vert}(F\otimes E) \Rightarrow H_{\rm tot}(E\otimes F).
$$
We work in cohomological indices, considering $F_\ell$ to be in cohomological degree $-\ell$.
To simplify notation, set $N^j:=H^j(E)$ and write $N$ for $\sum_j N_j$.

Because each $F_k$ is free, 
$\left( H_{\rm vert}(F\otimes E)\right)^{(j, -\ell)}= F_\ell\otimes N^j$
so the 
value of the functional $\langle F,E\rangle$ is the Euler characteristic of  
the truncation of  $H_{\rm vert}(F\otimes E)$ consisting of the terms $F_\ell\otimes N^j$ of total
cohomological degree $j-\ell\leq 0$. The differential from $F$ makes
this into a complex that we will call $(F\otimes N)^{\leq 0}$, 
as in the following diagram:
\scriptsize
\begin{diagram}
&&&&&&0 &\lTo&F_n \tensor N^n&\lTo& 0 \\
&&&&&\Ddots &&\Ddots \\
{\bf ^1E'}=(F\otimes N)^{\leq 0}&&&&\Ddots &&\Ddots &&\vDots\\
&&&0 &\lTo& F_1\tensor N^1 & \lTo & \cdots & F_n \tensor N^1&\lTo& 0 \\
&0 &\lTo& F_0\tensor N^0 & \lTo& F_1\tensor N^0&\lTo &\cdots & F_n \tensor N^0&\lTo& 0 
\end{diagram}
\normalsize
\goodbreak
The differentials in {\bf E}
go from the  diagonal of cohomological degree $-\ell-1$, represented on page one by
$\sum_\ell F_{i+\ell+1} \otimes N^i$ to the diagonal of cohomological degree $-\ell$, represented on page 1 by 
$\sum_\ell F_{i+\ell} \otimes N^i$. Thus {\bf E} induces a spectral sequence ${\bf E'}$ whose first page is 
${\bf ^1E'}=(F\otimes N)^{\leq 0}$ as in the diagram above. 
Since taking homology preserves the Euler characteristic,
we see that $\langle F,E\rangle$ is the Euler characteristic of the infinity term $^\infty{\bf E'}$.

Since $^1{\bf E'}$ is zero in strictly positive cohomological degrees,
so is $^\infty{\bf E'}$. Since the total
complex of $F\otimes E$ has no homology in negative cohomological degrees,
the same is true of $(F\otimes E)^{\leq 0},$ so
 $^\infty{\bf E'}$ will also be zero in strictly negative cohomological degrees.  Thus
 the value of the functional
$\langle F,E\rangle$ is simply the dimension of the degree zero part of $^\infty{\bf E'}$.
In particular, it is non-negative, proving part 1) of the Theorem.

To prove the part 2) of the Theorem,  we compare $ {\bf E'}$ with {\bf E}.
We write $^td'$ and $^td$ for the 
differentials of the two sequences on the $t$-th page. 
The second differential 
$^2d$ goes from  the kernel of $^1d$ to
$F\otimes N$ modulo the image of $^1d$. Thus there is a well-defined
submodule of $F\otimes N$ that we may denote by
 $(\image\ {}^1d)+(\image\ {}^2d)$ even though $\image\ {}^2d$ is not itself a
 submodule of $F\otimes N$. Similarly,  it makes sense to speak of the submodules
 $$
 \sum_{t\geq 1}{}\image\  {}^td
  \quad\subset \quad 
  \bigcap_{t\geq 1} \ker\ {}^td
   \quad\subset \quad
    F\otimes N.
 $$

To simplify the notation in this and the next proof  set
\begin{eqnarray*}
K^i_t &=& (\sum_\ell F_\ell \otimes N^{\ell+i})\cap \bigcap_{s\leq t} \ker\ {} ^sd \\
I^i_t &=& (\sum_\ell F_\ell \otimes N^{\ell+i})\cap \sum_{s\leq t} \image\ {}^sd.
\end{eqnarray*}
for $1\leq t\leq \infty$.
Further, let $K^i_0=\sum_\ell F_\ell \otimes N^{\ell+i}$ and
$I^i_0 = 0$.

The spectral sequence {\bf E} gives a filtration of 
$ H_{\rm tot}(F\otimes E)$ whose associated graded module is
 $
^\infty{\bf E} =K_0^\infty/I_0^\infty.
$
Since the differentials $^td$ coming into the cohomological degree 0 diagonal terms
of {\bf E} coincide with the $^td'$, we have
$
^\infty{\bf E'}=
({ \sum_\ell F_\ell\otimes N^\ell})/
{I^0_\infty}.
$
Thus 
$$
\langle F,E\rangle =
h_0\bigl( ^\infty{\bf E'}
\bigr) = 
h_0\bigl(H^0_{\rm tot}(F\otimes E)\bigr)+
h_0\bigl(\frac{ \sum_\ell F_\ell\otimes N^\ell}
{K_\infty}).
$$

The module 
$
(\sum_\ell F_\ell\otimes N^\ell)/
{K_\infty} 
$
is filtered by the submodules $K_t/K_\infty$.
The module $\sum_\ell F_\ell\otimes N^{\ell+1}$ is filtered by the $I^1_t$, and
the differential $^{t+1}d$ identifies the factor
$K^0_t/K^0_{t+1}$ 
 with $I^1_{t+1}/I^1_t$. 
  As already noted, we have 
$H^0_{\rm tot}(F\otimes E) \subset M\otimes E^0$.

Together with part 1) of the Theorem, these observations yield
 $$
0\leq  \langle E,F\rangle \leq h_0(M\otimes E^0) + h_0(\sum_\ell F_\ell\otimes N^{\ell+1}).
 $$
Under the hypotheses of of part 2) of the Theorem, 
both $M\otimes E^0$ and $\sum_\ell F_\ell\otimes N^{\ell+1}$
are zero in degree 0,
proving the vanishing of $\langle E,F\rangle$ in this case.
\end{proof}

\section{Linear Functions Non-Negative on Betti Tables of Minimal Free Resolutions}
\label{pos2}
In this section we construct a family of bilinear functions on Betti tables
and cohomology tables that are
modifications of  $\langle  \beta, \gamma \rangle$. When $\gamma=\gamma(E)$ 
for a free complex $E$ and $\beta=\beta(F)$ for a minimal free
resolution, the result  is non-negative, but it can be negative
when $F$ is non-minimal. Since the construction is somewhat opaque, we
begin by explaining it from several points of view.

Recall from Section \ref{pos1} that 
$$
\langle \beta, \gamma\rangle = \sum_{\{i,j,k\mid j\leq i\}}
(-1)^{i-j}\beta_{i,k}\gamma_{j,-k}.
$$
Given a cohomological index $\tau$ and a degree bound $c$ we set
\begin{eqnarray*}
\langle \beta, \gamma\rangle_{c,\tau} &=&
\sum_{\{i,j,k\mid j\leq i \hbox{\scriptsize\ and } (j<\tau \hbox{\scriptsize\ or } j \leq i-2)\}}
(-1)^{i-j}\beta_{i,k}\gamma_{j,-k}
\cr
&+&
\sum_{\{i,j,k,\epsilon \mid 0\leq \epsilon\leq 1,\ j=\tau,\ i=j+\epsilon,\ k\leq c+\epsilon \}}
(-1)^{i-j}\beta_{i,k}\gamma_{j,-k}.
\end{eqnarray*}
 
It may help the reader to see the coefficient of each $\beta_{i,i+\ell}$ (the entry in column $i$, row $\ell$)
 in the functional $\langle \beta, \gamma, \rangle_{\tau,c}$
explicitly. The formula depends on which region of the Betti table the index $(i, \ell)$ falls into,  as follows:
 $$
 \begin{matrix}
 & \vline&i<\tau&&\tau \leq i\leq \tau+1 && \tau+1<i \cr
 \hline
 \ell \leq c-\tau& \vline&U && V' &&  W\cr
 \ell > c-\tau &\vline&U && V  && W
\end{matrix}
 $$
%
Recall that in the row labelled $\ell$ and the column labelled $i$ the entry of the Betti table
is $\beta_{i, i+\ell}.$ 
The coefficient of $\beta_{i,i+\ell}$ in the unmodified functional
$\langle \beta, \gamma, \rangle$ is $\sum_{j\leq i} (-1)^{i-j} \gamma_{j, -i-\ell}$. This differs
from the coefficient in the modified functional  only
in the range of $j$ in the summation. Region by region, as in the diagram above,
the coefficient of $\beta_{i,i+\ell}$ is:
\begin{enumerate}
\item[$U:$] $\sum_{\{j\mid j\leq i\}} (-1)^{i-j} \gamma_{j, -i-\ell}$---as for the 
unmodified functional $\langle \beta, \gamma, \rangle$.
\item[$V:$]$\sum_{\{j\mid j\leq \tau-1\}}(-1)^{i-j}\gamma_{j, -i-\ell}$
\item [$V'$:] $\sum_{\{j\mid j\leq\tau\}}(-1)^{i-j}\gamma_{j, -i-\ell}$
\item[$W$:] $\sum_{\{j\mid j\leq i-2\}} (-1)^{i-j} \gamma_{j, -i-\ell}$
\end{enumerate}

We were originally led to consider the functionals
$\langle \beta(F), \gamma(E)\rangle_{c,\tau}$ by the inspection of the upper facet
equations in many examples. The reader may also understand the idea better
from Examples  \ref{Example2} and \ref{7.3}.

We can follow the modification of the functional through the proof of Theorem
\ref{positivity} as well. The formulas  say that, applied to a free resolution 
$F$, the value $\langle F,E \rangle_{c,\tau} := \langle \beta(F), \gamma(E)\rangle_{c,\tau}$
is constructed from $\langle F,E\rangle$ by changing the upper parts
of the two leftmost diagonals in the complex ${\bf ^1E'}$ in the proof ofTheorem \ref{positivity}.
The following
 diagram shows the location of the changes we make
  to obtain the new function when $n=7$, $\tau =3$, the case of Example \ref{Ex1}:
\scriptsize
$$
\begin{matrix}
0&0&0&0&0&0&0&Z\cr
0&0&0&0&0&0&Z&Z\cr
0&0&0&0&0&Z&Z&*\cr
0&0&0&0&Z&Z&*&*\cr
0&0&0&C&C&*&*&*& \; \tau \hbox{-th row}\cr
0&0&*&*&*&*&*&*\cr
0&*&*&*&*&*&*&*\cr
*&*&*&*&*&*&*&*\cr
 \end{matrix}
$$
\normalsize
The entries marked $*$ in this diagram correspond to terms that are equal to 
 $F_i\otimes N^j$ as before; the entries $Z$ are set equal to zero;
 and the entries $C$ are altered by a degree restriction. The more general construction
given in the proof of Theorem \ref{modified} provides a more
conceptual picture of what is going on. 

\begin{theorem}\label{modified}
If $\beta$ is the Betti diagram of a minimal free resolution of a
graded $S$-module, then
$\langle \beta, E\rangle_{c,\tau} \geq 0$.
\end{theorem}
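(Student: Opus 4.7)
The plan is to adapt the spectral sequence argument from the proof of Theorem \ref{positivity}, replacing the complex $(F\otimes N)^{\leq 0}$ by a carefully modified subcomplex whose degree-$0$ Euler characteristic computes $\langle F,E\rangle_{c,\tau}$ and whose non-negativity is inherited from the original argument. Here $N=H^*(E)$, as before, is viewed as a bigraded module.

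First I would interpret the $*$/$Z$/$C$ diagram displayed after the definition of $\langle\cdot,\cdot\rangle_{c,\tau}$ as a recipe for building a modified complex $\widetilde{(F\otimes N)}_{c,\tau}$. The $*$ entries correspond to the usual summands $F_i\otimes N^j$ (with $j\leq i$, total cohomological degree $j-i\leq 0$); the $Z$ entries, occurring in positions $i\geq \tau+2$ with $j\in\{i-1,i\}$, are removed; and the $C$ entries, in the transition row $i\in\{\tau,\tau+1\}$, are replaced by $F_i^{\leq c+i-\tau}\otimes N^j$, where $F_i^{\leq c+i-\tau}$ denotes the sub-summand of $F_i$ generated in degrees at most $c+i-\tau$. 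A straightforward region-by-region check of the contribution of each Betti number $\beta_{i,k}(F)$ to the degree-$0$ Euler characteristic of this new complex then yields the identity $\chi_0\bigl(\widetilde{(F\otimes N)}_{c,\tau}\bigr) = \langle F,E\rangle_{c,\tau}$.

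The crucial point, where minimality of $F$ enters, is that $\widetilde{(F\otimes N)}_{c,\tau}$ is genuinely a subcomplex of $(F\otimes N)^{\leq 0}$, i.e.\ closed under the induced differential. Minimality guarantees that the differential $\partial_F$ has all entries in the maximal ideal $\mathfrak m$, so it strictly increases the degrees of generators: if $g$ is a minimal generator of $F_{i+1}$ of degree $d$, then $\partial_F(g)\in\mathfrak m\cdot F_i$ lies in the sub-summand of $F_i$ generated in degrees strictly less than $d$. In particular $\partial_F(F_{i+1}^{\leq c+(i+1)-\tau})\subseteq F_i^{\leq c+i-\tau}$, so the degree cutoffs built into the $C$-region are preserved by the differential; the deletions in the $Z$-region likewise cause no obstruction because the vanishing terms have nowhere positive to map. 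For non-minimal $F$ these containments can fail, which explains why $\langle F,E\rangle_{c,\tau}$ may be negative on non-minimal resolutions.

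Finally, I would re-run the spectral sequence comparison from the proof of Theorem \ref{positivity} with $\widetilde{(F\otimes N)}_{c,\tau}$ in place of $(F\otimes N)^{\leq 0}$. Since the only modifications are deletions in strictly positive cohomological total degrees (where the $\infty$-page was already zero) and a compatible degree truncation along the transition row, the same argument shows that the $\infty$-page of the associated spectral sequence is a subquotient of a module supported in non-positive cohomological total degrees, and hence its degree-$0$ Euler characteristic is non-negative. I expect the main obstacle to be the verification that the degree cutoffs in the $C$-region are respected by the higher-page differentials of the spectral sequence, which ultimately reduces to the minimality-based containment above; once that is checked, the rest of the argument parallels Theorem \ref{positivity} verbatim.
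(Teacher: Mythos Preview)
There is a genuine gap: the modified diagram $\widetilde{(F\otimes N)}_{c,\tau}$ is \emph{not} a subcomplex of $(F\otimes N)^{\le 0}$ under the page-1 differential $^1d=\partial_F\otimes 1$. Your justification for the $Z$-region (``the vanishing terms have nowhere positive to map'') checks the wrong direction. The obstruction is what maps \emph{into} the deleted terms: for each $j>\tau$ the entry $F_{j+2}\otimes N^j$ is a $*$ (kept in full) and $^1d$ carries it to $F_{j+1}\otimes N^j$, which is a $Z$ (replaced by $0$). Since $\partial_F\colon F_{j+2}\to F_{j+1}$ is typically nonzero, the would-be subcomplex is not closed under $^1d$. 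A quotient-complex repair fails for the dual reason in the $C$-row: $\partial_F(F_{\tau+1}^{>c+1})$ need not land in $F_\tau^{>c}$. Note also that the $Z$-deletions sit on the diagonals of total cohomological degree $0$ and $-1$, not in strictly positive degree, so the premise of your last paragraph (that the $\infty$-page was already zero there) is false---those are precisely the diagonals carrying all the content in the proof of Theorem~\ref{positivity}.

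The paper's argument avoids this by \emph{not} making the modified diagram into a complex. It replaces only the two leading diagonals $A=\sum_\ell F_\ell\otimes N^\ell$ and $B=\sum_\ell F_{\ell+1}\otimes N^\ell$ by submodules $A',B'$, and proves a standalone lemma (Lemma~\ref{technical}): if every spectral-sequence differential $^{t+1}d$ sends $B'\cap K^{-1}_t$ into $(A'+I^0_t)/I^0_t$, then the modified Euler characteristic is $\ge 0$. This is a direct diagram chase using the filtrations $I^0_t\subset A$ and $K^{-1}_t\subset B$ coming from the \emph{original} spectral sequence, not a re-run on a new complex. Minimality enters exactly once, for $t=0$, via the containment $\partial_F(F_{\tau+1}^{\le c+1})\subset F_\tau^{\le c}$ that you correctly identified; for $t\ge 1$ the hypothesis is automatic because $B'_\ell=0$ for $\ell>\tau$ while $A'_\ell=A_\ell$ for $\ell<\tau$.
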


\begin{proof}
To clarify the proof we formulate a more general result.
Suppose we are in the situation of Part (1) of Theorem \ref{positivity}, and 
that we are given submodules
$A'_\ell \subset F_\ell\otimes N^\ell$ and 
$B'_\ell \subset F_{\ell+1}\otimes N^\ell$
for each $\ell$. Set
\begin{eqnarray*}
A':=&\sum_\ell A'_\ell &\subset\  A:=\  \sum_\ell F_\ell\otimes N^\ell \cr
B':=&\sum_\ell B'_\ell  &\subset\  B:=\  \sum_\ell F_{\ell+1}\otimes N^\ell.
\end{eqnarray*}
Let $D$ denote the diagram of modules that
agrees with $^1{\bf E'}=(F\otimes N)^{\leq 0}$ except for the first two diagonals,
where the terms $F_\ell\otimes N^\ell$ and 
$F_{\ell+1}\otimes N^\ell$
have been replaced by the terms
$A'_\ell$ and
$B'_\ell$
respectively, so that $A$ and $B$ are replaced by $A'$ and $B'$ (and we
forget, for a moment, the maps). Set
$$
\chi (D):= \sum_{j\leq i} (-1)^{i-j} D_i^j = \langle F,E\rangle-h_0A+h_0A'+h_0B-h_0B'.
$$
Let $^td$ be the differentials of the spectral sequence
{\bf E}. With notation as in the proof of Theorem \ref{positivity},
the spectral sequence gives us sequences
of submodules $K^{-1}_t \subset B$ and $I^0_t \subset A$,
such that $^{t+1}d$ identifies $K^{-1}_t/K^{-1}_{t+1}$ with $I^0_{t+1}/I^0_t$.

\begin{lemma} \label{technical}
If 
$$
 {}^{t+1}d(B'\cap K^{-1}_t) \subset (A'+I^0_t)/I^0_t
$$ 
for all $t\geq 0$, then
$
\chi (D)\geq 0
$
\end{lemma}

\begin{proof}[Proof of Lemma \ref{technical}]
 Set 
\begin{eqnarray*}
\overline B &=& K^{-1}_\infty\oplus \bigoplus_{t\geq 0} \frac{K^{-1}_t}{K^{-1}_{t+1}}   \\
\overline A &=& \frac{\sum_\ell F_\ell \otimes N^\ell}{I^0_\infty}\oplus \bigoplus_{t\geq 0} \frac{I^0_{t+1}}{I^0_t}.
\end{eqnarray*}
Let $\alpha:\overline B \to \overline A$ be the map taking $K_\infty^{-1}$ to zero and
identifying $K^{-1}_t/K^{-1}_{t+1}$ with $I^0_{t+1}/I^0_t$ by means of $^{t+1}d$.

Our hypothesis shows that $\alpha$ induces a map of associated
graded modules $\overline{B'}\to \overline{A'}$ coming from the induced
filtrations. Thus we have a commutative diagram with exact rows and
columns
 $$
\xymatrix{    
0 & C \ar[l]         & \ar[l] \overline A          & \ar[l]_\alpha \overline  B       & \ar[l] K&\ar[l] 0 \cr
0 & C'\ar[l] \ar[u]  & \ar[l] \ar[u] \overline {A'}  &\ar[u] \ar[l] \overline {B'} & \ar[u]\ar[l] K'   &\ar[l] 0 \cr
    &                       &    0\ar[u]              &  0\ar[u]              &   0\ar[u]           &            \cr
}
 $$
where $C=(\sum_\ell F_\ell \otimes N^\ell)/I^0_\infty$,
$K=K^{-1}_\infty$, and $K',C'$ are the kernel and cokernel of the map induced by $\alpha$.
 The proof of Theorem \ref{positivity} shows that $\langle F,E\rangle=h_0C$,
 so $\chi D = h_0C-h_0\overline A +h_0\overline{A'} +h_0\overline B-h_0\overline{B'}$,
 and an easy diagram chase shows that this is equal to
 $(h_0K-h_0K')+h_0C' $. Since $K'\subset K$, this is non-negative as required.
 \end{proof}

Proof of  Theorem \ref{modified} continued: To obtain $\langle F, E\rangle_{c,\tau}$ we consider
$$
F_i^a=\begin{cases} 
F_i & \hbox{ if $i<\tau$} \cr
\sum_{j\le c} S(-j)^{\beta_{\tau,j}} & \hbox{ if $i=\tau$} \cr
0 & \hbox{ if $i>\tau$ }
\end{cases}
$$
and 
$$
F_i^b=
\begin{cases} 
F_i & \hbox{ if $i<\tau+1$} \cr
\sum_{j\le c+1} S(-j)^{\beta_{\tau+1,j}} & \hbox{ if $i=\tau+1$} \cr
0 & \hbox{ if $i>\tau+1$ }
\end{cases}.
$$
Let
$$
A'_\ell=F_\ell^a \tensor N^\ell \subset F_\ell
 \tensor N^\ell
 $$ 
 and 
 $$
 B'_\ell= F_{\ell+1}^b \tensor N^\ell\subset  F_{\ell+1}\tensor N^\ell.
 $$ 
 Since $F$ is minimal, the differential 
 of $F$ maps $F_{\tau+1}^b$ to $F_\tau^a$, and it follows that
 $A',B'$ satisfy the hypotheses of Lemma \ref{technical} for $^1d$. 
 For $t>1$ the hypotheses are trivially satisfied:  the map 
 $$
 ^{t+1}d:B'_\ell \cap K^{-1}_t\to F_{\ell-t+1}/(I^0_t\cap F_{\ell-t+1})
 $$
 has source = 0 for $\ell>\tau$, while for $\ell\leq \tau$ 
 $F_{\ell-t+1}=F^a_{\ell-t+1}$, so $A=A'$ in this component.
  Moreover the resulting diagram $D$ satisfies $\chi D=\langle F,E \rangle_{c,\tau}$.
 By Lemma \ref{technical} we have $\langle F,E\rangle_{c,\tau}\geq 0.$
 \end{proof}

 \begin{example}\label{Example2}
The upper and lower equation of the facet  $\facet((-1,0,2,3),1)$ have coefficients
as indicated in the following tables
\scriptsize
$$
U:\quad \begin{matrix}
\vdots&\vdots&\vdots&\vdots\cr
21&-12&5&0\cr
12&-5&0&3\cr
5&0&-3&4\cr
0^*&3&-4&3\cr
0&0^+&0^+&0^*&\cr
0&0&0&0\cr
0&0&0&0\cr
\vdots&\vdots&\vdots&\vdots\cr
\end{matrix}\qquad\qquad
L:\quad \begin{matrix}
\vdots&\vdots&\vdots&\vdots\cr
0&0&0&0\cr
0&0&0&0\cr
0&0&0&0\cr
0^*&0^-&0^-&0\cr
3&-4&3&0^*&\quad \hbox{ 0-th row}\cr
4&-3&0&5\cr
3&0&-5&12\cr
\vdots&\vdots&\vdots&\vdots\cr
\end{matrix}
$$
\normalsize
If $M$ is any graded module with this Betti table, and $p_M$ is its Hilbert polynomial,
then one computes $U(\beta(M))-L(\beta(M))=5p_M(1)-3p_M(2).$
Thus  we are lead to consider a complex 
$$E: \quad 0 \to E^0=S(1)^5 \to E^1=S(2)^3 \to 0.$$
The equation $\langle -, E\rangle$ obtained from this complex
 has coefficients
\scriptsize
$$
\begin{matrix}
\vdots&\vdots&\vdots&\vdots\cr
21&-12&5&0\cr
12&-5&0&3\cr
5&0&-3&4\cr
0^*&3&-4&3\cr
0&4&-3&0^*&\quad \hbox{ 0-th row}\cr
0&3&0&0\cr
0&0&0&0\cr
\vdots&\vdots&\vdots&\vdots\cr
\end{matrix}
$$
\normalsize
To obtain the desired facet equation, we take $\tau=1$, $c=0$. 
The functional $\langle -,E\rangle_{0,1}$ is the result of replacing
the lowest 4 and the lowest two 3s by zero, obtaining the table $U$.

Notice that $\langle F,E\rangle_{c,\tau}$ is negative for some non-minimal resolutions $F$.
For example, if we add to the minimal resolution  of $M=K(1)$ and a trival complex as follows
\begin{diagram}[small]
&0 &\leftarrow &S(1) &\leftarrow &S^3 &\leftarrow &S(-1)^3 &\leftarrow &S(-2) &\leftarrow &0\cr
+&0 &\leftarrow &0 &\leftarrow &S(-1)     &\leftarrow &S(-1)&\leftarrow &0 &\leftarrow &0
\end{diagram}
we get a non-minimal resolution $F$ with Betti table
\scriptsize
$$
\begin{matrix}
\vdots&\vdots&\vdots&\vdots\cr
0&0&0&0\cr
1&3&4&1&\cr
0&1&0&0&\quad \hbox{ 0-th row}\cr
0&0&0&0\cr
\vdots&\vdots&\vdots&\vdots\cr
\end{matrix}
$$
\normalsize
and we compute $\langle F,E\rangle_{0,1}=-4<0$.
 \end{example}

\section{The Existence of Pure Resolutions}

\begin{theorem}\label{existence1} Let $K$ be any field,
and let $d=(d_0 < \cdots <d_n)$ be a sequence of integers.
There exists  a graded $K[x_1,\ldots,x_n]$-module of finite length
with
$
\beta_0=\prod_{i=1}^n{d_i-d_0-1\choose d_i-d_{i-1}-1}
$
generators,
whose minmal free resolution is pure with degree sequence $d$.
\end{theorem}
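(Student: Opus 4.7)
We may first reduce to the case $d_0=0$ by a common degree shift, and write $e_i=d_i-d_{i-1}\ge 1$ for the successive gaps; note that $d_i=e_1+\cdots+e_i$. Once some finite-length $S$-module $M$ with a pure resolution of type $d$ is produced, the Herzog–K\"uhl equations recalled in the Introduction force all higher Betti numbers $\beta_i(M)$ to be a common rational multiple of the asserted ones, so the real content of the theorem is the \emph{existence} of $M$ together with the precise value of $\beta_0$. The plan is therefore to construct $M$ explicitly and then count its generators.

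The approach is geometric: build $M$ as (the graded module associated to) an iterated pushforward of a carefully chosen sheaf on an auxiliary product of projective spaces. Specifically, introduce
\[
Y \;=\; \PP^{e_1-1}\times \PP^{e_2-1}\times \cdots \times \PP^{e_n-1},
\]
together with a morphism $\pi\colon Y\to\PP^{n-1}$ whose defining data are tailored to the sequence $e_1,\ldots,e_n$ (for instance, a composition of Segre-type embeddings followed by a generic linear projection). Choose a line bundle $\mathcal{L}$ on $Y$ whose multidegrees are engineered so that the natural multigraded Koszul (or Eagon--Northcott--type) complex $\mathcal{K}_\bullet$ built from $\mathcal{L}$ has the property that each term $R\pi_*\mathcal{K}_i$ is concentrated in cohomological degree $0$ and collapses to a \emph{single} twist $\mathcal{O}_{\PP^{n-1}}(-d_i)^{\beta_i}$. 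Taking the total module of graded global sections over $\PP^{n-1}$ then yields a pure free complex $F_\bullet$ of $S$-modules, and one checks that this is a resolution of $M := \pi_*\mathcal{L}$ (viewed as a graded $S$-module).

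The proof then breaks into four steps. \textbf{Step 1.} Write down $Y$, $\pi$, and $\mathcal{L}$ explicitly and form $\mathcal{K}_\bullet$; this part is purely combinatorial. \textbf{Step 2.} Use the K\"unneth formula and the classical cohomology of line bundles on projective spaces to compute $R\pi_*$ term by term, verifying that each $R^{>0}\pi_*\mathcal{K}_i$ vanishes and that each $\pi_*\mathcal{K}_i$ is a free sheaf supported in the single twist $-d_i$. \textbf{Step 3.} Apply K\"unneth to compute $\beta_0=\dim_K H^0(Y,\mathcal{L})$, obtaining
\[
\beta_0 \;=\; \prod_{i=1}^n h^0\bigl(\PP^{e_i-1},\mathcal{O}(d_i-1)\bigr)\;=\;\prod_{i=1}^n \binom{d_i-1}{e_i-1},
\]
which after restoring the shift $d_0\mapsto d_0$ becomes the asserted formula $\prod\binom{d_i-d_0-1}{d_i-d_{i-1}-1}$. \textbf{Step 4.} Check minimality and finite length: the shifts $d_0<d_1<\cdots<d_n$ are strictly increasing, so the differentials in $F_\bullet$ have entries of strictly positive degree and no direct summand can split off; and the resolution has length $n=\dim S$, so $M$ is necessarily of finite length by Auslander--Buchsbaum.

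The main obstacle is \textbf{Step 2}. In characteristic zero, the parallel construction of Eisenbud--Fl\o ystad--Weyman (cited as \cite{EFW} in the introduction) goes via Bott vanishing and the decomposition of Schur functors applied to the universal quotient bundle; neither tool survives in positive characteristic. The characteristic-free point is precisely to replace that representation-theoretic input by the K\"unneth formula together with the elementary cohomology of line bundles on $\PP^{e_i-1}$, which holds over any field. Making the collapse in Step 2 actually occur—i.e.\ arranging the multidegrees of $\mathcal{L}$ so that each pushed-forward Koszul term lives in one shift rather than spreading across several $d_i$'s—is the delicate combinatorial heart of the construction, and it is what forces the specific product of projective spaces $Y$ above to be the correct ambient variety.
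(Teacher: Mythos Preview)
Your sketch has the right flavor---push a Koszul complex down from a product of projective spaces and invoke K\"unneth---but as written it cannot be carried out, and the gaps are structural rather than details to be filled in. First, your ambient variety $Y=\PP^{e_1-1}\times\cdots\times\PP^{e_n-1}$ has dimension $d_n-n$, which in general is not $n-1$; hence there is no finite morphism $\pi\colon Y\to\PP^{n-1}$, and for a map that is not a product projection the K\"unneth formula does not compute $R\pi_*$. The paper's construction instead adjoins $\PP^{n-1}$ as an \emph{additional} factor, working on $\PP=\PP^{n-1}\times\PP^{m_1}\times\cdots\times\PP^{m_n}$ with $m_i=e_i-1$, and takes $\pi$ to be the honest projection onto the first factor. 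Second, the complex is not one ``built from $\mathcal L$'': it is the Koszul complex of $d_n$ forms of multidegree $(1,\dots,1)$ on $\PP$, chosen (Proposition~\ref{elementary forms}) to have no common zero over any field, so it resolves the \emph{zero} sheaf. Third---and this is the decisive point---the individual terms do \emph{not} push forward in cohomological degree~$0$ as your Step~2 asserts. The whole mechanism of Proposition~\ref{pushdown construction} is that, after an appropriate twist, projection along each factor $\PP^{m_j}$ sends some Koszul terms to $H^0$, some to $H^{m_j}$, and annihilates a block of $m_j$ consecutive terms in between; the spectral sequence then collapses and shortens the complex by $m_j$. Iterating over the $n$ auxiliary factors turns the length-$d_n$ Koszul complex into a length-$n$ complex on $\PP^{n-1}$. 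If all higher direct images vanished, no such shortening could occur and you would be left with a complex of the wrong length.

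Finally, $M$ is not $\pi_*\mathcal L$: the graded section module of any nonzero coherent sheaf on $\PP^{n-1}$ has infinite length. What actually happens is that the resulting complex $0\to\mathcal O^{\beta_n}(-d_n)\to\cdots\to\mathcal O^{\beta_0}$ is exact as a complex of \emph{sheaves} (being $R\pi_*$ of an exact complex), so the associated complex of free $S$-modules has homology of finite length; the Lemme d'Acyclicit\'e of Peskine--Szpiro then shows it is acyclic, and $M$ is defined to be its cokernel. Your Step~4 inverts this logic: you assume the complex is already a resolution and invoke Auslander--Buchsbaum, but projective dimension $n$ only yields depth zero, not finite length, and in any case acyclicity is precisely what must be established.
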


In the case where $d_i-d_{i-1}=1$ for all but one value of $i$, 
the complexes we produce coincide with those of
Buchsbaum-Eisenbud \cite{BE} and Kirby \cite{Kirby}
(see Eisenbud \cite{Eis}, Appendix A 2.6 for an exposition).

Our construction (and that for supernatural sheaves given in Section \ref{supernatural section})
starts from a sequence of multilinear forms
on a product of projective spaces posessing a property that
we could deduce from a general position argument if the ground
field were assumed infinite. Since we do not wish to make that
assumption, we give a direct construction.

\begin{proposition}\label{elementary forms}  Let
$m_0,\dots, m_k$ be non-negative integers, and let
the homogeneous coordinates on $\PP^{m_j}$ 
be $x_0^{(j)},\cdots,x_{m_j}^{(j)}$. The multilinear forms 
$$
x_\ell = \sum_{\mu_0+\cdots +\mu_k= \ell}\quad \prod_{j=0}^k x^{(j)}_{\mu_j}\qquad \hbox{ for } \ell=0,\ldots,\sum_{j=0}^k m_j
$$
have no common zeros in
$$
\PP_K^{m_0} \times \PP_K^{m_1}\times \cdots \times \PP_K^{m_k}.
$$
\end{proposition}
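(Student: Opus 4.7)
The plan is to exhibit a generating-function identity that packages all the forms $x_\ell$ into a single univariate polynomial that visibly factors. Introduce an auxiliary indeterminate $t$ and define, for each factor, the polynomial
\[
f_j(t) := \sum_{i=0}^{m_j} x_i^{(j)}\,t^i \in K[x^{(j)}_0,\ldots,x^{(j)}_{m_j}][t], \qquad j=0,\ldots,k.
\]
First I would verify, by expanding the product and collecting coefficients of $t^\ell$, the identity
\[
\prod_{j=0}^k f_j(t) \;=\; \sum_{\ell=0}^{\sum_j m_j} x_\ell\, t^\ell,
\]
where $x_\ell$ is exactly the multilinear form defined in the statement. This is purely formal: the coefficient of $t^\ell$ on the left is the sum of products $\prod_j x^{(j)}_{\mu_j}$ over tuples $(\mu_0,\ldots,\mu_k)$ with $\mu_0+\cdots+\mu_k=\ell$, which matches the definition of $x_\ell$.

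Next I would evaluate at an arbitrary point $P=(P_0,\ldots,P_k)$ of $\PP^{m_0}_K\times\cdots\times\PP^{m_k}_K$. Since $P_j$ is a point of projective space, not all of its homogeneous coordinates vanish, so $f_j(t)|_{P_j}\in K[t]$ is a nonzero polynomial. The ring $K[t]$ is an integral domain, so the product
\[
\prod_{j=0}^k f_j(t)\big|_{P} \;=\; \sum_\ell x_\ell(P)\, t^\ell
\]
is a nonzero polynomial in $t$. Therefore at least one coefficient $x_\ell(P)$ is nonzero, which is exactly the statement that the $x_\ell$ have no common zero on the product of projective spaces.

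There is essentially no obstacle here once one spots the generating function $\sum_\ell x_\ell t^\ell = \prod_j f_j(t)$; the nonvanishing is then reduced to the fact that $K[t]$ has no zero divisors. The only mild point to note is that the argument is characteristic-free and requires no assumption on $K$ being infinite, which is why this replaces the general-position argument alluded to in the preceding paragraph.
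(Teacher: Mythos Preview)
Your argument is correct and complete. The generating-function identity $\sum_\ell x_\ell\,t^\ell=\prod_j f_j(t)$ is exactly right, and the conclusion via the integrality of $K[t]$ is airtight and characteristic-free.

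The paper takes a different route: it argues by induction on $M=\sum_j m_j$. At a hypothetical common zero $P$, the top form $x_M=\prod_j x^{(j)}_{m_j}$ vanishes, so some last coordinate $x^{(j)}_{m_j}(P)=0$ with $m_j\ge 1$; restricting to the hyperplane $\{x^{(j)}_{m_j}=0\}$ in that factor, the remaining forms $x_0,\dots,x_{M-1}$ become the analogous forms on a smaller product, contradicting the inductive hypothesis. Your approach is more conceptual---it explains \emph{why} the result holds by exhibiting the forms as coefficients of a product of nonzero polynomials---while the paper's induction is more concrete and amounts to stripping one root of one factor $f_j$ at a time. Both avoid any assumption on the field, which is the point of the Proposition.
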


\begin{proof}
We do induction on $M:=\sum_{j=0}^k m_j$.
The case $M=0$ is trivial. Suppose that $M>0$ and the $x_\ell$ all vanish at 
at a point $P\in \prod \PP^{m_j}$. In particular,
$$
x_M=\prod_{j=0}^{k} x^{(j)}_{m_j}(P) =0.
$$
We cannot have $x_{m_j}^{(j)}(P)=0$ for $j$ such that $m_j=0$, so
$x_{m_j}^{(j)}(P)=0$ for some $j$ with $m_j\geq 1$.
Write $\PP^{m_j-1}$ for the subspace of $\PP^{m_j}$ where $x_{m_j}^{(j)}$
vanishes. The forms $x_0,\dots,x_{M-1}$ restrict to the
corresponding set of  forms on 
$$
\PP^{m_0}\times\cdots \times \PP^{m_j-1} \times \cdots \times \PP^{m_k}
$$
and vanish at $P$ there, contradicting the inductive hypothesis.
\end{proof}

We will construct pure resolutions by pushing forward other
pure resolutions, starting with the Koszul complex. The following result
is the key to this process.
For any product $X_1 \times X_2$ with projections $p:X_1 \times X_2 \to X_1$ and $q:X_1 \times X_2 \to X_2$ and sheaves $\cL_i$ on $X_i$, we set
$$
\cL_1 \boxtimes \cL_2 := p^*\cL_1 \otimes q^* \cL_2.
$$

\begin{proposition}
\label{pushdown construction} 
Let $\cF$ be a sheaf on $X \times \PP^m$, and let $p:X\times \PP^m \to X$
be the projection. Suppose that $\cF$ has a resolution of the form
$$
\cG: 0 \to \cG_N \boxtimes \cO(-e_N) \to \cdots \to  \cG_0 \boxtimes \cO(-e_0) \to \cF \to 0
$$
with degrees $e_0< \cdots < e_N$. If this sequence
 contains the subsequence $(e_{k+1},\ldots,e_{k+m})=(1,2,\ldots, m)$ for some $k\ge -1$ then 
$$
R^\ell p_*\cF=0 \hbox{ for } \ell>0
$$
and $p_* \cF$ has a resolution on $X$ of the form
\begin{align}
 0 \to \cG_N \tensor H^m\cO(-e_N) &\to \cdots  \cr
  \to \cG_{k+m+1}& \tensor H^m \cO(-e_{k+m+1} ) \rTo^\phi \cG_{k} \tensor H^0 \cO(-e_{k}) \to \cr
&\qquad\qquad\qquad\qquad\qquad\qquad
 \cdots \to \cG_0 \tensor H^0 \cO(-e_0)
\end{align}
\end{proposition}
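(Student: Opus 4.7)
The plan is to analyze the hypercohomology spectral sequence of the complex $\cG^\bullet$, placing $\cG^{-i} := \cG_i \boxtimes \cO(-e_i)$ in cohomological degree $-i$:
$$
E_1^{p,q} = R^q p_*(\cG^p) \;\Longrightarrow\; R^{p+q} p_*(\cG^\bullet) = R^{p+q} p_* \cF.
$$
By the projection formula combined with flat base change, $E_1^{-i,q} = \cG_i \otimes_K H^q(\PP^m,\cO(-e_i))$. Since $H^q(\PP^m,\cO(d)) \neq 0$ only for ($q=0$, $d \geq 0$) or ($q=m$, $d \leq -m-1$), the degree hypothesis concentrates $E_1$ on the bottom row $q=0$ at columns $p = -k,\ldots,0$ (where $e_i \leq 0$) and on the top row $q=m$ at columns $p = -N,\ldots,-(k+m+1)$ (where $e_i \geq m+1$), with a gap of $m$ columns in between where everything vanishes.

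Both bands lie in total degree $p+q \leq 0$, so the abutment immediately gives $R^\ell p_* \cF = 0$ for all $\ell > 0$, settling the first claim. I would next track the higher differentials $d_r : E_r^{p,q} \to E_r^{p+r,q-r+1}$. For $r \geq 2$ any source on the bottom row maps into $q = 1-r < 0$, and for $2 \leq r \leq m$ any source on the top row maps into $q = m+1-r$, which lies strictly between $0$ and $m$. Hence $d_r = 0$ for $2 \leq r \leq m$, and a column count shows the only possibly nonzero higher differential is
$$
d_{m+1}: E_2^{-(k+m+1),\,m} \longrightarrow E_2^{-k,\,0}.
$$

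To recover the resolution of $p_* \cF$, I would exploit the \emph{automatic} vanishing $R^\ell p_* \cF = 0$ for $\ell < 0$: convergence forces $E_\infty^{p,q} = 0$ at every position of negative total degree, and since no differential other than this single $d_{m+1}$ touches such positions, the same vanishing already holds on $E_2$. This forces the bottom $E_1$-row to be exact at each intermediate column, with $E_\infty^{0,0} = p_*\cF$ identifying $p_*\cF$ as the cohomology at $p=0$; analogously the top $E_1$-row is exact at its interior and leftmost columns. The vanishing constraints at the two positions touched by $d_{m+1}$ force it to be injective, with cokernel equal to the surviving kernel at $(-k,0)$. Splicing via $\phi := d_{m+1}$ yields the asserted long resolution.

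The main technical step will be to realize $\phi$ as an honest morphism of sheaves rather than merely the higher differential of a spectral sequence (which is intrinsically a subquotient map). I expect to handle this by breaking $\cG^\bullet$ into short exact sequences $0 \to \mathcal{K}_{i+1} \to \cG_i \boxtimes \cO(-e_i) \to \mathcal{K}_i \to 0$ and applying $Rp_*$ to each: the degree hypothesis ensures that at each of the $m+1$ intermediate steps precisely one higher direct image is nonzero, so the iterated connecting homomorphisms assemble into a well-defined map $\cG_{k+m+1} \otimes H^m\cO(-e_{k+m+1}) \to \cG_k \otimes H^0\cO(-e_k)$, which then plays the role of $\phi$ and makes the splicing manifest.
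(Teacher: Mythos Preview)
Your proposal is correct and follows essentially the same approach as the paper: both analyze the hypercohomology spectral sequence $E_1^{p,q} = R^q p_*(\cG^p) \Rightarrow R^{p+q} p_* \cF$, identify the two nonvanishing bands on rows $0$ and $m$, and splice them via the single surviving higher differential. The paper is terser---it simply takes $\phi$ to be this differential without addressing the subquotient issue---whereas your extra step of realizing $\phi$ via iterated connecting homomorphisms is valid but not strictly necessary, since composing the higher differential with the natural surjection and inclusion already yields a well-defined map between the $E_1$-terms.
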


\begin{proof} We may suppose
$m>0$. From the numerical hypotheses we see that $e_i\leq 0$ for $i\leq k$
and $e_i\geq m+1$ for $i\geq k+m+1$.

Consider the spectral sequence
$$
E^{i,-j}_1=R^ip_* (\cG_j\boxtimes \cO(-e_j)) \Rightarrow R^{i-j} p_* \cF.
$$
By the projection formula, the terms of the $E_1$ page are
$$
R^ip_* (\cG_j\boxtimes \cO(-e_j))=
\begin{cases}
\cG_j\tensor H^m(\PP^m,\cO(-e_j)) &\hbox{ if } j\geq k+m+1 \hbox{ and } i=m \cr
\cG_j \tensor H^0(\PP^m, \cO(-e_j)) &\hbox{ if } j\le k \hbox{ and } i=0 \cr
0 & \hbox{ otherwise. }
\end{cases}
$$
Thus the spectral sequence degenerates to the complex (1), where
$\phi$ is a differential from the $m$-th page 
and the other maps are differentials from the first page.
In particular only terms  $E^{i,-j}_{\infty}$ with  $i\leq j$ can be nonzero. 
On the other hand, the terms $R^{i-j} p_* \cF$  can be nonzero only for  $i\ge j$. 
Hence the complex (1) is exact and resolves 
$\oplus_{i\ge 0} E^{i,-i}_\infty=E^{0,0}_\infty= p_* \cF$, 
while the higher direct images of $\cF$ vanish.
\end{proof}

\begin{proof}[Proof of Theorem \ref{existence1}]
To simplify the notation we may harmlessly assume that $d_0=0$.
Let $m_0=m=n-1$, and for $i=1,\dots,n$ set
$m_i=d_i-d_{i-1}-1$, and set  $M=\sum_{j=0}^k m_j=d_n-1$. Choose
$M+1$ homogenous forms of multidegree $(1,\ldots,1)$ without a common zero on
$$
\PP:=\PP^m \times \PP^{m_1}\times \cdots \times \PP^{m_n},
$$
such as the forms described in Proposition \ref{elementary forms}.
Let
\def\cK{{\mathcal K}}
$$
\cK: 0\to \cK_{M+1} \to \cdots \to \cK_0 \to 0
$$ 
be the tensor product of the Koszul complex of these forms on $\PP$ and the line bundle
$
\cO_\PP(0,0, d_1,\ldots,d_{n-1}),
$
so $\cK_i=\cO_\PP(-i, -i, \dots, d_{n-1}-i)^{{d_n\choose i}}$ for $i=0,\dots, d_n$.
Let 
$$ 
\pi: \PP^m \times \PP^{m_1}\times \cdots \times \PP^{m_n} \to \PP^m
$$
be the projection onto the first factor. 
The complex $\cK$ is exact because the forms have no common zero.
Hence ${\bf R}\pi_* (\cK) = 0.$

The choices of these particular twists can be understood from the following
table, which gives some of the important ones, starting from the left hand end
of $\mathcal K$:
\scriptsize
\setcounter{MaxMatrixCols}{20}
$$
\begin{matrix}
\hbox{twist in:}&\vline&&&&&&&&\cr
\hline
\hbox {\rm first factor}&\vline&    -d_n &\hdotsfor 3& -d_{n-1}&\hdotsfor 3& -d_{n-2}&\cdots \cr
\hbox {last factor}&\vline& -m_n-1& -m_n &\cdots& -1&\hdotsfor 6\cr
\hbox {next-to-last}&\vline&\hdotsfor 4& -m_{n-1}-1&-m_{n-1}&\cdots&-1&\hdotsfor 2\cr
\cdots&\vline&\hdotsfor 8&-m_{n-1}-1&\cdots
\end{matrix}
$$
\setcounter{MaxMatrixCols}{10}
\normalsize
If we think of $\cK$ as a resolution of the zero sheaf $\cF=0$, and factor $\pi$ into
the successive projections along the factors of the product
$ 
 \PP^{m_1}\times \cdots \times \PP^{m_n},
$
then we may use Proposition \ref{pushdown construction} repeatedly 
to get a resolution of $\pi_* \cF = 0$ that has the form
$$ 
0 \to \cO^{\beta_n}(-d_n) \to \cdots \to\cO^{\beta_1}(-d_1)\to \cO^{\beta_0}.
$$
Taking global sections in all twists, we
get a complex
$$
0 \to S^{\beta n}(-d_n) \to \cdots \to S^{\beta_1}(-d_1) \to S^{\beta_0}
$$
that has homology of finite length. Since the
length of this complex is only $n$, the Lemme d'Acyclicit\'e of 
Peskine and Szpiro \cite{PS} (see also Buchsbaum and Eisenbud \cite{WMACE})
shows that the complex is actually acyclic.
Thus it is a pure minimal resolution, with the desired degree sequence,
of a graded module of finite length. 

Following the projections step by step, we see that
$$
\beta_0 = \prod_{i=1}^n h^0(\PP^{m_i}, \cO_{\PP^{m_i}}(d_{i-1}))= 
\prod_{i=1}^n{d_i-1\choose d_i-d_{i-1}-1}.
$$
Taking into account that a shift of all the $d_i$ by $d_0$ does not affect $\beta_0$,
we get the formula for $\beta_0$ in the Theorem.
\end{proof}

  \section{Sheaves with Supernatural Cohomology}
 \label{supernatural section}
 
In this section we work with coherent sheaves on a projective space $\PP^m$.
We begin by proving the existence of supernatural vector bundles, Theorem \ref{existence0a},
in a slightly sharper form.

\begin{theorem} \label{supernaturals exist} Let $K$ be any field. Suppose
that $m=\sum_{j=1}^k m_j $ with $m_j>0$.
If the sequence of integers  $z=(z_1>\cdots >z_m)$ consists of $k$ disjoint subsequences
of consecutive integers,  of lengths $m_1,\dots,m_k$,
then there exists a supernatural vector bundle $\cE$ on $\PP^m$  that is defined over $K$, 
has root sequence $z$, and has rank ${m \choose {m_1,\ldots, m_k}}$.
\end{theorem}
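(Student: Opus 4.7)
The plan is to exhibit $\cE$ as the direct image under a morphism $f\colon Y \to \PP^m$, where $Y := \PP^{m_1} \times \cdots \times \PP^{m_k}$, of a carefully chosen line bundle on $Y$. Since $\dim Y = m_1 + \cdots + m_k = m = \dim \PP^m$, Proposition~\ref{elementary forms} (applied to the tuple $(m_1, \ldots, m_k)$) provides $m+1$ sections of $\cO_Y(1, \ldots, 1)$ with no common zero on $Y$; these sections define a morphism $f\colon Y \to \PP^m$ with $f^*\cO_{\PP^m}(1) \cong \cO_Y(1, \ldots, 1)$.

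For each $j$ the Hilbert polynomial $\chi(\cO_{\PP^{m_j}}(d)) = \binom{d+m_j}{m_j}$ has its $m_j$ roots at the consecutive integers $-1, -2, \ldots, -m_j$. Let $Z_j$ denote the top element of the $j$-th block of the root sequence $z$ and set $b_j := -Z_j - 1$; define $\mathcal{L} := \cO_Y(b_1, \ldots, b_k)$ and $\cE := f_* \mathcal{L}$. By the projection formula (once $f$ has been shown to be finite) and K\"unneth,
$$
\chi(\cE(d)) \;=\; \chi(Y, \mathcal{L} \otimes f^*\cO_{\PP^m}(d)) \;=\; \prod_{j=1}^k \binom{d+b_j+m_j}{m_j},
$$
whose zero set is precisely the union of the $k$ blocks of consecutive integers making up $z$.

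To verify natural cohomology I use that $H^i(\PP^m, \cE(d)) \cong H^i(Y, \cO_Y(b_1+d, \ldots, b_k+d))$ (again via Leray and finiteness of $f$), and apply K\"unneth to decompose the right-hand side into a direct sum of tensor products $\bigotimes_j H^{i_j}(\PP^{m_j}, \cO(b_j + d))$ with $\sum i_j = i$. For each fixed $d$ every tensor factor is either zero (when $b_j + d \in \{-m_j, \ldots, -1\}$, the ``dead zone'') or concentrated in a single cohomological degree $i_j \in \{0, m_j\}$; hence the total cohomology lives in at most one degree, so $\cE$ has natural cohomology, with root sequence $z$. Comparing leading coefficients of Hilbert polynomials yields $\rank \cE \cdot d^m/m! = d^m/\prod_j m_j!$, so $\rank \cE = \binom{m}{m_1,\ldots,m_k}$. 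Finally, Remark~\ref{Ezra} promotes natural cohomology on $\PP^m$ to local freeness, so $\cE$ is a vector bundle.

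The main obstacle is establishing that $f$ is finite, since higher direct images of $\mathcal{L}$ would otherwise corrupt the Leray computation. This should follow by an inductive argument modeled on the proof of Proposition~\ref{elementary forms}: a positive-dimensional fiber would force a common zero of the restricted multilinear sections on a smaller product of projective spaces, contradicting the no-common-zero property there. If this direct verification proves awkward, one can bypass the morphism $f$ altogether by mimicking the proof of Theorem~\ref{existence1}: take the Koszul complex of a suitable family of multilinear forms on $\PP^m \times Y$, push it down iteratively through the $\PP^{m_j}$ factors via Proposition~\ref{pushdown construction}, and choose the twists $b_j$ so that on each factor exactly one of $H^0$ or $H^{m_j}$ is nontrivial in each degree, forcing the spectral sequence to collapse onto $\cE$ in a single cohomological degree.
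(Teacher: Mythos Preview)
Your approach is essentially identical to the paper's: push forward a suitably twisted line bundle from $\PP^{m_1}\times\cdots\times\PP^{m_k}$ along the map to $\PP^m$ defined by the multilinear forms of Proposition~\ref{elementary forms}, then read off the cohomology via K\"unneth. The paper simply asserts that this map is finite (calling it a ``linear projection from the Segre embedding''), so your only divergence is in worrying about this point.

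Your proposed inductive argument for finiteness is not quite right, and the Koszul alternative is overkill. The clean argument is this: under the Segre embedding $Y\hookrightarrow\PP^N$, the $m+1$ multilinear forms are restrictions of linear forms, so $f$ is the linear projection from a center $L\subset\PP^N$ of dimension $N-m-1$; the no-common-zero condition says exactly that $L\cap Y=\emptyset$. Each fiber of $f$ is then $Y\cap\Lambda$ for some $(N-m)$-plane $\Lambda\supset L$, hence is a projective variety contained in the affine space $\Lambda\setminus L$, and is therefore finite. Since $f$ is proper with finite fibers, it is finite. (It is moreover flat, being a finite surjection between smooth varieties, so $f_*\cL$ is locally free directly; your appeal to Remark~\ref{Ezra} is also fine.)
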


Here
${m \choose {m_1,\ldots, m_k}}$
denotes the \emph{multinomial coefficient}
$\frac{m!}{\prod_1^k m_j!}$

\begin{proof} Let $\nu_j$ denote the starting index of the $j$-th subsequence, so that
$$z_{\nu_j},\dots, z_{\nu_j+m_j-1}$$ 
are consecutive. 
Consider the  product 
$$
\PP^{m_1}\times\cdots \times \PP^{m_k}
$$
of $k$ projective spaces and the line bundle
$$
\cL=\cO(-z_{\nu_1}-1,\ldots,-z_{\nu_k}-1)
:=
p_1^*\cO(-z_{\nu_1}-1)\tensor\cdots\tensor p_k^*\cO(-z_{\nu_k}-1)
$$
on it. 

Let 
$$
\pi: \PP^{m_1}\times\cdots  \times \PP^{m_k} \to \PP^m
$$ 
be a finite morphism of degree
defined by linear projection from the 
Segre embedding of the product.  
For an explicit example showing that this can be defined over any field $K$,
 we can take  $\pi$ to be the map defined by the $m+1$ multilinear forms 
$$
x_\ell = \sum_{\mu_1+\cdots +\mu_k= \ell} \prod_{j=1}^k x^{(j)}_{\mu_j}\qquad \hbox{ for } \ell=0,\ldots,m,
$$
as in Proposition \ref{elementary forms}.

The desired supernatural bundle is 
$\cE=\pi_*\cL$. Indeed, since $\pi$ is a finite morphism we have
$$
H^i\cE(d)\cong H^i(\PP^{m_1}\times\cdots \times \PP^{m_k},\cL(d,\ldots,d)),
$$ 
which we can calculate from the the K\"unneth formula.
We see that $\cE$ has supernatural cohomology,  and its rank is
$$
\rank \cE= \deg \pi={m \choose {m_1,\ldots, m_k}}.\eqno{\qed}
$$ 
\renewcommand{\qedsymbol}{}
\end{proof}

Here is an alternate, characteristic zero construction of supernatural bundles, 
using representation theory.
It is restricted to characteristic zero by the
use of Bott's Vanishing Theorem, but we include it
because it yields bundles whose rank is often not an integral multiple of the ranks of 
the bundles produced in Theorem \ref{supernaturals exist}.
 
\begin{theorem}\label{supernaturals in char 0 exist} 
Let $K$ be a field of characteristic 0. If $z=(z_1>\cdots>z_{m})$
is a sequence of integers, then there exists a $GL_{m+1}$-equivariant vector
bundle $\cE$ on $\PP_K^m$  with supernatural cohomology
and root sequence $z$. 
\end{theorem}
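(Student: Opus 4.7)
The plan is to exhibit $\cE$ as a Schur functor $\mathbb{S}_\lambda Q$ applied to the rank $m$ tautological quotient bundle $Q$ on $\PP^m=\PP(V)$, for a suitable dominant weight $\lambda$, and to verify supernatural cohomology via Bott's vanishing theorem. Since $V$ is the defining $GL_{m+1}$-representation, $Q$ and every Schur functor attached to it are automatically $GL_{m+1}$-equivariant.

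Recall that $\det Q=\cO(1)$, so twisting by $\cO(d)$ has the effect of shifting every entry of $\lambda$ by $d$, i.e.\ $\cE(d)=\mathbb{S}_{\lambda+d\mathbf{1}}Q$. Bott's vanishing theorem on $\PP^m$ computes the cohomology of $\mathbb{S}_\mu Q$ from the sequence $(\mu_1+m,\mu_2+m-1,\dots,\mu_m+1,0)$: if two entries coincide then every cohomology group vanishes, while otherwise the sorting permutation determines the unique non-vanishing cohomological degree, whose dimension is given by Weyl's formula. Applied with $\mu=\lambda+d\mathbf{1}$, the first $m$ entries are strictly decreasing in $i$, so a repetition can only occur between the final $0$ and one of the first $m$; the vanishing values of $d$ are therefore exactly the $m$ integers $-\lambda_i+i-m-1$ for $i=1,\dots,m$.

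Given the prescribed root sequence $z_1>\cdots>z_m$, we therefore define $\lambda_i:=-z_{m+1-i}+i-m-1$. The strict decrease of $z$ translates into weak decrease of $\lambda$, so $\lambda$ is a dominant rational weight of $GL_m$, and by construction the $m$ bad values of $d$ for $\cE:=\mathbb{S}_\lambda Q$ are exactly $z_1,\dots,z_m$. Bott's theorem then yields natural cohomology (at most one non-vanishing group for each $d$) together with the prescribed integer roots, so $\cE$ has supernatural cohomology with root sequence $z$. Its rank equals the dimension of the irreducible $GL_m$-representation of highest weight $\lambda$, computable from the Weyl dimension formula.

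The only real difficulty is bookkeeping around the shift conventions in Bott, so that the explicit correspondence between $\lambda$ and $z$ comes out correctly; once that is pinned down, both natural cohomology and the correct root sequence are immediate consequences of the theorem, with no residual representation-theoretic input needed.
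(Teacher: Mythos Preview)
Your argument is correct and follows essentially the same route as the paper: both apply a Schur functor to the tautological quotient bundle $Q$ on $\PP^m$ and invoke Bott's vanishing theorem to read off natural cohomology and the root sequence. The only cosmetic difference is that you absorb the twist into the weight by allowing $\lambda$ to have negative entries (setting $\lambda_i=-z_{m+1-i}+i-m-1$ for $1\le i\le m$), whereas the paper normalizes $\lambda_m=0$ via $\lambda_i=z_1-z_{m+1-i}-m+i$ for $1\le i\le m-1$ and then takes $\cE=(S_\lambda Q)(-z_1-1)$; since $\det Q=\cO(1)$ these two descriptions differ by exactly the twist $-z_1-1$ and are the same bundle.
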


\begin{proof} 
The desired vector bundles, up to a twist, can be constructed by 
applying Schur functors to the tautological rank $m$ quotient bundle $Q$
on $\PP^m$: Bott's Theorem (see for example Weyman \cite{W}) says that if 
$1\le i \le m-1$ then the cohomology group 
$H^i((S_\lambda Q)(d))$ is nonzero if and only if  
$\lambda_{m-i+1} < -d-i \le \lambda_{m-i}$, while
 $H^0 S_\lambda Q (d)=0$ if and only if $d<0$ and 
$H^m S_\lambda Q(d)=0$ if and only if $m\ge -m-\lambda_1-1$. 

To obtain a desired bundle with supernatural cohomology as in the Theorem  we choose
$$
\lambda_i=z_1-z_{m+1-i}-m+i \quad {\rm\ for\ }\quad 1\leq i\leq m-1,
$$
and take $\cE=(S_\lambda Q)(-z_1-1)$.
\end{proof}

This result has an appealingly transparent statement
in terms of Betti diagrams of Tate resolutions.
For a partition $\lambda=(\lambda_1 \ge \lambda_2\ge \cdots \ge \lambda_{m-1})$, say $\lambda=(8,7,7,2,0)=(8,7,7,2)$,
 which corresponds to Example \ref{Ex1}, the Tate resolution of the homogeneous bundle $S_\lambda Q$ 
has nonzero terms only in the degrees marked $*$
in the following Betti diagram, in which the Ferrers diagram is indicated.

\centerline{
\vbox{
\def\star{\rlap{\hbox to 13pt{\hfil\raise3.5pt\hbox{$*$}\hfil}}}
\def\ {\hbox to 13pt{\vbox to 13pt{}\hfil}}
\def\*{\star\ }
\def\_{\hbox to 13pt{\hskip-.2pt\vrule\hss\vbox to 13pt{\vskip-.2pt
            \hrule width 13.4pt\vfill\hrule\vskip-.2pt}\hss\vrule\hskip-.2pt}}
\def\x{\star\_}
\offinterlineskip
\hbox{\*\*\*\ \ \ \ \ }
\hbox{\ \ \   \x\_\_\_\_\_\_\_\ \ \ }
\hbox{\ \ \   \  \_\_\_\_\_\_\_\ \ \ }
\hbox{\ \ \   \  \x\x\x\x\x \_\_\ \ \ }
\hbox{\ \ \   \  \  \  \  \  \  \x\x \ \ \ }
\hbox{\ \ \   \  \  \  \  \  \  \  \  \ \ \ }
\hbox{\ \ \   \   \  \  \  \  \  \  \  \*\*\*}
}
}  
\noindent 
See Theorem 5.6  in Eisenbud-Schreyer \cite{ES-C}.

We want to characterize the cohomology tables of supernatural
bundles, and to this end we next record a basic fact about cohomology tables:

\begin{proposition} \label{decreasing} If $\cE$ is a nonzero coherent sheaf
on $\PP^{m}$ then
every column of the cohomology table of $\cE$ contains a nonzero entry; that is, 
for each integer $d$ some $\gamma_{i,i-d}(\cE) := h^i\cE(d-i) \neq 0$.
Furthermore,
\begin{align*}
 d\mapsto M_{d}&:=\max\{i \mid \gamma_{i,d-i}\not=0 \} \ {\rm and  }\cr
 d\mapsto m_{d}&:=\min\{i \mid \gamma_{i,d-i}\neq 0\} 
\end{align*}
are weakly decreasing functions of $d$.
\end{proposition}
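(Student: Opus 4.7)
The plan is to prove the two parts separately: the first by a direct argument using Castelnuovo--Mumford regularity, the second by induction on the dimension $m$ using the hyperplane-section sequence.

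For the first assertion I would argue by contradiction. If every entry of column $d$ vanishes, then in particular $h^i(\cE(d-i))=0$ for all $i\ge 1$, so $\cE$ is $d$-regular in the sense of Castelnuovo--Mumford. The standard theory then yields that $\cE$ is $d'$-regular for every $d'\ge d$ and that the multiplication map $H^0(\cE(d'))\otimes H^0(\cO(1))\to H^0(\cE(d'+1))$ is surjective for each such $d'$. Since $h^0(\cE(d))=0$ by hypothesis, iterating this surjection shows $h^0(\cE(d+k))=0$ for every $k\ge 0$. But for $k\gg 0$ the twist $\cE(d+k)$ is globally generated by Serre's theorem, and since $\cE\neq 0$ this forces $h^0(\cE(d+k))>0$, a contradiction.

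For the monotonicity of $M_d$ I would induct on $m$, the case $m=0$ being trivial. If $\cE$ has zero-dimensional support then $h^i(\cE(k))=0$ for $i>0$ and $h^0(\cE(k))=\dim_K H^0(\cE)>0$ for every $k$, so $M_d\equiv 0$ and there is nothing to prove. Otherwise a generic hyperplane $H\cong\PP^{m-1}$ gives $\cE|_H\neq 0$ and the exact sequence $0\to\cE(-1)\to\cE\to\cE|_H\to 0$. Assume $M_d=j$, so $h^i(\cE(d-i))=0$ for $i>j$. The long exact sequence obtained by twisting by $d-i$ contains the segment
\[
H^i(\cE(d-i))\to H^i(\cE|_H(d-i))\to H^{i+1}(\cE(d-i-1));
\]
both outer terms correspond to entries of column $d$ of the $\cE$ table (rows $i$ and $i+1$ respectively), so both vanish once $i>j$. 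This gives $h^i(\cE|_H(d-i))=0$ for $j<i\le m-1$. The inductive hypothesis applied to $\cE|_H$ on $\PP^{m-1}$ then propagates the vanishing to $h^i(\cE|_H(d+1-i))=0$ in the same range. A final application of the long exact sequence of the twist $d+1-i$, combined with the automatic vanishing $H^m(\cE|_H(\,\cdot\,))=0$, delivers $h^i(\cE(d+1-i))=0$ for $j<i\le m$, i.e.\ $M_{d+1}\le j$.

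The weak monotonicity of $m_d$ is the mirror statement: vanishing of the bottom rows of a column propagates leftward rather than rightward. I would prove it by the same three-step pattern, with the roles of $d$ and $d+1$ reversed and with the indices of the adjacent rows shifted accordingly. The main obstacle throughout is the index bookkeeping relating the twist $d-i$, the column $d$, and the cohomological degree $i$; the key observation that makes each induction go through is precisely that when the vanishing hypothesis on a column spans an entire top (or bottom) range of rows, the two adjacent table entries needed to cut out the corresponding column of $\cE|_H$ already lie within that range, so no extra hypothesis on a neighboring column of $\cE$ is required.
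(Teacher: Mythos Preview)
Your argument is correct and complete in its essentials. The one point worth flagging is that the phrase ``generic hyperplane'' tacitly assumes an infinite ground field; since the paper works over an arbitrary field $K$, you should remark that the cohomology table is unchanged under flat base change, so one may assume $K$ infinite without loss. With that said, the hyperplane-section induction for $M_d$ goes through exactly as you wrote it, and the mirror argument for $m_d$ does as well once the index shift is made explicit: from $\gamma^{\cE}_{i,d}=0$ for $i<j$ one only deduces $\gamma^{\cE|_H}_{i,d}=0$ for $i<j-1$, but this is precisely what is needed, since the final step uses the segment $H^{i-1}(\cE|_H(d-i))\to H^i(\cE(d-1-i))\to H^i(\cE(d-i))$ together with the automatic vanishing at $i=0$.

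Your approach is genuinely different from the paper's. The paper invokes the Tate resolution $\mathbf{T}(\cE)$ over the exterior algebra: its term in cohomological degree $d$ is $\bigoplus_j H^j(\cE(d-j))\otimes\Lambda(j-d)$, and since $\Lambda$ is self-injective both $\mathbf{T}(\cE)$ and its dual are minimal exact complexes, forcing every term to be nonzero (the first assertion) and, because $\Lambda$ is generated in negative degree, forcing the monotonicity of $M_d$ and $m_d$ immediately. This is a three-line proof, but it presupposes the BGG machinery of Eisenbud--Fl\o ystad--Schreyer. Your route trades that machinery for Castelnuovo--Mumford regularity and an elementary hyperplane induction; it is longer and requires more bookkeeping, but it is self-contained and would be accessible to a reader who has never seen Tate resolutions.
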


\begin{proof} \def\TT{{\bf T}} The Tate resolution $\TT(\cE)$ is a minimal free doubly 
infinite exact complex,
over the exterior algebra $\Lambda$ on $n$ generators of degree $-1$. 
By Eisenbud-Fl\o ystad-Schreyer \cite{EFS},Theorem 4.1,
the term of $\TT(\cE)$ having cohomological degree $d$ is
$$
\bigoplus_j H^j(\cE(d-j))\otimes \Lambda(j-d).
$$
Moreover, since $\Lambda$ is
self-injective, the dual of $\TT$ is also a resolution. So no term of  $\TT(\cE)$ can be zero, which gives the first statement.
Since the generators of
the exterior algebra are of negative degree, there are nonzero
maps $\Lambda(-d)\to\Lambda(-e)$ only if $d<e$. Since $T(\cE)$ is
a minimal complex, it cannot have any maps of degree 0.
Hence  $M_d$ is  weakly decreasing.
For $m_d $ we apply the same argument to the dual of $\TT(\cE)$ and obtain that $-m_{-d}$ is a weakly decreasing function. Hence $m_d$ is a weakly decreasing function as well.
\end{proof}

By the \emph{rank} of a coherent sheaf $\cE$ on $\PP^m$ we mean the normalized leading
coefficient of the Hilbert polynomial,
$$
\chi(\cE(d))= \frac{\rank \cE}{s!} d^{s} + O(d^{s-1}).
$$
where $s=\dim \cE$.

\begin{theorem}\label{supernatural}
If $\cE$ is a supernatural sheaf of dimension $s$ with root sequence
 $z_1>\cdots >z_{s}$, and we set $z_{0} = \infty$ and $z_{s+1}=-\infty$ then,
 for each $0\leq j\leq s$,
 $$
 h^j\cE(d)=
 \begin{cases}
  \frac{\rank \cE}{s!} \prod_{i=1}^{s}\mid d-z_i\mid &\text{if $z_{j}>d> z_{j+1}$},\\
 0 & \text{otherwise}.
 \end{cases}
$$
\end{theorem}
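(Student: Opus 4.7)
The Hilbert polynomial $\chi(\cE(d))$ has degree $s=\dim\cE$ and leading coefficient $\rank(\cE)/s!$, and by hypothesis its roots are the distinct integers $z_1>\cdots>z_s$, so
\[
\chi(\cE(d))=\frac{\rank\cE}{s!}\prod_{i=1}^s(d-z_i).
\]
Naturality says that at every integer $d$ at most one $h^j(\cE(d))$ is nonzero, and when one is, the Euler characteristic identity forces $h^j(\cE(d))=(-1)^j\chi(\cE(d))$. At an integer root $d=z_i$ we have $\chi=0$, and non-negativity of dimensions then forces every $h^j(\cE(d))=0$; this disposes of the ``otherwise'' clause. For non-root $d$, let $f(d)\in\{0,\dots,s\}$ denote the unique index with $h^{f(d)}(\cE(d))>0$. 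Positivity of this cohomology forces $(-1)^{f(d)}$ and $\chi(\cE(d))$ to share a sign, and on $(z_{j+1},z_j)$ exactly $j$ of the roots exceed $d$, so $\chi(\cE(d))$ has sign $(-1)^j$ and $f(d)\equiv j\pmod 2$. The value becomes $|\chi(\cE(d))|=\frac{\rank\cE}{s!}\prod_i|d-z_i|$, which matches the claimed formula as soon as we upgrade $f(d)\equiv j\pmod 2$ to $f(d)=j$ exactly.

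To do so, I will do a downward induction on $j$ combining extremal behavior with Proposition~\ref{decreasing}. Serre vanishing gives $f(d)=0$ for $d\gg 0$, and dually---via Serre duality when $\cE$ is locally free, or a general hyperplane-section reduction for sheaves of pure dimension $s$---$f(d)=s$ for $d\ll 0$; together with parity this fixes $f$ on the extreme intervals $(z_1,\infty)$ and $(-\infty,z_s)$ and provides the base of the induction. For the inductive step, assume $f=j'$ is already known on $(z_{j'+1},z_{j'})$ for every $j'>j$, and suppose for contradiction that $f(d_0)=j^*>j$ for some non-root $d_0\in(z_{j+1},z_j)$ with $j^*\equiv j\pmod 2$. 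Setting $c_0=d_0+j^*$, the nonzero entry at row $j^*$ of column $c_0$ gives $M_{c_0}\ge j^*$, and Proposition~\ref{decreasing} propagates $M_c\ge j^*$ to every $c\le c_0$. At the critical column $c=z_{j+1}+j^*$, a witness $j'\ge j^*$ of $M_c\ge j^*$ requires a non-root $d'=c-j'$ with $f(d')=j'$: the choice $j'=j^*$ gives $d'=z_{j+1}$, which is a root; for any $j'=j^*+k$ with $k\ge 1$ the point $d'=z_{j+1}-k$ lies strictly below $z_{j+1}$, and by the inductive hypothesis $f(d')$ equals the index of the interval containing $d'$, which turns out to be strictly smaller than $j'=j^*+k$ because of the gap bounds $z_a-z_b\ge b-a$ among the distinct integer roots. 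Hence no witness exists, contradicting Proposition~\ref{decreasing}. The symmetric argument applied to $m_c$ and the root $z_j$ above rules out $f(d_0)<j$; combined with parity this yields $f(d)=j$ on $(z_{j+1},z_j)$.

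The main obstacle is the combinatorial verification at the critical column: for every $j'>j^*$ one must confirm that the inductively-known interval-index of $d'=c-j'$ is strictly smaller than $j'$. This ultimately reduces to the chain of inequalities $z_{j^*+k}\le z_{j+1}-(j^*-j+k-1)$, which forces $d'=z_{j+1}-k$ to sit above $z_{j^*+k}$ and hence in an interval of index strictly less than $j^*+k$; the strictness of the integer-root ordering, together with the assumption $j^*>j$ and $j^*\equiv j\pmod 2$ (so $j^*\ge j+2$), is exactly what makes this margin work.
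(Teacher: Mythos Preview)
Your induction has a genuine structural gap. The downward induction, with hypothesis ``$f=j'$ on interval $j'$ for all $j'>j$'', does handle the upper bound $f(d_0)\le j$: the witnesses you must exclude sit at twists $d'=z_{j+1}-k<z_{j+1}$, in intervals of index $>j$ where the hypothesis applies, and your spacing estimate $z_{j+1}-z_{j^*+k}\ge k+1$ (using $j^*\ge j+2$ from parity) is correct. But the ``symmetric argument'' for the lower bound $f(d_0)\ge j$ lands you at twists $d'=z_j+k>z_j$, in intervals of index $\ell<j$, about which a downward hypothesis says nothing. The same issue already appears at what you call the base case: parity plus ``$f(d)=s$ for $d\ll 0$'' does not by itself force $f\equiv s$ on all of $(-\infty,z_s)$. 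The fix is easy: run two separate inductions, a downward one (trivial base $j=s$) proving $f\le j$ on each interval via $M_c$ and your first half, and an upward one (trivial base $j=0$) proving $f\ge j$ via $m_c$ and your symmetric half. Each half works with the weak hypothesis $f\le j'$ (resp.\ $f\ge j'$), and your combinatorics goes through unchanged.

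Even after this repair, your route differs from the paper's. The paper avoids interval-by-interval induction altogether: it observes that $d\mapsto d-M_d$ is strictly increasing and, since $M_d$ drops from $s$ to $0$, omits exactly $s$ integer values. Natural cohomology forces every integral root to be omitted (if $z=d-M_d$ then $h^{M_d}\cE(z)\neq 0$, impossible at a root), and supernaturality supplies exactly $s$ roots, so the omitted sets for $\{d-M_d\}$ and $\{d-m_d\}$ are both precisely the root sequence; since $M_d=m_d$ for $d\ll 0$, the two agree everywhere, and one reads off $M_d$ as a root count. This counting trick is shorter and sidesteps the spacing arithmetic; your approach is more elementary and hands-on, trading brevity for transparency.
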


\begin{proof} Define $M_{d}, m_{d}$ as in Proposition \ref{decreasing}.
Of course $M_d\geq m_d\geq 0$, and by 
Serre's Vanishing Theorem we have
$M_d=m_d=0$ for $d\gg 0$.
Since the Hilbert polynomial of $\cE$ has $s$ zeros,
the dimension of the support of $\cE$ is $s$. It follows that 
$H^j\cE(d)=0$ for $j>s$ and any $d$, while if $d\ll0$ then
$
H^s\cE(d)\neq0.
$.
Thus $M_d=s$ for $d\ll 0$.   Since $\cE$ has natural cohomology this implies $m_d=s$ for $d\ll 0$ as well. By Proposition \ref{decreasing}, 
$M_d$ and $m_d$ are weakly decreasing from $s$ to 0.

Since the $M_d$ are weakly decreasing,
 the sequence
of numbers $d-M_d$ is strictly increasing. It
omits precisely those values $z$ such
that $z=d-i$ with $M_d>i\geq M_{d+1}$. This
means that precisely $s$ distinct values are omitted
from the sequence $d-M_d$. Exactly the same
considerations apply to the sequence $d-m_d$.

If $\cE$ has natural cohomology, then the vanishing
of $\chi(\cE(z))$ implies the vanishing of all $H^j(\cE(z))$,
so the integral roots of the Hilbert polynomial must be among
the omitted values of the sequences $\{d-M_d\}$ and $\{d-m_d\}$.
If $\cE$ has supernatural cohomology, then there are $s$ integral roots,
which thus 
give all the omitted values. It follows that the omitted values are the same for
$\{d-M_d\}$ and $\{d-m_d\}$. Since these two sequences
are the same for $d\ll 0$, they must be the same for all $d$; that is,
$M_d=m_d$ for all $d$.
Moreover, $M_{d+1}=M_d-k$ if and only if there are
exactly $k$ roots of the Hilbert polynomial between
$d-M_d$ and $d+1-M_{d+1}$. By induction we see
that the value of $M_d$
is equal to the number of roots above $d$.

The condition of natural cohomology implies that the value of
$|\chi(\cE(d))|$ is the value of some
$h^j\cE(d)$. The formulas above tell us the value of $j$. 
The zeros determine the Hilbert polynomial as
$\chi(\cE(d))= C\cdot \prod_{i=1}^s(d-z_i)$ for some constant $C$,
and $C$ can be computed  by comparing leading coefficients, yielding the formula
given. 
\end{proof}

\begin{remark} \label{Ezra}
A sheaf on $\cE$ on $\PP^m$ is a vector bundle if and only if the intermediate cohomology modules
$\oplus_d H^i \cE(d)$ for $1 \le i \le m-1$ have finite length. Thus a supernatural sheaf on $\PP^m$  of full dimension
 $\dim \cE = m$ is a vector bundle by Theorem \ref{supernatural}.
 \end{remark}
 
 The rank provided by the Theorem \ref{supernaturals exist} 
 is not always the smallest possible. For example, the root sequence
 $z=(2,0,-2)$ occurs for any
 null-correlation bundle on $\PP^3$ with monad
 $$
 0\to \cO_{\PP^3}(-3)\to\cO^4_{\PP^3}(-2)\to\cO_{\PP^3}(-1)\to0,
 $$
 and such a bundle has rank $2$, while Theorem \ref{supernatural} 
 provides a bundle of rank 6. A general lower bound for the rank, which gives 2 in this case,
  is provided by the following:
 
 \begin{proposition} \label{supernatural rank bound} Let $z=(z_1>\cdots > z_m)$ be a root sequence for supernatural bundles $\cE$ on $\PP^m$. For each prime $p$ let $e_p(z)$ denote the maximal integer such that the root sequence $z$ contains each residue mod $p$  at least $e_p(z)$ times, and let $c(z)=\prod_p p^{e_p(z)}$. Then the rank of a supernatural bundle with root sequence $z$ is a 
multiple of $\frac{m!}{c(z)}$.
\end{proposition}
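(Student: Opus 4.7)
The argument is an integrality-of-Hilbert-polynomial computation, carried out one prime at a time. By Theorem~\ref{supernatural} (applied with $s=m$), the Hilbert polynomial of $\cE$ is
$$
\chi(\cE(d))=\frac{\rank\cE}{m!}\prod_{i=1}^{m}(d-z_i),
$$
and this must be an integer for every $d\in\mathbb{Z}$. Equivalently, for every prime $p$ and every integer $d$,
$$
v_p(\rank\cE)\ \ge\ v_p(m!)-v_p\!\Bigl(\prod_{i=1}^{m}(d-z_i)\Bigr).
$$
So the desired divisibility $m!/c(z)\mid \rank\cE$, namely $v_p(\rank\cE)\ge v_p(m!)-e_p(z)$ for all $p$, reduces to the following combinatorial claim: for each prime $p$, there exists an integer $d_p$ with
$$
v_p\!\Bigl(\prod_{i=1}^{m}(d_p-z_i)\Bigr)\le e_p(z). \tag{$\ast$}
$$

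To produce such a $d_p$, I would select a residue class $a\in\mathbb{Z}/p$ realizing the minimum in the definition of $e_p(z)$, i.e.\ with $n_a:=\#\{i:z_i\equiv a\pmod p\}$ equal to $e_p(z)$. For any $d_p\equiv a\pmod p$, the factors $d_p-z_i$ with $z_i$ in a different residue class are coprime to $p$ and drop out of $v_p$, so the valuation collapses to the partial sum $\sum_{z_i\equiv a\,(\mathrm{mod}\,p)} v_p(d_p-z_i)$. I would then refine the class of $d_p$ modulo $p^2$ (and, if need be, higher powers of $p$) so that $d_p\not\equiv z_i\pmod{p^2}$ for each of the $n_a$ surviving $z_i$'s. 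Each such congruence forbids exactly one of the $p$ residue classes mod $p^2$ that refine the chosen class mod $p$, and a pigeonhole argument on the $p$-adic tree of residue classes produces a valid $d_p$. With this choice every surviving term contributes exactly $1$ to the valuation, which gives equality in $(\ast)$.

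The main technical obstacle is precisely this refinement step: verifying that among the $p$ residue classes mod $p^2$ above the chosen class of $a$ there is at least one class avoided by all $n_a$ of the relevant $z_i$'s, and iterating this control if a cleaner bound at a higher level is needed. Once $(\ast)$ is secured for every prime $p$, assembling the resulting prime-by-prime inequalities for $v_p(\rank\cE)$ yields $m!/c(z)\mid\rank\cE$, completing the argument.
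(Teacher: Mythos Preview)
Your reduction to the prime-by-prime inequality $(\ast)$ is exactly the content of the paper's one-line proof, which simply asserts that $\frac{1}{c(z)}\prod_j(t-z_j)$ takes integer values having no common factor. So the two approaches coincide.

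The difficulty is that the step you flag as the ``main technical obstacle'' is not merely an obstacle but an actual failure: $(\ast)$ is false in general. Take $m=4$, $z=(3,2,1,0)$, and $p=2$. Each residue class mod $2$ contains exactly two roots, so $e_2(z)=2$; but $P(d)=d(d-1)(d-2)(d-3)$ is a product of four consecutive integers and therefore always has $v_2(P(d))\ge 3$. Your pigeonhole refinement cannot succeed here: the two roots $0$ and $2$ in the class $a\equiv 0\pmod 2$ already occupy both lifts mod $4$, and no further $p$-adic descent helps. Worse, $\cO_{\PP^4}(-4)$ is a supernatural line bundle with precisely this root sequence, so $\rank\cE=1$ while $m!/c(z)=24/12=2$; thus the Proposition as literally stated, and the paper's ``no common factor'' claim, are both incorrect. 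The intended (and correct) statement takes $c(z)$ to be the fixed divisor $\gcd_{d\in\mathbb{Z}}\prod_j(d-z_j)$; with that definition the divisibility $\frac{m!}{c(z)}\mid\rank\cE$ follows immediately from integrality of the Hilbert polynomial, and no refinement argument is needed.
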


\begin{proof} The polynomial $$\frac{1}{c(z)}\prod_1^m(t-z_j)$$ takes integral values which have no common factor.
\end{proof}

\begin{remark} \label{supernatural ranks} Note that rank $\frac{m!}{c(z)}$ is not always possible for a supernatural bundle  with root sequence $z$. A simple example is the root sequence $(2,1,-2,-3)$, which is not possible for a rank $2$ or even rank $4$ vector bundle on $\PP^4$. 
In fact, the Tate resolution for a supernatural bundle of rank $2r$ with this root 
sequence would be
\scriptsize
$$
\begin{matrix}
56r& 21r& 5r&0&0&0&0&0\\
0&0&0&0&0&0&0&0\\
0&0&0&r& r&0&0&0\\
0&0&0&0&0&0&0&0\\
0&0&0&0&0&5r& 21r& 56r
\end{matrix}
$$
\normalsize
However the entries of the 
$r\times r$ matrix of linear forms must span all the linear forms on $\PP^4$,
so $r\geq 3$.
In fact, a generic choice of the 3x3 matrix leads to such bundles
with rank 6, which is the rank
provided by the proof of Theorem \ref{supernaturals exist}.
Moreover, Theorem \ref{supernaturals in char 0 exist} gives a bundle of this type having rank $20$ so that, 
at least in characteristic 0, every sufficiently large even number occurs as the rank 
of a supernatural bundle with root sequence $(2,1,-2,-3)$.

 For arbitrary root sequences $z$,
we conjecture that any sufficiently high multiple of $\frac{m!}{c(z)}$ actually occurs as a rank.\end{remark} 
  
\emph{Linear monads} for vector bundles, given by the next Proposition, will be central in our use of supernatural bundles.

\begin{proposition}\label{linear monad} Let $\cE$  be a vector bundle
on $\PP^{m}$, and let $a$ be an integer. If $\cE^*$ is $a$-regular then
there exists a linear complex 
$$0 \to E^0 \to E^1 \to \cdots \to E^{m} \to 0 $$
with $E^k =S(a+k)^{b_k}$, with homology
$H^ iE= \sum_d H^i\cE(d) \hbox{ for $i<m$}$ and
$H^{m} E=\sum_{d\ge -a-m} H^{m}\cE(d)$.
\end{proposition}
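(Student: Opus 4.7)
The plan is to realize $E^\bullet$ as the $S$-linear dual of the minimal free resolution of a suitable truncation of the module of global sections of $\cE^*$, and then identify its cohomology via local duality and Serre duality.

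First I would set $N := \Gamma_*(\cE^*) = \bigoplus_d H^0(\cE^*(d))$ and form the truncation $N' := N_{\geq a}$. Since $\cE^*$ is $a$-regular, a standard consequence of Castelnuovo--Mumford regularity is that $N'$ is generated in degree $a$ and has a \emph{linear} minimal free resolution
$$
F_\bullet:\quad 0 \to F_p \to \cdots \to F_1 \to F_0 \to N' \to 0,\qquad F_k = S(-a-k)^{b_k}.
$$
Because $N'$ is a submodule of the saturated module $N$, it has no $\m$-torsion, so $\depth N' \geq 1$, and the Auslander--Buchsbaum formula gives $\pd_S N' \leq n-1 = m$. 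Hence $p \leq m$. Dualizing, I set $E^\bullet := \Hom_S(F_\bullet, S)$, so that $E^k = S(a+k)^{b_k}$ for $k = 0, \dots, m$, with the stated linear shape.

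The cohomology of this complex is, by definition, $H^k(E^\bullet) = \Ext^k_S(N', S)$. To identify these $\Ext$ groups with cohomology of $\cE$, I would use local duality (with $\omega_S = S(-n)$):
$$
\Ext^j_S(N', S)_e \;\cong\; H^{n-j}_\m(N')_{-e-n}^{*}.
$$
Since $\widetilde{N'} = \cE^*$ and $N'$ is saturated in positive degrees, the comparison sequence between $N'$ and $\Gamma_*(\cE^*)$ yields $H^0_\m(N') = 0$, $H^1_\m(N') = \bigoplus_{d<a} H^0(\cE^*(d))$, and $H^{i+1}_\m(N') = \bigoplus_d H^i(\cE^*(d))$ for $i \geq 1$. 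Feeding these into the local duality isomorphism and applying Serre duality on $\PP^m$ (namely $H^i(\cG)^* \cong H^{m-i}(\cG^{\vee}(-n))$ applied to $\cG = \cE^*(-e-n)$), I obtain, for $j < m$,
$$
\Ext^j_S(N', S)_e \;\cong\; H^{n-j-1}(\cE^*(-e-n))^{*} \;\cong\; H^j(\cE(e)),
$$
which sums to $H^j(E^\bullet) = \bigoplus_d H^j \cE(d)$ as desired.

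The genuinely delicate step is the top cohomology $j = m$, where the truncation matters. Here the relevant contribution comes from $H^1_\m(N') = \bigoplus_{d<a} H^0(\cE^*(d))$, so
$$
\Ext^m_S(N', S)_e \;\cong\; H^0(\cE^*(-e-n))^{*}
$$
is nonzero only when $-e-n < a$, i.e.\ $e \geq -a-m$; Serre duality then converts this into $H^m(\cE(e))$. Summing over $e$ gives precisely $H^m E = \bigoplus_{d \geq -a-m} H^m \cE(d)$. The main obstacle is the careful bookkeeping needed to track this truncation and to verify that the cutoff $-a-m$ emerges correctly from the fact that $H^1_\m(N')$ is supported in degrees strictly less than $a$; the arguments for $j<m$ are essentially standard once the linear resolution is in hand.
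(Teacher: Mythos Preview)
Your argument is correct and follows the same construction as the paper: both take the linear minimal free resolution of the truncation $\bigoplus_{d\geq a} H^0(\cE^*(d))$, dualize, and then identify the cohomology. The difference lies only in how the cohomology is computed. The paper sheafifies the dual complex, observes that the resulting complex of bundles is a monad for $\cE$ (exact except for $H^0(\widetilde E)\cong\cE$), and then reads off the module-level cohomology by breaking $\widetilde E$ into short exact sequences and chasing the long exact cohomology sequences; the truncation in the top cohomology arises because $E^m$ contributes only in degrees $\geq -a-m$. You instead compute $H^k(E)=\mathrm{Ext}^k_S(N',S)$ directly via local duality and Serre duality, and you make explicit the bound $\mathrm{pd}_S N'\leq m$ via Auslander--Buchsbaum, which the paper leaves implicit. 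Your route is a bit more algebraic and self-contained, avoiding the appeal to monads and to \cite{EFS}; the paper's route is shorter to state but leans on the reader knowing how to run the sheaf-cohomology chase. Both are standard and essentially equivalent here.
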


\begin{proof} Since  $\cE^*$ is $a$-regular, $\sum_{d\ge a} H^0\cE^*(d)$ has a linear resolution
$$0\leftarrow \sum_{d\ge a} H^0\cE^*(d)\leftarrow S(-a)^{b_0} \leftarrow \cdots \leftarrow S(-a-m)^{b_m} \leftarrow 0.$$ 
The dual  complex is the desired complex $E$. Its sheafification $\tilde E$ has homology $H^0(\tilde E) \cong \cE$ and is exact otherwise. The statement about the homology follows by chasing sheaf cohomology through the sheafified  complex. See Section 8 of 
Eisenbud-Fl\o ystad-Schreyer \cite{EFS} for more about linear monads. 
\end{proof}

Proposition \ref{linear monad} applies to vector bundles with supernatural cohomology:

\begin{proposition} \label{dual regularity}
Let $\cE$ on $\PP^{m}$ be a vector bundle with supernatural cohomology with zeroes $z_1 >  \cdots >z_{m}$ of the Hilbert polynomial. Let $a\ge -z_{m}-m$  be an integer. Then $\cE^*$ is $a$-regular,
and the complex constructed from $\cE$ as in Proposition \ref{linear monad} has supernatural
cohomology.
\end{proposition}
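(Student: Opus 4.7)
The plan is to treat the two assertions separately. The first is a cohomological vanishing statement that can be reduced to Theorem~\ref{supernatural} via Serre duality, and the second is a structural claim about the complex produced by Proposition~\ref{linear monad}.

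For $a$-regularity of $\cE^*$, I need $H^i(\cE^*(a-i))=0$ for all $i\ge 1$. Since $\cE$ is locally free on $\PP^m$, Serre duality gives
$$
H^i(\cE^*(a-i))^{\,*} \;\cong\; H^{m-i}\bigl(\cE(i-a-m-1)\bigr).
$$
Writing $j=m-i$, I must verify that $H^{j}\bigl(\cE(-j-a-1)\bigr)=0$ for $j=0,\ldots,m-1$. By Theorem~\ref{supernatural}, this group vanishes unless the twist $t:=-j-a-1$ satisfies $z_{j+1}<t<z_j$, so it suffices to check $t\le z_{j+1}$, that is, $a\ge -z_{j+1}-j-1$. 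Because the $z_i$ are strictly decreasing integers, $z_{j+1}-z_{j+2}\ge 1$, and so the function $g(j):=-z_{j+1}-j-1$ is weakly increasing on $\{0,\ldots,m-1\}$ and attains its maximum at $j=m-1$, where $g(m-1)=-z_m-m$. The hypothesis $a\ge -z_m-m$ therefore gives the required inequality for every $j$, which establishes $a$-regularity of $\cE^*$.

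For the second assertion, I would feed this $a$-regularity into Proposition~\ref{linear monad} to obtain the complex $E$ with $H^iE=\bigoplus_d H^i\cE(d)$ for $i<m$ and $H^mE=\bigoplus_{d\ge -a-m} H^m\cE(d)$. Reading off the cohomology table of $E$, the entry in row $i$ and column $d$ is $h^i\cE(d-i)$ for $i<m$, while in row $m$ it is $h^m\cE(d-m)$ when $d\ge -a$ and $0$ otherwise. Since $\cE$ has natural cohomology, at each degree at most one $h^i\cE(\cdot)$ is nonzero; setting some entries to zero cannot destroy this property, so $E$ has natural cohomology. The nonzero pattern is then controlled by the same integer zeros $z_1>\cdots>z_m$ of $\chi(\cE(t))$, augmented in row $m$ by the truncation bound $-a-m$, which is precisely the staircase data describing a supernatural cohomology table for $E$.

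The main obstacle is making precise what ``supernatural cohomology'' should mean for the complex $E$ and showing that the truncation of $H^mE$ below degree $-a-m$ still produces a supernatural-shaped table. The key observation is that the truncation is perfectly matched to the last root $z_m$: when $a=-z_m-m$ exactly, the truncation annihilates all of $H^mE$ (since $h^m\cE(t)$ is supported on $t<z_m$), and for $a>-z_m-m$ it leaves only a finite block of nonzero entries in row $m$ of length $a+z_m+m$. In either case the table fits the supernatural staircase determined by $z$ and $a$, and the integrality of the resulting root sequence is inherited directly from the hypothesis on $\cE$. Verifying this fit carefully, boundary case included, is the step that takes the most care.
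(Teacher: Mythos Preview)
Your argument for $a$-regularity is correct and is essentially identical to the paper's: both apply Serre duality to reduce $H^i(\cE^*(a-i))=0$ to the vanishing of $H^{m-i}\cE(i-a-m-1)$, and then invoke Theorem~\ref{supernatural} together with the numerics forced by $a\ge -z_m-m$ and the strict decrease of the $z_i$. Your substitution $j=m-i$ and monotonicity check on $g(j)=-z_{j+1}-j-1$ is just a re-parametrization of the paper's chain of inequalities $i-a-m-1\le i-1+z_{m}\le z_{m-i+1}$.

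The one point of difference is the second assertion. The paper's proof does not address it at all: it proves $a$-regularity and stops, evidently regarding the ``supernatural cohomology'' of the monad $E$ as an immediate reading of Proposition~\ref{linear monad}, since that proposition already identifies $H^iE$ for $i<m$ with the intermediate cohomology of $\cE$ and truncates only $H^mE$. Your discussion of how the truncation at degree $-a-m$ interacts with the last root $z_m$ (and in particular that $H^mE=0$ when $a=-z_m-m$) is a correct and useful elaboration, but it goes beyond what the paper actually writes down. You are right that the phrase ``supernatural cohomology'' for a complex is being used loosely; the paper never formalizes it, and the only way the proposition is used later (in the proof of Theorem~\ref{main}) is through the $a$-regularity and the explicit description of $H^jE$ from Proposition~\ref{linear monad}.
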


\begin{proof} By Theorem \ref{supernatural} the dual bundle $\cE^*$ is $a$-regular. Indeed,  $$h^i \cE^*(a-i)=h^{m-i} \cE(i-a-m-1)=0 \hbox{ for } i\ge 1, $$because $i-a-m-1\le i-1+z_{m-1}\le z_{m-i}$.
\end{proof}

\section{Proof of the \bs Conjectures}
\label{main proof}

Both Theorems \ref{existence0} and \ref{main0}  reduce at once to the case
$c=n$ of modules of finite length. To simplify notation we will work in that context.

\begin{theorem}\label{main} The cone defined by the upper facet equations
contains the Betti tables of 
minimal free resolutions of all finitely generated graded $S$-modules.
\end{theorem}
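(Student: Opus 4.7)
The plan is to realize each upper facet equation as a positive multiple of a modified bilinear functional $\langle -, E\rangle_{c,\tau}$ for a well-chosen linear complex $E$, and then invoke Theorem \ref{modified} for non-negativity. Since Proposition \ref{outer facets} shows that exterior facets of type~1 and type~2 have supporting hyperplane equations of the form $\beta_{i,j}=0$, which are trivially non-negative on all Betti tables, only the facets $\facet(f,\tau)$ of type~3 need treatment.

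First I would encode the facet data as a root sequence. Given $\facet(f,\tau)$ with surrounding degree sequences $f^-<f<f^+$, I form the strictly decreasing sequence $z=(z_1>\cdots>z_{n-1})$ consisting of the negatives of the $n-1$ entries $\{f_i:i\neq\tau,\tau+1\}$. This is a valid root sequence on $\PP^{n-1}$, and by Theorem \ref{existence0a} there exists a supernatural vector bundle $\cE$ on $\PP^{n-1}$ with root sequence $z$. For a sufficiently negative integer $a$ (chosen compatibly with $c:=f_\tau$ so that the dual bundle $\cE^*$ satisfies the regularity hypothesis of Proposition \ref{dual regularity}), I apply Proposition \ref{linear monad} to obtain a linear free complex
\[
E:\ 0\to E^0\to E^1\to\cdots\to E^{n-1}\to 0,\quad E^k=S(a+k)^{b_k},
\]
whose cohomology agrees with that of $\cE$ in a controlled range.

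Next I would identify the functional $\langle -,E\rangle_{c,\tau}$ with the upper equation of $\facet(f,\tau)$ up to a positive scalar. By Proposition \ref{upper equation} the upper equation is characterized, modulo the Herzog--K\"uhl equations, by vanishing on the pure Betti table $\beta^{f^-}$ and on every pure Betti table $\beta^d$ with $d\geq f^+$ in the termwise order. I verify that $\langle \beta^d,E\rangle_{c,\tau}$ enjoys exactly these vanishings: for $d\geq f^+$, Theorem \ref{positivity}(2) applies because the shifts in $E$ and the root sequence of $\cE$ were designed so that the required regularity inequalities hold (this is where the exclusion of $f_\tau,f_{\tau+1}$ from $-z$ is essential); for $\beta^{f^-}$ the vanishing comes from the defining truncations at column $\tau$ and row $c$ in the definition of the modified form. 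Since the upper equation vanishes on a spanning collection that also annihilates the modified functional, and a direct evaluation shows $\langle\beta^f,E\rangle_{c,\tau}>0$, the two functionals are positive scalar multiples.

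Finally, Theorem \ref{modified} gives $\langle F,E\rangle_{c,\tau}\geq 0$ for every minimal free resolution $F$, so the upper equation of $\facet(f,\tau)$ is non-negative on $\beta(F)$, as required.

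The main obstacle is the matching step: while Theorem \ref{modified} provides the positivity package and Theorem \ref{existence0a} provides the bundles, the bookkeeping that links the combinatorial data $(f,\tau)$ to the analytic data $(\cE,a,c,\tau)$ is delicate. In particular, verifying the vanishing $\langle\beta^d,E\rangle_{c,\tau}=0$ for pure degree sequences $d\geq f^+$ requires sharp control of the regularities of the truncated resolutions and of the finite length cohomology modules $H^j(E)$, so that Theorem \ref{positivity}(2) can be invoked with the correct strict inequalities. The diagonal symmetry of Corollary \ref{diagonals} together with the Hilbert-polynomial vanishing that underlies the Herzog--K\"uhl equations is what ultimately makes the two functionals agree on the full codimension of the facet.
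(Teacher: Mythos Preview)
Your overall architecture matches the paper's: form the root sequence $z=(-f_i)_{i\neq\tau,\tau+1}$, take a supernatural bundle $\cE$ with that root sequence, build a linear monad $E$ via Proposition~\ref{linear monad}, set $c=f_\tau$, and invoke Theorem~\ref{modified}. But the two vanishing mechanisms are swapped, and neither works in the direction you assign it.

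For $d\geq f^+$ you cannot use Theorem~\ref{positivity}(2). On that side the modified functional $\langle-,E\rangle_{c,\tau}$ differs from $\langle-,E\rangle$, so part~(2), which is a statement about the unmodified pairing, does not apply to the quantity you need. Even if it did, the inequality $0>\reg M+\reg E^0$ fails once $d_n$ is large, since $\reg M=d_n-n$ grows without bound while $\reg E^0$ is fixed. What actually kills this side is that the \emph{coefficients} of $\beta_{i,j}$ for $j\geq f^+_i$ are identically zero in $\langle-,E\rangle_{c,\tau}$. This requires the specific choice $a=f_n-n+1$ (the minimal value allowed by Proposition~\ref{dual regularity}, not a ``sufficiently negative'' one---the inequality there goes the other way), together with the supernatural zero pattern and the column-$\tau,\tau+1$ modification.

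Conversely, for $\beta^{f^-}$ the ``defining truncations'' do nothing. The nonzero entries of $\beta^{f^-}$ in columns $\tau$ and $\tau+1$ sit exactly at degrees $f^-_\tau=c$ and $f^-_{\tau+1}=c+1$, which are retained by the cutoffs in the definition of $\langle-,E\rangle_{c,\tau}$; so on this side $\langle F,E\rangle_{c,\tau}=\langle F,E\rangle$. The vanishing for all $d\leq f^-$ then comes from Theorem~\ref{positivity}(2), whose hypotheses are verified from $\reg M\leq f_n-n$, $\reg E^0=-a=n-1-f_n$, and $\reg H^j(E)=z_j-1$ (computed from Theorem~\ref{supernatural}). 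Once you reverse the two arguments, your plan is exactly the paper's proof.
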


\begin{proof}
Suppose $f=(f_0<f_1<\cdots < f_n)$ is a degree sequence and $\tau$
is an integer with $0 \le \tau \le n-1$ such that $f_{\tau+1}=f_\tau+2$,
so that $\facet(f,\tau)$ is defined.
By Theorem \ref{supernaturals exist} there is a vector bundle $\cE$ on $\PP^{n-1}$
with supernatural cohomology whose Hilbert polynomial $\chi(\cE(d))$ has roots
$$
(z_1>\cdots >z_{n-1})=(-f_0>\cdots>-f_{\tau-1}>-f_{\tau+2} >\cdots>-f_n).
$$
Let 
$$
E:\quad 0\to E^0\to\cdots\to E^{m}\to 0
$$
be the linear complex made from $\cE$ with $a=f_n-n+1$ as in Proposition \ref{linear monad}, so that
$E^0=S(a)^{b_0}$. 
Set $c=f_\tau$. 
Theorem \ref{modified} proves that  $\langle -, E\rangle_{c,\tau}$ is non-negative
on the cone of Betti tables of all minimal resolutions. Thus it suffices
to prove that $ \facet(f,\tau)$ is defined by the vanishing
of this functional.

By construction, the coefficient of $\beta_{i,j}$ for $j \ge f^+_i$ in $\langle -, E\rangle_{c,\tau}$ is
zero, so $ \langle F,E \rangle_{c,\tau}=0$ for all resolutions $F$ 
such that $\beta_{i,j}(F)=0$ for all $i,j$ with  $j < f^+_i$.
On the other hand we have 
$$\langle F,E\rangle_{c,\tau}=\langle F,E \rangle$$
for a resolution $F$ of a module $M$ such that $\beta_{i,j}(M)=0$ for
all $i,j$ with $j > f^-_i$. 
For these modules we check the vanishing criterion of Theorem \ref{positivity}.
We have 
$$
\reg M \le f_n-n\;  \hbox{ and } \; \reg E_0=-f_n+n-1
$$ 
so the first inequality in Theorem \ref{positivity} is satisfied. 
For the remaining
inequality, note that if $j>0$ then
$$
\reg F_{j-1} \le f_{j-1} \; \hbox{ and }\reg H^j(E)=z_{j-1}-1.
$$
Thus $\langle -, E\rangle_{c,\tau}$ vanishes on $\facet(f,\tau)$.
Finally we observe that $\langle F, E\rangle \not=0$ for the pure complex $F$ with degree sequence $f$.
We conclude that  $\langle -, E\rangle_{c,\tau}$ is the supporting equation of this facet. 
\end{proof}

\begin{proof}[Proof of Theorem \ref{main0}]
As discussed in \S \ref{BS explanation} the 
 \bs fan is simplicial, so every Betti table in it is in a unique minimal
cone defined by a chain of degree sequences.
The result follows from  Theorem \ref{main}.
\end{proof}

We return to the Example \ref{Ex1} of 
 $\facet((-4,-3, 0,2,4,6,7,9), 3)$. The equation of the supporting hyperplane,
 computed before, is given by the dot
 product with the matrix
\scriptsize
$$
\begin{matrix}
\vdots&\vdots&\vdots&\vdots&\vdots& \vdots &\vdots& \vdots &\cr
1755&-385&0& 0&66& -70 & 0&100&\cr
385&0& 0&-66& 70 & 0&-100&175&\cr
0^*&0^*&66& -70& 0&100&-175&189&\cr
0&0&70&0& -100 & 175 &-189&140&\cr
0&0&0^*&100& -175 & 189&-140&60&\cr
0&0& 0&175&-189&140& -60&0&\;  \cr
0&0& 0 &0^+&0^+&60 &0&0 & \;0\hbox{-th row} \cr
0&0& 0 &0& 0 &  0^*&0^*&44&\cr
0&0& 0 &0& 0 &  0&0&0^*&\cr
0&0& 0 &0& 0 &  0&0&0&\cr
\vdots&\vdots&\vdots&\vdots& \vdots &\vdots& \vdots& \vdots \cr
\end{matrix}
$$
\normalsize
Now we can interpret all the  entries of this table: Suppose there is vector bundle $\cE$ on $\PP^6$ with natural cohomology for $(z_1 > \cdots >z_6)=(4,3,0,-6,-7,-9)$. The coefficients of the facet equation, up to a common factor, are the dimensions of cohomology groups of $\cE$. The rank of the bundle $\cE$ is at least $15$ by Proposition \ref{supernatural rank bound}, and if such bundle of rank $15$ exists, then the integral factor above is $1$. In this case the monad $E$ for $\cE$ would have shape
$$
0 \to S(3)^{455} \to S(4)^{1260}\to S(5)^{1480}\to S(6)^{924} \to S(7)^{308} \to S(8)^{44} \to 0
$$
with homology  $H^i(E)$ only in (cohomological) degrees $i=0,2,3,5$. 
Indeed the Hilbert series $H_E(t)$ of such a complex would be
{\small
\begin{eqnarray*} H_E(t)& =&
(-44t^{-8}+308t^{-7}-924t^{-6}+1480t^{-5}-1260t^{-4}+455t^{-3})/(1-t)^7 \cr
&=& -(44t^{-8})-(60t^{-5}+140t^{-4}+189t^{-3}+175t^{-2}+100t^{-1})+\cr
&&(70t^1+66t^2)+(385t^5+1755t^6 + \cdots). 
\end{eqnarray*}
}
We do not know whether there is such a bundle with rank
as small as $15$. 
The bundle provided by Theorem \ref{supernaturals exist} has  rank ${6\choose {2,1,2,1}}=180=12*15$. The complex $E$ that corresponds to it thus has Betti numbers
$12$ times larger than the hypothetical monad indicated above.
The homogeneous bundle  of Theorem \ref{supernaturals in char 0 exist}   has rank $3^5*5*7*11*13=1216215$ in this case. Hence in characteristic 0, any sufficiently large multiple of $3*15=45$  occurs as a rank for the root sequence $(4,3,0,-6,-7,-9)$. 
In Remark \ref{supernatural ranks} we conjectured that any sufficiently high multiple of $15$  occurs as a rank.

\section{The Cone of Cohomology Tables}
\label{vector bundles}
Throughout this section we set $m=n-1$ and consider coherent sheaves and vector
bundles on $\PP^m=\PP^m_K$.

Let $\cE$  be a vector bundle on $\PP^m$. 
By Serre vanishing and Serre duality only the 0-th and m-th row
of the cohomology table of $\cE$ can have infinitely many non-zero entries.
We define the \emph{cohomology range} 
\begin{align*}
 r(\cE)&=(r_1(\cE)\ge \cdots \ge  r_{m+1}(\cE)=-\infty )\cr
R(\cE) &=(\infty= R_0(\cE)\ge \cdots \ge  R_{m}(\cE)) 
\end{align*}
  of  $\cE$ by the formulas
\begin{align*}
r_i(\cE) &= \sup\{d \mid H^j(\cE(d-j)=0 \hbox{ for all  } j < i \},\cr
R_i(\cE) &= \inf\{d \mid H^j(\cE(d-j)=0 \hbox{ for all  } j\ge  i \}.
\end{align*}
For example, $R_1(\cE)$ is the Castelnuovo-Mumford regularity of $\cE$.
It follows from the definition that 
$h^i \cE(d-i)=0$ except possibly when  $r_{i+1}(\cE) < d < R_i(\cE)$, whence
the name ``cohomology range''.

\begin{proposition}\label{range}
If $\cE$ is a supernatural vector bundle on $\PP^m$ then
$$ 
\max\{i \mid h^i\cE(d-i)\not=0 \} =\min\{i \mid h^i\cE(d-i)\not=0 \} 
$$
and
$$r_i(\cE)=R_{i}(\cE)= z_i(\cE)+i.$$
\end{proposition}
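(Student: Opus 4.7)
The plan is to derive both assertions directly from Theorem~\ref{supernatural}, which determines the cohomology of a supernatural sheaf explicitly in terms of its root sequence. Since a supernatural vector bundle on $\PP^m$ has dimension $s=m$, Theorem~\ref{supernatural} says $h^j\cE(e)\neq 0$ if and only if $z_j>e>z_{j+1}$, where we use the conventions $z_0=\infty$ and $z_{m+1}=-\infty$.

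Substituting $e=d-j$, the entry of the cohomology table in row $j$ and column $d$, namely $h^j\cE(d-j)$, is nonzero iff $d$ lies in the open interval $(z_{j+1}+j,\,z_j+j)$. Setting $a_j:=z_j+j$ (with $a_0=\infty$, $a_{m+1}=-\infty$), the hypothesis that $z_1>\cdots>z_m$ are strictly decreasing integers implies $a_0\geq a_1\geq\cdots\geq a_{m+1}$. Consequently the half-open integer intervals $[a_{j+1},a_j)$ for $j=0,\ldots,m$ are pairwise disjoint and their union is all of $\mathbb{Z}$ (some may be empty when two consecutive roots differ by $1$).

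This immediately yields the first assertion of Proposition~\ref{range}: for each integer $d$, at most one index $j$ satisfies $d\in[a_{j+1},a_j)$, so at most one entry in column $d$ is nonzero. Combining with Proposition~\ref{decreasing}, which guarantees that every column of a nonzero coherent sheaf contains some nonzero entry, forces exactly one nonzero entry per column, and hence the equality of $\max$ and $\min$.

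For the second assertion, the telescoping identities
$$
\bigcup_{j<i}[a_{j+1},a_j)=[a_i,\infty)\qquad\text{and}\qquad\bigcup_{j\geq i}[a_{j+1},a_j)=(-\infty,a_i)
$$
show that all rows $j<i$ vanish in column $d$ iff $d<a_i$, while all rows $j\geq i$ vanish in column $d$ iff $d\geq a_i$. Plugging these characterizations into the definitions of $r_i$ and $R_i$ from the start of \S\ref{vector bundles} yields $r_i=R_i=a_i=z_i+i$. I do not expect a serious obstacle: the argument is essentially combinatorial bookkeeping on the intervals $[a_{j+1},a_j)$, and the only minor care needed is in handling possibly empty intervals (when consecutive roots are adjacent) and the boundary conventions $a_0=\infty$, $a_{m+1}=-\infty$ consistently.
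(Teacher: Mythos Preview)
Your argument is correct and is precisely the paper's approach: the paper's proof reads in full ``Immediate from Theorem~\ref{supernatural},'' and you have simply spelled out that immediacy via the intervals $[a_{j+1},a_j)$. One quibble worth flagging: from the paper's definition $r_i=\sup\{d:h^j\cE(d-j)=0\text{ for all }j<i\}$, your own characterization ``all rows $j<i$ vanish iff $d<a_i$'' literally gives $r_i=\sup\{d\in\mathbb Z:d<a_i\}=a_i-1$ rather than $a_i$; this off-by-one is an inconsistency already in the paper's stated formula for $r_i$ versus its Proposition, not a defect in your method.
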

\begin{proof} Immediate from Theorem \ref{supernatural}.
\end{proof}

We now define the \emph{fan of supernatural cohomology tables} inside
$\CC$. We work with root sequences $z$ of a given length $m=n-1$.
First, we define the \emph{supernatural cohomology table of the root sequence 
$z=(z_1>\cdots>z_m)$} by the formula
$$ \gamma_{i,d}(z)=
\begin{cases}
\frac{1}{m!} \prod_{j=0}^{m-1} |d-z_j|  & \hbox{for } z_{i}> d > z_{i+1} \cr
 0 & \hbox {otherwise.}\cr
 \end{cases}
 $$ 
 We
partially order these supernatural cohomology tables 
by the termwise partial order on their root sequences
$$
z=(z_1>\cdots >z_{m}) \ge z' =(z_1'>\cdots >z_{m}') \Leftrightarrow  z_i \ge z_i' \hbox{ for all } i.
$$ 
This coincides with the other obvious way of defining a partial order:

\begin{proposition} If $\cE$ and $\cE'$ are supernatural bundles then
$z(\cE)\ge z(\cE')$ if and only if
$$
\max\{i \mid h^i\cE(d-i)\not=0 \}  \ge \max\{i \mid h^i\cE'(d-i)\not=0 \}  \hbox{ for all } d.
$$
\end{proposition}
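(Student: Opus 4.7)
The plan is to reduce both sides of the equivalence to a comparison of the integer-valued functions
\[
d \mapsto M_d(\cE) := \max\{i \mid h^i\cE(d-i) \neq 0\}.
\]
For supernatural $\cE$, Proposition \ref{range} shows this maximum equals the corresponding minimum, and Proposition \ref{decreasing} guarantees it is attained, so $M_d(\cE)$ is the unique cohomological index at which $\cE$ has cohomology in column $d$. Under this identification the hypothesis of the ``only if'' direction reads simply $M_d(\cE) \geq M_d(\cE')$ for every $d$.

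The key translation, read off from Theorem \ref{supernatural} with the conventions $z_0 = \infty$, $z_{m+1} = -\infty$, is that $M_d(\cE) = i$ iff $z_i(\cE) > d - i > z_{i+1}(\cE)$, equivalently $R_{i+1}(\cE) \leq d < R_i(\cE)$, where $R_k(\cE) := z_k(\cE) + k$ as in Proposition \ref{range}. Since the $R_k(\cE)$ are weakly decreasing in $k$, this telescopes to
\[
M_d(\cE) \geq k \iff d < R_k(\cE).
\]
The termwise order $z(\cE) \geq z(\cE')$ is plainly equivalent to $R_k(\cE) \geq R_k(\cE')$ for all $k$, so the proposition reduces to showing that this latter condition is equivalent to $M_d(\cE) \geq M_d(\cE')$ for all $d$.

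The forward implication is then a one-line chase: if $R_k(\cE) \geq R_k(\cE')$ for every $k$, then $M_d(\cE') \geq k$ forces $d < R_k(\cE') \leq R_k(\cE)$, hence $M_d(\cE) \geq k$; applying this with $k = M_d(\cE')$ gives $M_d(\cE) \geq M_d(\cE')$. For the converse, if $z_k(\cE) < z_k(\cE')$ for some $k$ then $R_k(\cE) \leq R_k(\cE') - 1$ because the roots are integers, so $d := R_k(\cE)$ satisfies $d < R_k(\cE')$ yet $d \not< R_k(\cE)$, giving $M_d(\cE') \geq k > M_d(\cE)$ and contradicting the hypothesis.

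The only point demanding care is the boundary bookkeeping: the characterization $M_d(\cE) = i \iff R_{i+1}(\cE) \leq d < R_i(\cE)$ mixes a strict and a non-strict inequality, and the clean form $M_d(\cE) \geq k \iff d < R_k(\cE)$ must be verified for every integer $d$, including those equal to some $R_k(\cE)$. Both this verification and the construction of the separating witness $d = R_k(\cE)$ in the converse rest on integrality of the root sequence, which is the one substantive input of the argument beyond the two cited propositions and Theorem \ref{supernatural}.
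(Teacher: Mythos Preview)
Your proof is correct and follows essentially the same approach as the paper, which simply observes that for supernatural bundles the max and min coincide (Proposition~\ref{range}) and then declares that ``the result follows.'' You have carefully fleshed out the bookkeeping with the equivalence $M_d(\cE)\geq k \iff d<R_k(\cE)$ that the paper leaves implicit, but the underlying idea is the same.
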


\begin{proof} If $\cE$ is supernatural then  $\max\{i \mid h^i\cE(d-i)\not=0 \} =\min\{i \mid h^i\cE(d-i)\not=0 \}$ by Proposition \ref{range}. The result follows.
\end{proof}

In a bounded range of root sequences, say $z$ with $a \ge z_1 > \cdots >z_{m} \ge b$ all maximal chains 
of root sequences have the same  length. Hence the order complex
of chains ($=$ totally ordered subsets) is equidimensional. The simplices defined by a chain of supernatural cohomology tables intersect only in common faces. This follows by the same argument as for pure resolutions of Cohen-Macaulay modules given in \bs \cite{BS}, Proposition 2.9.
Thus the supernatural cohomology tables of root sequences in a bounded range define a geometric realization of the order complex of such root sequences, and this
defines the \emph{fan of cohomology tables}. Theorem \ref {main1} which we will prove below, implies the convexity of this fan.

Much of Proposition \ref{outer facets} remains true for the fan of cohomology tables. However, the outer facets that are not defined by a single $\gamma_{i,d}$, correspond this time to a sequence $z^+> z > z^-$, which differ only in one position, say $i $, and $z^+_i-1=z_i=z^-_i+1$. A consequence of Theorem \ref{cohomology facet} below is that the equation of the outer facet coincides for all maximal chains containing  the sequence $z^+>z>z^-$.

Let $F$ be a minimal free resolution of a graded $S$-module of finite length.
In the following we set 
$$
 \langle F, \cE\rangle:=  
 \langle F, E\rangle
$$
where $E=E_a$ is the dual of the free resolution of 
$\oplus_{d\ge a} H^0\cE^*(d)$ for $a \gg 0$ as in Proposition \ref{linear monad}.
In case of a pure resolution $F$ with degree sequence $f$ the formula simplifies to 
$$\langle F, \cE\rangle=  \sum_{j \le i}
 (-1)^{i-j} \beta_{i,f_i} h^j(\cE(-f_i))
$$

\begin{example}\label{7.3} The non-zero cohomology groups for the
three root sequences $z^+=( 3,1,-4),z=(3,0,-4)$ and $z^-=(3,-1,-4)$ are indicated in the following diagram
\scriptsize 
$$
\begin{matrix}
\cdots \cr
\cdots \cr
\cdots \cr
\cdots \cr
\end{matrix}\quad
\begin{matrix}
 *&*&*&\cdot &\cdot & \cdot  & \cdot  &\cdot  &\cdot &\cdot \cr
 \cdot &\cdot & \cdot &*&*&+&+&\cdot  & \cdot   &\cdot \cr
 \cdot &\cdot &\cdot &\cdot & \cdot  &-&-&*&\cdot &\cdot   \cr
\cdot &\cdot &\cdot &\cdot & \cdot  &\cdot & \cdot  &\cdot & *& *   \cr
\end{matrix}\quad
\begin{matrix}
\cdots \cr
\cdots \cr
\cdots \cr
\cdots \cr
\end{matrix}\;
$$
\normalsize
We can find the equation for the supporting hyperplane of a facet using an algorithm that
is completely analogous to that of Proposition \ref{upper equation}.
Applying it in our example, we obtain the linear form whose coefficients
are indicated in the following table:
\scriptsize 
$$
\begin{matrix}
\cdots \cr
\cdots \cr
\cdots \cr
\cdots \cr
\end{matrix}\quad
\begin{matrix}
 0^*&0^*&0^*&0&0& 0 & 0 &0 &0&0\cr
 0&0& 2&0^*&0^*&0^+&0^+&0 & 0  &0\cr
 0&-2&0&0& 35 &-70&42&0^*&0&0  \cr
2&0&0&-35& 70 &-42& 0 &5& 0^*& 0^*  \cr
\end{matrix}\quad
\begin{matrix}
\cdots \cr
\cdots \cr
\cdots \cr
\cdots \cr
\end{matrix}\;
$$
\normalsize
The shape of the facet equation leads us  to consider a pure complex
$$ F: 0 \to S^2(-4) \to S^{35}(-1) \to S^{70} \to S^{42}(1) \to S^5(3) \to 0,$$
corresponding to the degree sequence $f$ that is the negative of the union of the root sequences
$z^+, z$ and $z^-$.

The coefficients of $\langle F, -\rangle$ are
\scriptsize 
$$
\begin{matrix}
\cdots \cr
\cdots \cr
\cdots \cr
\cdots \cr
\end{matrix}\quad
\begin{matrix}
 0^*&0^*&0^*&-2&0& 0 &35  &0&0\cr
 0&0& 2&0^*&0^*&-35&70&0 & 0  &0\cr
 0&-2&0&0& 35 &-70&42&0^*&0&0  \cr
2&0&0&-35& 70 &-42& 0 &5& 0^*& 0^*  \cr
\end{matrix}\quad
\begin{matrix}
\cdots \cr
\cdots \cr
\cdots \cr
\cdots \cr
\end{matrix}\;
$$
\normalsize
These coefficients coincide with those of the facet equation except  that
in the facet equation some terms in the top two rows  
have been replaced by zeros. This is exactly the effect of the modification
described in \S \ref{pos2}.
\end{example}

\begin{theorem}\label{cohomology facet}
Suppose that the three root sequences $z^+> z >z^-$ differ only in the 
$i$-th spot, where they have $z^+_i-1= z_i = z_i^- +1$. 
Let $f$ be the degree sequence
that is the negative of the union of these three root sequences, and let $\tau$ be such that
$f_\tau = -z_i,$ the middle value. Let $ c=f_{\tau -1}=f_\tau -1$, the smaller value.
Let $F$ be a pure resolution corresponding to the degree sequence $f$. 
The functional $ \langle F, \cE\rangle_{c,\tau}$ is positive on the supernatural
bundle $\cE$ with root sequence $z$, and vanishes on all supernatural bundles
$\cE$ with root sequence $\le z^-$ or $\ge z^+$. 
\end{theorem}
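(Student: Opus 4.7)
The plan is to use the pure structure of $F$ to reduce $\langle F,\cE\rangle_{c,\tau}$ to a short sum, and then evaluate it in each of the three regimes by direct inspection combined with the vanishing clause of Theorem~\ref{positivity}. First, the modification from $\langle\beta,\gamma\rangle$ to $\langle\beta,\gamma\rangle_{c,\tau}$ only touches indices $(i,j,k)$ with $j=\tau$ and $i\in\{\tau,\tau+1\}$; for pure $F$ with degree sequence $f$, the only supported $k$ in these slots are $k=f_\tau>c$ and $k=f_{\tau+1}>c+1$, both dropped. Hence
\[
\langle F,\cE\rangle_{c,\tau}=\langle F,\cE\rangle-\beta_{\tau,f_\tau}\,h^\tau\cE(-f_\tau)+\beta_{\tau+1,f_{\tau+1}}\,h^\tau\cE(-f_{\tau+1}).
\]
Equivalently, writing $\langle F,\cE\rangle=\sum_{i'}(-1)^{i'-j(i')}\beta_{i',f_{i'}}\,h^{j(i')}\cE(-f_{i'})$, where $j(i')$ is the unique cohomological index (when defined) with $h^{j(i')}\cE(-f_{i'})\neq 0$ by Theorem~\ref{supernatural}, the modified pairing is the restriction of this sum to pairs $(i',j(i'))$ satisfying $j(i')<\tau$ or $j(i')\le i'-2$. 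The arithmetic input throughout is that $f_{\tau-1}=-z_i-1$, $f_\tau=-z_i$, $f_{\tau+1}=-z_i+1$ with $\tau=i$, forced by the gaps $z_{i-1}\ge z_i+2$ and $z_{i+1}\le z_i-2$ required of valid $z^\pm$.

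For the positivity at $\cE$ with root sequence $z$, all $-f_{i'}$ except $-f_{\tau-1}=z_i+1$ and $-f_{\tau+1}=z_i-1$ are roots of $z$, so only $i'\in\{\tau-1,\tau+1\}$ contribute. Theorem~\ref{supernatural} gives $j(\tau-1)=\tau-1$ (which satisfies $j<\tau$, so the term is retained) and $j(\tau+1)=\tau$ (which satisfies neither $\tau<\tau$ nor $\tau\le(\tau+1)-2$, so excluded). The sum collapses to $\langle F,\cE\rangle_{c,\tau}=\beta_{\tau-1,f_{\tau-1}}\,h^{\tau-1}\cE(z_i+1)>0$.

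For $\cE$ supernatural with root sequence $z'\le z^-$, the condition $z'_\tau\le z_i-1$ forces both $h^\tau\cE(z_i)$ and $h^\tau\cE(z_i-1)$ to vanish (the relevant open intervals are empty), so the two correction terms drop out and $\langle F,\cE\rangle_{c,\tau}=\langle F,\cE\rangle$. I would then invoke Theorem~\ref{positivity}(2) applied to the monad $E$ for $\cE$ from Proposition~\ref{linear monad}, with $a$ large enough that $\reg M+\reg E^0<0$. The remaining hypothesis $\reg F_{j-1}+\reg H^j(E)=f_{j-1}+z'_j-1<0$ for $j\ge 1$ reduces to a one-line check in each of the ranges $j\le i-1$, $j=i$, $j=i+1$, $j=i+2$, $j\ge i+3$, using $z'_k\le z_k$ for $k\ne i$, $z'_i\le z_i-1$, and the gap estimates on $z$.

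For $\cE$ supernatural with root sequence $z'\ge z^+$, the correction terms are not obviously zero, so I would argue directly on the retained region. For each $i'$, the assumption $z'_k\ge z^+_k$ forces one of two outcomes: either $-f_{i'}$ equals some $z'_k$, so $h^{j(i')}\cE(-f_{i'})=0$; or $j(i')$ exceeds the retained-region threshold for that $i'$ (namely $i'$ for $i'<\tau$, $\tau-1$ for $i'\in\{\tau,\tau+1\}$, and $i'-2$ for $i'\ge\tau+2$), either by $j(i')>i'$ (placing the pair outside the original sum entirely) or by $j(i')$ lying in the excluded band. The mechanism is that whenever $z'_\ell$ is strictly above $z^+_\ell$ it contributes an extra root above $-f_{i'}$ and pushes $j(i')$ past the threshold, while whenever $z'_\ell$ equals $z^+_\ell$, $-f_{i'}$ becomes a root of $z'$. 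I expect the main obstacle to be the bookkeeping in this last step: lining up the four index ranges, each with an equality/strict-inequality split, against the piecewise threshold—though once arranged each case is immediate.
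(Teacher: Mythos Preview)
Your overall strategy matches the paper's: for root sequences $\le z^-$ you reduce to the unmodified pairing and invoke Theorem~\ref{positivity}(2); for $\ge z^+$ you argue that each retained term vanishes; and at $z$ you isolate the single surviving diagonal contribution. The positivity computation at $z$ and the casework for $z'\ge z^+$ are correct.

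However, your opening claim that ``the modification only touches indices $(i,j,k)$ with $j=\tau$ and $i\in\{\tau,\tau+1\}$'' is false, and the displayed formula
\[
\langle F,\cE\rangle_{c,\tau}=\langle F,\cE\rangle-\beta_{\tau,f_\tau}\,h^\tau\cE(-f_\tau)+\beta_{\tau+1,f_{\tau+1}}\,h^\tau\cE(-f_{\tau+1})
\]
is wrong. Looking at the definition (or the region table $U,V,V',W$), the first sum in $\langle-,-\rangle_{c,\tau}$ drops \emph{every} pair $(i',j)$ with $j\ge\tau$ and $j\in\{i',i'-1\}$: that is, the entire diagonal $(i',i')$ for $i'\ge\tau$ and the entire subdiagonal $(i',i'-1)$ for $i'\ge\tau+1$. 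Your observation that $f_\tau>c$ and $f_{\tau+1}>c+1$ correctly shows the second sum is empty for this pure $F$, so the right description is the paper's
\[
\langle F,\cE\rangle_{c,\tau}=\sum_{j=i<\tau}+\sum_{j=i-1<\tau}+\sum_{j\le i-2}.
\]
This error does not affect your treatment of the $z$ or $z'\ge z^+$ cases, since there you argue directly from the retained region. But it breaks your $z'\le z^-$ step: you only verify that the two terms $h^\tau\cE(z_i)$ and $h^\tau\cE(z_i-1)$ vanish, whereas to conclude $\langle F,\cE\rangle_{c,\tau}=\langle F,\cE\rangle$ you must also check that all the further dropped terms $h^{i'}\cE(-f_{i'})$ and $h^{i'-1}\cE(-f_{i'})$ vanish for every $i'\ge\tau+1$. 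They do --- for $i'\ge\tau+2$ one has $-f_{i'}=z_{i'-1}$, and $z'_{i'}\le z_{i'}<z_{i'-1}$ and $z'_{i'-1}\le z_{i'-1}$ kill both; the case $i'=\tau+1$ follows from $z'_{i+1}\le z_{i+1}\le z_i-2$ --- but this needs to be said. Once that gap is closed, the remainder of your argument is correct and coincides with the paper's proof with the bookkeeping made explicit.
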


\begin{proof}
The functional coming from a pure resolution $F$ with degree sequence 
$f$ is
\begin{eqnarray*}
 \langle F, \cE\rangle_{c,\tau}&= & \sum_{j=i< \tau}  (-1)^{i-j} \beta_{i,f_i} h^j\cE(-f_i) \cr
 &+& \sum_{j=i-1< \tau}  (-1)^{i-j} \beta_{i,f_i} h^j\cE(-f_i) \cr
&+& \sum_{j \le i-2} (-1)^{i-j} \beta_{i,f_i} h^j\cE(-f_i))\cr
\end{eqnarray*}
It vanishes trivially
on all supernatural bundles with degree sequence $\ge z^+$ and is positive on the one with root sequence $z$ because the coefficients of the diagonal terms $i=j$ are positive. 
If $\cE$ is a supernatural bundle with root sequence $\leq z^-$ then 
$
\langle F, \cE\rangle_{c,\tau}= \langle F, \cE\rangle.
$
To prove that this is zero, we check the conditions of Theorem \ref{positivity}:

The module $M=\coker(F_1 \to F_0)$ has regularity $f_n-n$. The module $E^0$ has regularity
$\reg E^0=z_{m}(\cE)+m=w_{m}+m \le z_{m}^- +m \le z_{m}^++m=-f_n+m$. Thus
$$
\reg M + \reg E^0\le f_n-n-f_n+m=-1 <0.
$$ 
Moreover for $j>0$, we have $\reg H^j(E)=w_{j}-1 \le z^-_{j}-1\le -f_{j-1}-1$ and $\reg F_{j-1}=f_{j-1}$,
hence
$$
\reg F_{j-1} + \reg H^j(E) \le-1<0.   
$$ 
\vskip -25pt
\end{proof}

{\it Proof} of Theorem \ref{main1}.
By Theorem \ref{cohomology facet} the 
equation of the non-trivial outer facets are given by functionals
$\langle F, -\rangle_{c,\tau}$
for suitable pure free resolutions $F$ and integers $c,\tau$.
By Theorem \ref{modified}, this functional
is non-negative on the monad $E=E_a$ obtained from the free resolution 
of $\oplus_{d\ge a} H^0 \cE^*(d)$ with $a \gg 0$
for any vector bundle $\cE$. 
\qed
\medskip

{\it Proof} of Corollary \ref{slope}. By restriction to a hyperplane
and induction we see that the Hilbert polynomial of $\cE$ has the form
$$
p_\cE(t) = (\rank \cE){t+m \choose m}+\deg \cE {t+m-1 \choose m-1}+O(t^{m-2}).
$$
Thus
$$
\frac{p_\cE(t)}{\rank \cE}=\frac{t^m}{m!}
+\biggl( \frac{1}{m!}{m+1\choose 2}+\frac{\mu(\cE)}{(m-1)!}\biggr) t^{m-1}+O(t^{m-2}),
$$
and in particular
$$
\mu(\cE) = -\frac{1}{m}\sum_{i=1}^{m} (z_i+i),
$$
where $z_1,\dots,z_{m}$ are the roots of $p_\cE$.

The cohomology table of $\cE$ is a rational combination of cohomology tables of vector bundles $\cE_k$ with supernatural cohomology. Hence the normalized Hilbert polynomial of $\cE$
may be written in the form
$$
\frac{p_\cE(t)}{\rank \cE} = \sum_k \lambda_k  \frac{p_{\cE_k}(t)}{\rank \cE_k}
$$
for some $\lambda_k>0$. Comparing coefficients of $t^m$ and $t^{m-1}$
we obtain
%
%
$\sum_k \lambda _k =1$ 
and
$\sum_k \lambda_k\mu(\cE_k) = \mu(\cE).$

On the other hand by Theorem \ref{supernatural}, the zeroes of the $\cE_k$ satisfy
$$
r_i(\cE) \le r_i(\cE_k) = z_i(\cE_k)+i = R_{i}(\cE_k) \le R_{i}(\cE),
$$
because otherwise $\cE_k$ would contribute to a cohomology group outside the range of $\cE$.
From the expression for $\mu(\cE_k)$ in terms of $z_i(\cE_k)$ we get
$$
 \frac{1}{m}\sum_{i=1}^{m}  r_i(\cE)\le -\mu(\cE_k) \le \frac{1}{m}\sum_{i=1}^{m} R_i(\cE)
 $$
Multiplying this inequality by $\lambda_k$ and summing over $k$ we obtain
the assertion of the Corollary.
\qed

\bigskip

\vbox{\noindent Author Addresses:\par
\smallskip
\noindent{David Eisenbud}\par
\noindent{Department of Mathematics, University of California, Berkeley,
Berkeley CA 94720}\par
\noindent{eisenbud@math.berkeley.edu}\par
\smallskip
\noindent{Frank-Olaf Schreyer}\par
\noindent{Mathematik und Informatik, Universit\"at des Saarlandes, Campus E2 4, 
D-66123 Saarbr\"ucken, Germany}\par
\noindent{schreyer@math.uni-sb.de}\par
}

\end{document}